\newcommand{\rrvert}{\vert}
\newcommand{\llvert}{\vert}
\def\xrightarrow{\rightarrow}
\newcommand{\iint}{\int\!\!\!\int}
\newcommand{\eqref}[1]{(\ref{#1})}
\newtheorem{thmm}{Theorem}[section]
\newtheorem{lemma}[thmm]{Lemma}
\newtheorem{corollary}[thmm]{Corollary}
\newtheorem{prop}[thmm]{Proposition}
\newcommand{\R}{\mathbb{R}}
\newcommand{\N}{\mathbb{N}}
\newcommand{\E}{\mathbb{E}}
\newcommand{\p}{\mathbb{P}}
\newcommand{\calB}{\mathcal{B}}
\newcommand{\calC}{\mathcal{C}}
\newcommand{\calF}{\mathcal{F}}
\newcommand{\calH}{\mathcal{H}}
\newcommand{\calL}{\mathcal{L}}
\newcommand{\calM}{\mathcal{M}}
\newcommand{\1}{\mathbh{1}} 
\newcommand{\ssup}[1] {{({#1})}}
\newcommand{\sse}[1] {{[{#1}]}} 
\newcommand{\eps}{\varepsilon}
\newcommand{\ra}{\rightarrow}
\newcommand{\da}{\downarrow}
\newcommand{\tem}{\mathrm{tem}}
\newcommand{\rap}{\mathrm{rap}}
\renewcommand{\rho}{\varrho}
\newcommand{\blue}{\mathrm{\mathtt{ blue} }}
\newcommand{\red}{\mathrm{\mathtt{ red }}}
\newcommand{\dd}{ d} 
\begin{document}
\begin{frontmatter}

\title{The scaling limit of the interface of the continuous-space
symbiotic branching model\thanksref{T1}}
\runtitle{The scaling limit of the SBM}
\thankstext{T1}{Supported by the DFG research Grant No. BL 1105/2-1 and
the DFG Priority Programme 1590 ``Probabilistic Structures in
Evolution,'' Grant No. BL 1105/4-1.}

\begin{aug}
\author[A]{\fnms{Jochen}~\snm{Blath}\ead[label=e1]{blath@math.tu-berlin.de}\thanksref{m1}},
\author[A]{\fnms{Matthias}~\snm{Hammer}\ead[label=e2]{hammer@math.tu-berlin.de}\thanksref{m1}}
\and
\author[B]{\fnms{Marcel}~\snm{Ortgiese}\corref{}\ead[label=e3]{marcel.ortgiese@uni-muenster.de}\thanksref{m2}}
\runauthor{J. Blath, M. Hammer and M. Ortgiese}
\address[A]{J. Blath\\
M. Hammer\\
Institut f\"ur Mathematik\\
Technische Universit\"at Berlin\\
Stra\ss e des 17. Juni 136\\
10623 Berlin\\
Germany\\
\printead{e1}\\
\phantom{E-mail:\ }\printead*{e2}}
\address[B]{M. Ortgiese\\
Institut f\"ur Mathematische Statistik\\
Westf\"alische Wilhelms-Universit\"at M\"unster\\
Einsteinstra\ss{}e 62\\
48149 M\"unster\\
Germany\\
\printead{e3}}
\affiliation{Technische Universit\"at Berlin\thanksmark{m1} and
Westf\"alische~Wilhelms-Universit\"at M\"unster\thanksmark{m2}}

\end{aug}

%
\received{\smonth{12} \syear{2013}}
%
\revised{\smonth{11} \syear{2014}}

%
\begin{abstract}
The continuous-space symbiotic branching model describes
the evolution of two interacting populations that can reproduce locally
only in the
simultaneous presence
of each other.
If started with complementary Heaviside initial conditions, the
interface where both populations coexist remains
compact.
Together with a diffusive scaling property, this suggests the presence
of an interesting
scaling limit.
Indeed, in the present paper, we show weak convergence of the diffusively
rescaled populations as measure-valued processes in the Skorokhod,
respectively the Meyer--Zheng,
topology (for suitable parameter ranges).
The limit can be characterized as the unique solution to a martingale
problem and
satisfies a ``separation of types'' property.
This provides an important step toward
an understanding of the scaling limit for the interface.
As a corollary, we obtain an estimate
on the moments of the width of an approximate interface.
\end{abstract}

%
\begin{keyword}[class=AMS]
\kwd[Primary ]{60K35}
\kwd[; secondary ]{60J80}
\kwd{60H15}
\end{keyword}

\begin{keyword}
\kwd{Symbiotic branching model}
\kwd{mutually catalytic branching}
\kwd{stepping stone model}
\kwd{rescaled interface}
\kwd{moment duality}
\kwd{Meyer--Zheng topology}
\end{keyword}
%
\end{frontmatter}

\section{Introduction}\label{sec1}

\subsection{The symbiotic branching model and its interface}

The symbiotic branching model of Etheridge and Fleischmann \cite{EF04} is
a spatial stochastic model of two interacting populations
described by the nonnegative solutions of
the stochastic partial differential equations
%
\begin{equation}
\label{eqn:spde} {\operatorname{cSBM}(\varrho,
\gamma)}_{u_0, v_0}\dvtx%
\cases{\displaystyle \frac{\partial}{\partial t}
u_t(x) = \frac{\Delta}{2} u_t(x) + \sqrt{ \gamma
u_t(x) v_t(x)} \dot{W}^\ssup{1}_t(x),
\vspace*{2pt}
\cr
\displaystyle\frac{\partial}{\partial t}v_t(x) = \frac{\Delta}{2}
v_t(x) + \sqrt{ \gamma u_t(x) v_t(x)}
\dot{W}^\ssup{2}_t(x),
}
\end{equation}
with suitable nonnegative initial conditions $u_0(x) \ge0, v_0(x) \ge
0, x \in\R$.
Here, $\gamma> 0$ is the branching rate, and $ (\dot{W}^\ssup{1},
\dot{W}^\ssup{2})$ is a pair of correlated standard Gaussian white
noises on $\mathbb{R}_+ \times\mathbb{R}$ with correlation $\varrho
\in[-1,1]$, that is, for $t_1,t_2 \geq0$,
%
\begin{equation}
\label{correlation} \mathbb{E} \bigl[ W^\ssup{i}_{t_1}(A_1)W^\ssup{j}_{t_2}(A_2)
\bigr] = %
\cases{ (t_1\wedge t_2)
\ell(A_1\cap A_2), &\quad $i=j$,
\cr
\varrho(t_1
\wedge t_2) \ell(A_1\cap A_2), &\quad $i\neq j$,}
\end{equation}
where $\ell$ denotes the Lebesgue measure and $A_1,A_2$ are Borel sets.
Solutions of this model have been considered rigorously in the
framework of the corresponding martingale problem in Theorem~4 of \cite
{EF04}, which states that, under natural conditions on the initial
conditions $u_0(\cdot), v_0(\cdot)$,
a solution exists for all $\varrho\in[-1, 1]$. Further, the
martingale problem is well-posed for all $\varrho\in[-1,1)$, which
implies the strong Markov property except in the boundary case $\varrho=1$.
The model interpolates between several well-known examples of spatial
population models.
Indeed, for $\varrho=-1$ and $u_0 = 1- v_0$,
the system reduces to the continuous-space \emph{stepping stone model},
discussed, for example, by Tribe in \cite{T95}. For $\varrho=0$, the
system is the so-called \emph{mutually catalytic model} of Dawson and
Perkins \cite{DP98}, and for $\rho= 1$ and
$u_0= v_0$, an instance of the \emph{parabolic Anderson model}; see,
for example, \cite{MuellerSupport91}.

Natural questions about such (systems of) SPDEs are related to their
long-term behavior,
for example, the limiting shape of the interface for suitable initial
conditions.
For us, of particular interest are
``complementary Heaviside initial conditions,'' that is,
\[
u_0(x) = \mathbf{ 1}_{\R^-}(x) \quad\mbox{and}\quad
v_0(x) = \mathbf{ 1}_{\R
^+}(x), \qquad x \in\R.
\]

\begin{defn}\label{def:ifc}
The interface at time $t$ of a solution $(u_t,v_t)_{t\ge0}$ of the
symbiotic branching model $\operatorname{cSBM}(\varrho,\gamma)_{u_0, v_0}$
with $\varrho\in[-1,1]$, $\gamma>0$ is defined as
\[
\operatorname{ Ifc}_t = \operatorname{cl} \bigl\{x\in\R\dvtx u_t(x)
v_t(x) > 0 \bigr\},
\]
where $\operatorname{cl}(A)$ denotes the closure of the set $A$ in $\R$.
\end{defn}

The main question addressed by Etheridge and Fleischmann \cite{EF04} is
whether for the above initial conditions, the ``compact interface
property'' holds, that is, whether the interface is compact at each
time almost surely. This is answered affirmatively in their Theorem~6,
together with the assertion that the interface propagates with at most
linear speed, that is,
there exists a constant $c = c(\gamma)$ such that
for each $\varrho\in[-1,1]$, there is a (almost-surely) finite random
time $T_0$ such that, almost surely, for all $T \ge T_0$,
%
\begin{equation}
\label{eq:linspeed} \bigcup_{t \le T} \operatorname{Ifc}_t
\subseteq [-cT, cT ].
\end{equation}
However, due to the scaling property of the symbiotic branching model
[see \eqref{scaling property} below], one might expect that the
fluctuations of the position of the interface should be of order
$t^{1/2}$. Indeed, Blath, D\"oring and Etheridge \cite{BDE11},
Theorem~2.11, strengthen the linear propagation bounds~\eqref{eq:linspeed}
for a (rather small) parameter range:

\begin{thmm}[(\cite{BDE11})]
\label{thmm:wavespeed}
There exists $\rho_0>-1$ such that the following holds: Suppose
$(u_t,v_t)_{t\ge0}$ is a solution to $\operatorname{cSBM}(\varrho,\gamma
)_{1_{\R^-}, 1_{\R^+}}$ with $-1 < \varrho< \varrho_0$. Then there is
a constant $C(\gamma, \varrho)>0$ and a finite random time $T_0$ such
that almost surely
\[
\bigcup_{t\leq T} \operatorname{Ifc}_t
\subseteq
 \bigl[-C\sqrt{T\log(T)},C\sqrt{T\log(T)} \bigr],
\]
for all $T>T_0$.
\end{thmm}

The restriction to $\varrho<\varrho_0$ seems artificial and comes from
the technique of the proof. Although the value of $\varrho_0\approx
{-0.9958}$ is {rather} close to $-1$, the result is remarkable, since
it shows that {sub-linear speed of propagation} is not restricted to
situations in which solutions are uniformly bounded as, for instance,
for $\varrho=-1$. The proof is based on the ``dyadic grid technique''
in~\cite{T95}
together with improved bounds on the
moments of the symbiotic branching model (see the ``critical curve'' in
Theorem~\ref{thmm:mc} below),
circumventing the lack of uniform boundedness of the population sizes.

The symbiotic branching model exhibits the following fundamental \emph
{scaling property} (see Lemma~8 of \cite{EF04}):
If $(u_t, v_t)_{t\ge0}$ is a solution to $\operatorname{cSBM}(\rho, \gamma
)_{u_0, v_0}$,
then
%
\begin{equation}
\label{scaling property} \bigl(u^\ssup{K}_t(x), v^\ssup{K}_t(x)
\bigr):= \bigl(u_{K^2t}(K x), v_{K^2t}(K x) \bigr), \qquad x \in\R, K
>0
\end{equation}
is a solution to $\operatorname{cSBM}(\rho, K \gamma)_{u^\ssup{K}_0,
v^\ssup
{K}_0}$ with initial states $(u^\ssup{K}_0, v^\ssup{K}_0)$
transformed accordingly.
Note that complementary Heaviside initial conditions $(u_0,v_0) = ({\1
}_{\R^-}, {\1}_{\R^+})$ are invariant under this rescaling. Thus in
this case letting $K\to\infty$ in~\eqref{scaling property} is
equivalent to increasing the branching rate $\gamma\to\infty$.

In light of the
scaling property \eqref{scaling property}, one might hope
that (at least for a suitable range of parameters) a diffusive
rescaling could lead to an interesting scaling limit.
In fact, the program of letting the branching rate tend to infinity has
been carried out for the \emph{discrete space} version of~(\ref{eqn:spde}).
For the mutually catalytic model (the case $\rho= 0$), Klenke and
Mytnik construct in a series of papers~\cite{KM10a,KM11b,KM11c}
a nontrivial limiting process for $\gamma\to\infty$ (on the lattice)
and study its
long-term properties.
This limit is called the ``infinite rate mutually catalytic branching process.''
Moreover, Klenke and Oeler \cite{KO10} give a Trotter-type approximation
and \emph{conjecture} that, under suitable assumptions, a nontrivial
interface for the limiting process exists; see page~485, before Corollary~1.2.
Recently, analogous results have been derived by D\"oring and Mytnik in
the case
$\varrho\in(-1, 1)$ in~\cite{DM11a,DM12}.

Returning to the continuous-space set-up, for $\rho=-1$ (the stepping
stone model) Tribe \cite{T95} proves a ``functional limit theorem'': For
a pair of (continuous) functions $(u,v)$,
define
%
\begin{equation}
\label{defn:R_L} R(u,v) := \sup\bigl\{x \dvtx u(x) > 0\bigr\},\qquad
 L(u,v) = \inf\bigl\{x
\dvtx v(x) > 0\bigr\}.
\end{equation}
Note that for a solution $(u_t, v_t)_{t\ge0}$ of the symbiotic
branching model, the interface at time $t$ is contained in the interval
$[L(u_t, v_t), R(u_t, v_t)]$.
It is proved in~\cite{T95} for $\varrho=-1$ and for continuous initial
conditions $u_0 = 1- v_0$ which
satisfy $-\infty< L(u_0,v_0) \leq R(u_0,v_0) < \infty$ that under
Brownian rescaling, the motion of the position of the right endpoint of
the interface
$t \mapsto\frac{1}{n}R(u_{n^2 t}, 1-u_{n^2 t}), t\ge0$, converges
to a Brownian motion as $n \rightarrow\infty$.

The above results suggest the existence of an interesting diffusive
scaling limit for the continuous-space symbiotic branching model (and
its interface) for $\rho>-1$. This is
the starting point of our investigation. However, compared to the case
$\rho=-1$, the situation is more involved here: For example, the total
mass of the solution is not necessarily bounded, and in particular,
moments of the solution may diverge as $t\to\infty$, depending on
$\rho$.
For instance, second moments diverge for $\rho\geq0$.
In order to state this result, which was
obtained in \cite{BDE11}, we define the \emph{critical curve} $p\dvtx
(-1,1) \to(1, \infty)$ of the symbiotic branching model by
%
\begin{equation}
p(\varrho)=\frac{\pi}{ \arccos(-\varrho)} ,\label{criticalcurve}
\end{equation}
and denote its inverse by $\varrho(p) = - \cos(\frac{\pi}{p})$ (for
$p>1$). This curve separates the upper right quadrant into
two areas: below the critical curve, where moments remain bounded, and
above the critical curve, where moments
increase to infinity as $t \to\infty$:

\begin{thmm}[({\cite{BDE11}, Theorem~2.5})]\label{thmm:mc}
Suppose $(u_t,v_t)_{t\ge0}$ is a solution to the symbiotic branching
model with initial conditions $u_0=v_0\equiv1$. Let $\varrho\in(-1,1)$
and $\gamma>0$. Then, for every $x \in\R$,
\[
\varrho<\varrho(p) \quad\mbox{iff}\quad \E_{1,1} \bigl[u_t(x)^p
\bigr] \qquad\mbox {is bounded uniformly in all }t\ge0.
\]
In particular, if $\varrho<\varrho(p)$, there exists a constant
$C(\varrho)$ so that, uniformly for all $x \in\R$ and $t \ge0$,
\[
\E_{1,1} \bigl[u_t(x)^p \bigr] \le C(\varrho),\qquad
t \ge0.
\]
\end{thmm}

\begin{rem}\label{rem:starting_point}
(i) Of course, due to symmetry, the same result holds for the $v$ population.
The existence of a finite bound which is independent of $x$ follows
from the fact that the
system is translation invariant under the $(\mathbf{1},\mathbf{1})$
starting condition.

(ii)
In particular, for $\rho< \rho(4)= - \frac{1}{\sqrt{2}}$ and any
$x_1,\ldots, x_4$ we have by the generalized
H\"older inequality that
\[
\E_{1,1}\bigl[ u_t(x_1) u_t(x_2)
v_t(x_3) v_t(x_4) \bigr] \leq\max
_{i = 1, \ldots, 4} \E_{1,1}\bigl[ u_t(x_i)^4
\bigr] \leq C(\varrho),
\]
and similarly if some of the $v$'s are replaced by $u$ (and vice versa).
\end{rem}

The main tool of our approach is the use of several \emph{dual
processes} for the symbiotic branching model.
For the case $\rho=-1$ (heat equation with Wright--Fisher noise), Tribe
\cite{T95} uses the duality with coalescing Brownian motions.
In our case, we have to use instead a duality due to \cite{EF04} with a
system of colored Brownian particles with an exponential correction
term, involving collision local times.
Moreover, we will rely on an exponential \emph{self-duality} for
uniqueness. These dualities will be explained in detail below.

\subsection{Main results and open problems}\label{ssn:main}

We define the measure-valued processes
%
\begin{equation}
\label{defn:mu_nu} \mu_t^\ssup{n} (dx) := u_{n^2
t}(nx)
\,dx ,\qquad  \nu _t^\ssup {n} (dx) := v_{n^2 t}(nx) \,dx ,
\end{equation}
obtained by taking the diffusively rescaled solutions of $\operatorname
{cSBM}(\rho,\gamma)$ as densities.
We consider the pair $(\mu^\ssup{n}_t, \nu^\ssup{n}_t)_{t \geq0}$ as
random elements of $\calC_{[0,\infty)}(\calM_\tem^2)$, the space of
continuous
processes taking values in the space of (pairs of) tempered measures
endowed with the Skorokhod topology.
Loosely speaking, $\calM_\tem$ contains all the measures
whose integral against any nonnegative function that is decaying
exponentially fast at $\pm\infty$
is finite.
We recall the precise definition of $\calM_\tem$ and all other
necessary spaces of functions
and measures in
Appendix~\ref{appendix0}.
Our first main result reads as follows:

\begin{thmm}
\label{thmm:main1}
Assume $\varrho< \varrho(4) = -\frac{1}{\sqrt{2}}$. Let $(u_t,
v_t)_{t\ge0}$ be a solution to $\operatorname{cSBM}(\rho,\gamma)_{u_0,v_0}$
with complementary Heaviside initial conditions $(u_0, v_0) = (\1_{\R
^-}, \1_{\R^+})$. Then the
processes $(\mu_t^\ssup{n}, \nu_t^\ssup{n})_{t \geq0}$ converge weakly
in $\calC_{[0,\infty)}( \calM_\tem^2)$
to a limit $(\mu_t,\nu_t)_{t\ge0}$ which has the following properties:
\begin{itemize}
\item Absolute continuity: For each fixed $t>0$, $\mu_t$ and $\nu_t$
are absolutely continuous w.r.t. the Lebesgue measure $\ell$, $\p$-a.s.,
\[
\mu_t(dx)=\mu_t(x) \,dx, \qquad\nu_t(dx)=
\nu_t(x) \,dx,\qquad \p\mbox {-a.s.}\setcounter{footnote}{1}\footnote {For an absolutely continuous
measure, we will usually use the same symbol to denote the measure and its
density.}
\]
\item Separation of types: For each fixed $t>0$
the (absolutely continuous) measures $\mu_t$ and $\nu_t$ are mutually
singular: We have
%
\begin{equation}
\label{singularity1} \mu_t(\cdot) \nu_t(\cdot)=0, \qquad\p\otimes\ell
\mbox{-a.s.}
\end{equation}
\end{itemize}
\end{thmm}
%

\begin{rem}\label{re:2011-1}
%
%
(a) \emph{Identification of the limit}. For $\rho= -1$,
Tribe~\cite{T95} shows that the process $(\mu_t^\ssup{n}, \nu
_t^\ssup
{n})_{t \geq0}$
converges weakly to
\[
(\1_{\{x\le B_t\}} \,dx, \1_{\{x\ge B_t\}} \,dx)_{t \geq0} ,
\]
for $(B_t)_{t \geq0}$ a standard\vspace*{1pt} Brownian motion.
In our case, however, that is, for $\rho\in(-1,-\frac{1}{\sqrt{2}})$,
one can show that the limit $(\mu_t,\nu_t)_{t \geq0}$
cannot be of the form
\[
(\1_{\{x\le I_t\}} \,dx,\1_{\{x\ge I_t\}} \,dx)_{t \geq0}
\]
for a semimartingale $(I_t)_{t \geq0}$; see Remark~\ref
{rem:not_a_limit} below for more details.
Moreover, we remark that the limiting process in Theorem~\ref
{thmm:main1} is also not trivial, that is,
nondeterministic: If it were, then by the Green function
representation of the limit (see Corollary~\ref{cor:Green function
representation} below) it would have to be given by
\[
(\mu_t,\nu_t)=(S_tu_0,S_tv_0),
\]
which, however, violates the ``separation of types'' condition~\eqref
{singularity1}.

(b) \emph{Restrictions on $\rho$ and initial
conditions.}
Note that the restrictions on the range of parameters only comes from
our proof of
tightness for the rescaled solutions.
The decisive step is an estimate on the second moment of the integral
$\int u_t(x) v_t(x) \,dx$ that is uniform in time.
It is here that both assumptions $ \rho<-\frac{1}{\sqrt{2}}$ and
complementary
Heaviside initial conditions are essential.
The restriction on $\rho$ comes from the fact that
the second moment of the product $u_t v_t$ is really a
fourth moment, and we recall from Theorem~\ref{thmm:mc} that only
for $\rho< \rho(4) = -\frac{1}{\sqrt{2}}$ fourth moments (at a
single location)
remain bounded in time.
In fact, our technique would work in principle if we could control
$p$th moments for $p > 2$,
but our integer-moment particle system duality in combination with the
Burkholder--Davis--Gundy inequality
requires mixed fourth moments; see Lemma~\ref{tst_fn_tight}.

Similarly, the restriction to Heaviside initial conditions is due to
the technique of proof.
Only in this case can we control the expression obtained via the
particle dual.
Roughly speaking, we need the ``simple'' shape of the initial conditions
to be able to reduce the (spatial) integrals
to pointwise estimates that can be controlled via Theorem~\ref{thmm:mc}.

More generally, it
seems conceivable (although probably technically much more involved)
that one can deal with initial conditions of the type
$u_{0,n} = \1_{(-\infty, an]} + \1_{[bn, cn]}, v_{0,n} =
\1_{[an,bn]} + \1_{[cn, \infty)}$ (and its obvious generalizations
to several blocks; cf. also~\cite{T95} for $\rho= -1$).
It seems difficult to
go beyond this class, and
it is clear that the ``overlap'' of the support of the initial
conditions needs to vanish sufficiently quickly
(for $n \ra\infty$) for the moment bound to hold.

Note that we can relax both assumptions to any $\rho< 0$ and
general initial conditions if we allow a weaker topology than the
Skorokhod topology on $\calC_{[0,\infty)}$;
see Theorem~\ref{thmm:MPinf} below.
%
\end{rem}

Unfortunately, we do not yet have a fully explicit representation of
the limiting process of Theorem~\ref{thmm:main1} as in \cite{T95}. We
can, however, characterize it as the unique solution to a certain
martingale problem.
For the (standard) notation we again refer the reader to Appendix~\ref
{appendix0}.

\begin{defn}[{[Martingale problem $(\mathbf{MP})_{\mu_0,\nu_0}^\rho
$]}]\label{defn:MP}
Fix $\rho\in[-1,1]$ and (possibly random) initial conditions $(\mu
_0,\nu
_0)\in\calM_\tem^2$ (resp., $\calM_\rap^2$).
A continuous $\calM_\tem^2$-valued (resp., $\calM_\rap^2$-valued)
stochastic process $(\mu_t, \nu_t)_{t\ge0}$
is called a solution to the martingale problem $(\mathbf{MP})_{\mu
_0,\nu_0}^\rho$ if
there exists a continuous $\calM_\tem$-valued (resp., $\calM_\rap$-valued)
process $(\Lambda_t)_{t\ge0}$
such that
for each test function $\phi\in\calC_{\rap}^{(2)}$ (resp., $\phi
\in
\calC^{(2)}_{\tem}$),
the process $(M(\phi),N(\phi))$ defined by
%
\begin{eqnarray}
\label{MP1} M(\phi)_t&:=&\langle\mu_t,\phi\rangle-
\langle\mu_0, \phi\rangle - \int_0^t
\biggl\langle\mu_s, \frac{1}{2}\Delta\phi\biggr\rangle \,ds,
\nonumber
\\[-8pt]
\\[-8pt]
\nonumber
N(\phi)_t&:=&\langle\nu_t,\phi\rangle- \langle
\nu_0, \phi\rangle - \int_0^t
\biggl\langle\nu_s, \frac{1}{2}\Delta\phi\biggr\rangle \,ds
\end{eqnarray}
is a pair of continuous square-integrable martingales null at zero with
covariance structure
%
\begin{eqnarray}
\label{Cov1} \bigl[ M(\phi),M(\phi)\bigr]_t & =& \bigl[ N(\phi),N(
\phi)\bigr]_t = \bigl\langle\Lambda_t, \phi ^2
\bigr\rangle ,
\nonumber
\\[-8pt]
\\[-8pt]
\nonumber
{}\bigl[ M(\phi),N(\phi)\bigr]_t &=&\rho\bigl\langle
\Lambda_t, \phi^2\bigr\rangle.
\end{eqnarray}
\end{defn}

Observe that if there exists a process $\Lambda$ controlling the
correlation as in Definition~\ref{defn:MP}, then
it is uniquely determined by $(\mu,\nu)$ via the martingales in
\eqref{MP1}.
Obviously, $\Lambda_0=0$ and $\Lambda$ has to be an increasing process
in the sense that $ (\langle\Lambda_t,\phi\rangle
)_{t\ge0}$
is increasing for all $\phi\ge0$. Also, condition \eqref{Cov1} implies
that the martingale measure $M$ (and similarly $N$) is orthogonal in
the sense of~\cite{Wal86}; that is, for all test functions $\phi,\psi$
with $\phi\psi\equiv0$ we have $[M(\phi),M(\psi)]_t=\langle
\Lambda
_t,\phi\psi\rangle=0$.

It is important to note that in the definition of the martingale
problem $(\mathbf{MP})_{\mu_0,\nu_0}^\rho$,
we do not specify the measure-valued process $\Lambda$ more explicitly.
As a consequence, it is not surprising that the martingale problem
$(\mathbf{MP})_{\mu_0,\nu_0}^\rho$
is not well posed without any further conditions.

Indeed, assume that the initial conditions are absolutely continuous
with densities $(u_0,v_0)\in(\calB_\tem^+)^2$, and let $\gamma>0$
be arbitrary.
If we denote by $(u_t^\sse{\gamma},v_t^\sse{\gamma})$ the symbiotic
branching process with finite branching rate $\gamma$,
then $(u_t^\sse{\gamma},v_t^\sse{\gamma})$, considered as
measure-valued processes, is a solution to $(\mathbf{MP})_{\mu_0,\nu
_0}^\rho$ with $\Lambda
=\Lambda
^\sse{\gamma}$ given by
%
\begin{equation}
\label{defn:Lambda} \Lambda_t^\sse{\gamma}(dx):=\gamma\int
_0^t\,ds u_s^\sse{\gamma
}(x)v_s^\sse {\gamma}(x) \,dx,
\end{equation}
for \emph{every} $\gamma>0$; see Theorem~4 in \cite{EF04}.

Certainly uniqueness in the martingale problem $(\mathbf{MP})_{\mu
_0,\nu_0}^\rho$ can thus be
achieved by \emph{prescribing} an explicit correlation structure as
in~\eqref{defn:Lambda}.
However, in order to characterize the limiting object in Theorem~\ref
{thmm:main1}, we
proceed differently, and
we only require (a slightly stronger version of) the ``separation of
types'' property \eqref{singularity1}
for uniqueness.

Our uniqueness argument relies on a variant of the self-duality \`a la
Mytnik~\cite{Mytnik98}.
Also, instead of requiring that the dual process lives in the space of
continuous measure-valued
processes, we can relax this condition, and we will construct the dual
for a large
class of initial conditions by
approximations in the (less restrictive) Meyer--Zheng topology.

Recall the self-duality function
employed in \cite{EF04}:
let $\rho\in(-1,1)$ and if either $(\mu,\nu,\phi,\psi)\in\calM
_\tem
^2\times\calB_\rap^2$ or $(\mu,\nu,\phi,\psi)\in\calM_\rap
^2\times\calB
_\tem^2$, denote
%
\begin{equation}
\label{self-duality product} \langle\!\langle\mu, \nu, \phi,
 \psi\rangle\!\rangle_\rho:= -\sqrt{1-\rho} \langle\mu+ \nu, \phi+ \psi\rangle + i
\sqrt{1+\rho} \langle\mu- \nu, \phi- \psi\rangle.
\end{equation}
Then we define the \emph{self-duality function $F$} as
%
\begin{equation}
\label{self-duality function} F(\mu,\nu,\phi,\psi) := \exp\langle\!\langle\mu, \nu, \phi,
\psi \rangle\!\rangle_\rho.
\end{equation}
With this notation, we define another (somewhat weaker) martingale
problem, which is tailored for an application of the self-duality.

\begin{defn}[{[Martingale problem $(\mathbf{MP'})_{\mu_0,\nu_0}^\rho
$]}]\label{defn:MP'}
Fix $\rho\in(-1,1)$ and (possibly random) initial conditions $(\mu
_0,\nu
_0)\in\calM_\tem^2$ (resp., $\calM_\rap^2$).
A c\`adl\`ag $\calM_\tem^2$-valued (resp., $\calM_\rap^2$-valued)
stochastic process $(\mu_t, \nu_t)_{t\ge0}$
is called a solution to the martingale problem $(\mathbf{MP'})_{\mu
_0,\nu_0}^\rho$
if the following holds: There exists an increasing c\`adl\`ag $\calM
_\tem$-valued (resp., $\calM_\rap$-valued) process $(\Lambda
_t)_{t\ge
0}$ with $\Lambda_0=0$ and
%
\begin{equation}
\label{finiteness Lambda 1} \E_{\mu_0,\nu_0} \bigl[\Lambda_t(dx) \bigr]\in
\calM_\tem\qquad \bigl(\mbox {resp., } \E _{\mu_0,\nu_0} \bigl[
\Lambda_t(dx) \bigr]\in\calM_\rap\bigr)
\end{equation}
for all $t>0$,
such that for all test functions $\phi,\psi\in (\calC_{\rap
}^{(2)} )^+$ [resp., $\phi,\psi\in (\calC^{(2)}_{\tem} )^+$],
the process
%
\begin{eqnarray}
\label{MP9} &&F(\mu_t, \nu_t,\phi,\psi) - F(
\mu_0,\nu_0,\phi,\psi)\nonumber
\\
&&\qquad{}- \frac
{1}{2}\int_0^t F(
\mu_s, \nu_s,\phi,\psi) \langle\!\langle
\mu_s, \nu_s, \Delta\phi, \Delta\psi\rangle\!
\rangle_\rho \,ds
\\
&&\qquad{} - 4\bigl(1-\rho^2\bigr)\int_{[0,t]\times\R} F(
\mu_s,\nu_s,\phi,\psi) \phi(x)\psi (x) \Lambda(ds,dx)\nonumber
\end{eqnarray}
is a martingale.
\end{defn}

In \eqref{MP9} we have interpreted the right-continuous and increasing
process $t\mapsto\Lambda_t(dx)$ as a (locally finite) measure
$\Lambda
(ds,dx)$ on $\R^+\times\R$, via
\[
\Lambda\bigl([0,t]\times B\bigr):=\Lambda_t(B).
\]

\begin{rem}
Note that in contrast to Definition~\ref{defn:MP}, we do not require a
solution of $(\mathbf{MP'})_{\mu_0,\nu_0}^\rho$ to be continuous,\vspace*{4pt} but
only c\`adl\`ag. Hence we can construct solutions to $(\mathbf
{MP'})_{\mu_0,\nu_0}^\rho$ via approximations in the weaker
Meyer--Zheng ``pseudo-path'' topology (see \cite{MZ84} and \cite
{Kurtz91} and cf.~Appendix~\ref{appendix0}), which allows us to work
with second instead of fourth moment bounds and more general initial
conditions.

We do not include the boundary cases $\rho= \pm1$, since either the
real or imaginary
part in~\eqref{self-duality product} vanishes, and we cannot use the
resulting $F$ for our
approach, showing uniqueness via self-duality.

As in Definition~\ref{defn:MP}, the martingale problem of Definition~\ref{defn:MP'} is not well posed: In fact, by Corollary~\ref{cor MPinf}
any solution to $(\mathbf{MP})_{\mu_0,\nu_0}^\rho$ is also a
solution to $(\mathbf{MP'})_{\mu
_0,\nu
_0}^\rho$; this is a simple application of It\^o's formula. In
particular, for any $\gamma> 0$ the solution to the finite rate
symbiotic branching model ${\operatorname{cSBM}(\varrho,\gamma)}_{u_0, v_0}$
also solves the martingale problem $(\mathbf{MP'})_{\mu_0,\nu
_0}^\rho$.
\end{rem}

Somewhat surprisingly, even without prescribing $\Lambda$
we can (at least for $\rho<0$) still prove self-duality and thus uniqueness,
as long we require a certain ``separation of types'' property.
We denote by $(S_t)_{t\ge0}$ the usual heat semigroup.

\begin{thmm}\label{thmm:MPinf}
Fix absolutely continuous initial conditions with densities which are
tempered or rapidly decreasing functions, that is, $(\mu_0,\nu_0)\in
(\calB_\tem^+)^2$,
respectively, $(\mu_0,\nu_0)\in(\calB_\rap^+)^2$.
Assume that $\rho\in(-1,0)$.
\begin{longlist}[(ii)]
\item[(i)]
There exists a unique solution $(\mu_t,\nu_t)_{t\ge0}$ to the
martingale problem $(\mathbf{MP'})_{\mu_0,\nu_0}^\rho$ that is characterized
by the following ``separation of types'' property:
For all $t\in(0,\infty)$, $x\in\R$ and $\eps>0$ we have
%
\begin{equation}
\label{singularity 1} S_{t+\eps} \mu_0(x) S_{t+\eps}
\nu_0(x)\ge\E_{\mu_0,\nu_0} \bigl[S_\eps
\mu_t(x) S_\eps\nu_t(x) \bigr]\mathop{\rightarrow}^{\eps
\downarrow0}0.
\end{equation}
\item[(ii)] Moreover, for each $\gamma>0$ denote by $(\mu_t^\sse
{\gamma
},\nu_t^\sse{\gamma})_{t\ge0}$ the solution to\break $\operatorname{cSBM}(\rho
,\gamma)_{\mu_0,\nu_0}$, considered as measure-valued processes. Then,
as \mbox{$\gamma\uparrow\infty$}, the sequence of processes $(\mu
^\sse
{\gamma}_t,\nu^\sse{\gamma}_t)_{t\ge0}$ converges in law in
$D_{[0,\infty)}(\calM_\tem^2)$, respectively, in $D_{[0,\infty
)}(\calM
_\rap^2)$, equipped with the Meyer--Zheng ``pseudo-path'' topology
to the unique solution of the martingale problem $(\mathbf{MP'})_{\mu
_0,\nu_0}^\rho$
satisfying~\eqref{singularity 1}.
\end{longlist}
\end{thmm}

We call the unique solution to the martingale problem $(\mathbf
{MP'})_{\mu_0,\nu_0}^\rho$ satisfying~\eqref{singularity 1}
the \emph{continuous-space infinite rate symbiotic branching process}.

Note that \emph{if} the measures $\mu_t$ and $\nu_t$ are absolutely
continuous for some $t>0$, then by a simple application of Fatou's
lemma, condition \eqref{singularity 1} implies mutual singularity of
the measures, that is, the separation of types in the more intuitive
sense \eqref{singularity1}; see also the proof of Corollary~\ref
{cor:singularity}.

\begin{rem} \emph{Comparison to the discrete-space infinite rate model}.
The martingale problem $(\mathbf{MP'})_{\mu_0,\nu_0}^\rho$ may be
regarded as a continuous-space analogue of the martingale problem
employed in \cite{KM11b}, Theorem~1.1, to characterize the
discrete-space infinite rate mutually catalytic branching model.
In the discrete case, uniqueness is achieved by prescribing the
condition that $u_t(k) v_t(k) = 0$ for all $k$ in the
state space,
and it suffices to consider
test functions $\phi,\psi$ with disjoint support (i.e., $\phi\psi
\equiv0$).
Consequently,
the last term in~\eqref{MP9} vanishes, and $\Lambda$ does not appear.
Also, it is not possible to copy the self-duality proof from \cite{KM11b},
Proposition~4.7, since unlike in the discrete-space context in
continuous space, we cannot apply the Laplacian directly to the
solutions. We have to ``smooth out'' the solutions (see the proof of
Proposition~\ref{prop:self-dual} below), which, however, destroys the
disjoint support property, giving the additional term in \eqref{MP9}
involving the correlation structure $\Lambda$.

A similarity to the discrete model is that we formulate the convergence
in the weaker Meyer--Zheng
topology. This allows us to work with very general initial conditions
and to relax the condition on the correlation
to $\rho< 0$. Unfortunately, we cannot show convergence for all $\rho
\in(-1,1)$ as for the discrete model,
as we do need bounded second moments.

Finally, in the discrete model the limiting object can be described by
a system of stochastic differential equations
with jumps. We do not yet have such an explicit description of the
limit and we will describe possible approaches
to this problem in Remark~\ref{re:open} below.
\end{rem}

We return to the symbiotic branching model with complementary Heaviside
initial conditions for some fixed branching rate $\gamma>0$, and to the
corresponding diffusively rescaled solutions, considered
as measure-valued processes $(\mu^\ssup{n}_t,\nu^\ssup{n}_t)_{t
\geq
0}$ as in \eqref{defn:mu_nu}.
From the scaling property~\eqref{scaling property} it follows that
$(\mu^\ssup{n}_t, \nu^\ssup{n}_t)_{t \geq0}$ are in law equal
to the nonrescaled system $(\mu^\sse{n\gamma}_t, \nu^\sse{n\gamma
}_t)_{t \geq0}$
with branching rate $\gamma n$. In particular, Theorem~\ref
{thmm:MPinf}(ii) shows that
$(\mu^\ssup{n}_t, \nu^\ssup{n}_t)_{t \geq0}$ converges in law in the
Meyer--Zheng ``pseudo-path'' topology for any $\rho< 0$.

However, in Theorem~\ref{thmm:main1} we have stated convergence in the
stronger Skorokhod topology
on $C_{[0,\infty)}( \calM_\tem^2)$ (albeit for a smaller range of the
parameter $\rho$).
As we have explained in Remark~\ref{re:2011-1},
this indicates that some extra input is needed. We now state the
full version of our main result, which generalizes Theorem~\ref{thmm:main1}
by characterizing the limit for $\varrho>-1$ as the continuous-space
infinite rate symbiotic branching process; recall that for $\varrho
=-1$, the limit is characterized by the results in \cite{T95}.

\begin{thmm}
\label{thmm:main2}
Assume $\varrho\in(-1, -\frac{1}{\sqrt{2}})$. Let $(u_t, v_t)_{t\ge
0}$ be a solution to $\operatorname{cSBM}(\rho,\gamma)_{\mu_0,\nu_0}$
with complementary Heaviside initial conditions $(\mu_0, \nu_0) = (\1
_{\R^-}, \1_{\R^+})$. Then the sequence of
processes $(\mu_t^\ssup{n}, \nu_t^\ssup{n})_{t \geq0}$ converges in\break 
$\calC_ {[0,\infty)}( \calM_\tem^2)$ w.r.t. the Skorokhod topology to
the unique solution $(\mu_t,\nu_t)_{t\ge0}$ of the martingale problem
$(\mathbf{MP'})_{\mu_0,\nu_0}^\rho$ satisfying \eqref{singularity 1}
from Theorem~\ref{thmm:MPinf}.
Moreover, the limit
has the following properties:
\begin{itemize}
\item It is also the unique solution to the martingale problem
$(\mathbf{MP})_{\mu_0,\nu_0}^\rho$ of
Definition~\ref{defn:MP} with the property \eqref{singularity 1}.
\item Absolute continuity: For each fixed $t>0$, $\mu_t$ and $\nu_t$
are absolutely continuous w.r.t. the Lebesgue measure $\ell$,
\[
\mu_t(dx)=\mu_t(x)\,dx, \qquad \nu_t(dx)=
\nu_t(x)\,dx, \qquad \p\mbox{-a.s.}
\]
\item
The ``separation of types'' property holds also in the sense \eqref
{singularity1}, that is,
for each $t>0$, the (absolutely continuous) measures $\mu_t$ and $\nu
_t$ are mutually singular: We have
\[
\mu_t(\cdot) \nu_t(\cdot)=0,\qquad \p\otimes\ell\mbox{-a.s.}
\]
\end{itemize}
\end{thmm}

\begin{rem}
Note that our result state convergence in the Skorokhod topology, which
is stronger
than the Meyer--Zheng topology employed in Theorem~\ref{thmm:MPinf} and
also in the discrete-space model.
For the continuous model, we believe that the stronger result should
also be true
for a larger range of parameters. In contrast, in the discrete model,
the limit is
given by a system of stochastic differential equations with jumps that
is not continuous and
so cannot be the Skorokhod limit of continuous processes.
\end{rem}

\begin{rem}\label{rem:not_a_limit}
With the help of the characterization in Theorem~\ref{thmm:main2}, one
can now show that unlike in the stepping stone case considered in \cite
{T95}, for $\rho> -1$ the limit
cannot be of the form
%
\begin{equation}
\label{eq:1807-1} (\mu_t,\nu_t)_{t \geq0 } = (
\1_{\{x
\leq I_t\}} \,dx, \1_{\{x \geq I_t\}} \,d x)_{t \geq0} ,
\end{equation}
for a semimartingale $(I_t)_{t\ge0}$. Indeed, suppose that $(\mu
_t,\nu
_t)$ is of this form and satisfies
the martingale problem $(\mathbf{MP})_{\mu_0,\nu_0}^\rho$. First of
all, since the limiting
measure-valued processes are continuous,
this forces $(I_t)_{t \geq0}$ to be a continuous semimartingale.
Moreover, the initial conditons
tell us that $I_0 = 0$. Therefore, we can write $I_s = M_s + A_s$
for a continuous local martingale $M_t$ (with $M_0 = 0$) and a
continuous, adapted process $A_t$ that
is of locally finite variation (and $A_0 = 0$). Now, let $\phi\in
\calC
_{\rap}^{(2)}$. Then, by It\^o's formula, we have that
\begin{eqnarray*}
\langle\mu_t, \phi\rangle& =& \int_{-\infty
}^{I_t}
\phi(x) \,dx = \langle\1_{\R^-}, \phi\rangle+ \int_0^t
\phi(I_s) \,d I_s + \frac{1}2 \int
_0^t \phi'(I_s) \,d
[I]_s
\\
& =& \langle\1_{\R^-}, \phi\rangle+ \int_0^t
\phi(I_s) \,d M _s + \int_0^t
\phi(I_s) \,d A_s+ \frac{1}2 \int
_0^t \langle \mu_s, \Delta\phi
\rangle \,d [I]_s.
\end{eqnarray*}
Thus, by the first condition~\eqref{MP1} of $(\mathbf{MP})_{\mu
_0,\nu_0}^\rho$,
we can deduce that
\[
\int_0^t \phi(I_s) \,d
A_s+ \frac{1}2 \int_0^t
\langle\mu_s, \Delta \phi \rangle \,d [I]_s -
\frac{1}2 \int_0^t \langle
\mu_s, \Delta\phi\rangle \,ds
\]
is a local martingale. Since it is continuous and of locally finite
variation, the expression has to be constant equal to $0$.
Moreover, since $\phi$ was arbitrary, this allows us to conclude that
$A_t$ is identically $0$ and $[I]_t = t$.
Hence, $I_t$
is a Brownian motion by L\'evy's characterization and thus
\[
\bigl[ \langle\mu_{\cdot}, \phi\rangle, \langle\mu_{\cdot}, \phi
\rangle \bigr]_t = \int_0^t
\phi(I_s)^2 \,ds.
\]
Finally, we note that
\[
\langle\nu_t , \phi\rangle= \int_{I_t}^\infty
\phi(x) \,dx = \int \phi (x) \,dx - \langle\mu_t, \phi\rangle.
\]
In particular, we find that
\[
\bigl[ \langle\mu_{\cdot}, \phi\rangle, \langle\nu_{\cdot}, \phi
\rangle \bigr]_t = - \bigl[ \langle\mu_{\cdot}, \phi\rangle,
\langle\mu_{\cdot}, \phi \rangle\bigr]_t.
\]
This contradicts the second condition~\eqref{Cov1} of $(\mathbf
{MP})_{\mu_0,\nu_0}^\rho$ (since we
assume $\rho\neq-1$), so that the limit cannot be
of the form given in~\eqref{eq:1807-1}.
\end{rem}

In the case $\rho=-1$, the authors in~\cite{MT97} exploit the
corresponding fourth moment bound
to get an estimate on the moments of the width of the interface
$|R(u_t,v_t) - L(u_t,v_t)|$, \emph{without} any rescaling [here we use
the notation \eqref{defn:R_L}].
However, this estimate heavily relies on the fact that there are
``no holes'' in the system where both $u$ and $v$ are zero.
In our case, we can imitate the reasoning to get an estimate
for the approximate interface defined in the following way.
For any $\eps> 0$, define an approximate left endpoint of the
interface as
\[
L_t(\eps) = \inf \biggl\{ x\in\R\dvtx\int_{-\infty}^x
u_t(y) v_t(y) \,dy\geq \eps \biggr\} \wedge
R(u_t,v_t)
\]
and similarly for the right endpoint
\[
R_t(\eps) = \sup \biggl\{ x\in\R\dvtx\int_x^\infty
u_t(y) v_t(y) \,dy\geq \eps \biggr\} \vee
L(u_t,v_t).
\]
Since $|R(u_t,v_t)|,|L(u_t,v_t)|$ are almost surely finite, $R_t(\eps),
L_t(\eps)$
are well defined. Our final result states that this width of the approximate
interface remains small uniformly in $t$
in the following way.

\begin{thmm}\label{thmm:width} Suppose $(u_0, v_0) = (\1_{\R^-}, \1
_{\R
^+})$, $(u_t,v_t)$ is a solution of
$\operatorname{cSBM}(\rho,\gamma)_{u_0,v_0}$ and $\eps> 0$. Then,
for any $\varrho< \varrho(4)= -\frac{1}{\sqrt{2}}$, $p \in(0,1)$ and
any $\delta\in(0, 2(1-p))$, there exists a constant
$C = C(\rho, \delta, p)$ such that for all $t > 0$,
\[
\E_{\1_{\R^-},\1_{\R^+}} \bigl(\bigl(R_t(\eps) - L_t(\eps)
\bigr)^+\bigr)^p \leq C \eps^{-2
+ \delta} \gamma^{-(2 + p -\delta)}.
\]
\end{thmm}
%

\begin{rem}\label{re:open} \emph{Open problems.}
Ideally, one would like to characterize the limiting process $(\mu,\nu
)$ in Theorems~\ref{thmm:MPinf} and~\ref{thmm:main2} in an explicit way.
A first approach toward a better understanding of the limit would be
the identification of the quadratic (co-)variation of the limit
martingales. Indeed, using the same method as in \cite
{Dawsonetal2002}, Lemma~41, it should be
in principle possible to ``compute'' the limit of the processes
$\Lambda
^\sse{\gamma}$ from \eqref{defn:Lambda} as $\gamma\uparrow\infty
$, for
general initial conditions and all $\rho<0$.
The resulting expression will (as the ``collision local time'' in \cite
{Dawsonetal2002}) involve a spatial smoothing of the limit densities
and can then be used to specify the process $\Lambda$ in the martingale
problems $(\mathbf{MP})_{\mu_0,\nu_0}^\rho$ and $(\mathbf
{MP'})_{\mu_0,\nu_0}^\rho$.
Remarkably, it seems that proving the self-duality (Proposition~\ref
{prop:self-dual} below), and hence uniqueness, using this specification
of $\Lambda$ turns out to be technically substantially more involved
than using the ``separation of types'' approach.
In fact, the strength of our approach is that we can show uniqueness
while leaving the process $\Lambda$ largely unspecified.

Nevertheless, it is promising
to again specialize to the case of complementary Heaviside initial conditions.
In this case, we first note that
the constant on the right-hand side of Theorem~\ref{thmm:width} tends to
$0$ as $\gamma\uparrow\infty$.
This strongly suggests that the interface of the diffusively rescaled
processes shrinks to a single point in the limit.
That is, we expect the limit densities to be of the form
\[
\mu_t(x) = \mu_t(x) \1_{\{ x < I_t \}},\qquad
\nu_t(x) = \nu_t(x) \1_{\{
x >
I_t\}},\qquad x\in\R,
\]
with $I_t:=\sup\{x\in\R\dvtx\mu_t(x)>0\}=\inf\{x\in\R\dvtx\nu
_t(x)>0\}$
denoting the position of the interface.
In fact, if we assume this and moreover that the densities are
sufficiently regular at the point of the interface,
then the expression for $\Lambda$ given in terms of the above spatial smoothing
(analogous to~\cite{Dawsonetal2002}) simplifies considerably.
Indeed, preliminary calculations suggest that under these assumptions,
\[
\Lambda_t(dx)=\frac{1}{|\rho|}\int_0^t\,ds
\mu_s(I_s-)\nu _s(I_s+)\delta
_{I_s}(dx).
\]
Note that this is in line with the stepping stone case $\rho=-1$
considered in \cite{T95}, and that the expression blows up as $\rho
\uparrow0$.
However, especially the assumption that the densities are sufficiently
regular at the interface
is rather strong. In particular, it seems likely that for a proof
one would have to go beyond the measure-valued approach of the present
paper. At the moment, we do not even know whether the densities are
locally bounded or not; recall, for example, that the densities of the
\emph{two-dimensional} finite rate continuous mutually catalytic
branching model considered in \cite{Dawsonetal2002} are locally unbounded.

In order to prove that the interface shrinks to one point, a possible
line of attack would be to establish stationarity of the
interface without any rescaling as in~\cite{MT97}.
Another approach, which might also shed some light on the question of
an explicit equation for the limit, could be to diffusively rescale the
discrete-space infinite rate model and to investigate whether it
converges to our limit process.
This is also supported by the conjecture of Klenke and Oeler~\cite
{KO10}, that for the discrete-space infinite rate
model, the interface is essentially a single point.
However, carrying out this rather ambitious program is clearly beyond
the scope of the present paper, and
will be taken up in future research.
\end{rem}

\subsection{Strategy of proof and organization of the paper}\label
{ssn:strategy}

The proof of our main result, Theorem~\ref{thmm:main2}, splits into
two parts:
The first step is to show
tightness, while the second step is to find a property that uniquely
identifies the limit
points. In our case, we can show that any limit point satisfies the
martingale problem
$(\mathbf{MP'})_{\mu_0,\nu_0}^\rho$ and also the additional
``separation of types'' property~\eqref{singularity 1}
which by Theorem~\ref{thmm:MPinf} gives uniqueness.
More concretely
the \emph{proof of Theorem~\ref{thmm:main2}} is obtained by combining
the following results:
\begin{itemize}
\item Tightness in $\calC_{[0,\infty)} ( \calM_\tem^2)$ is proved in
Proposition~\ref{tightness of processes}.
\item In Proposition~\ref{MPinf 1}, we show that any limit point
satisfies $(\mathbf{MP})_{\1_{\R^-},\1_{\R^+}}^\rho$
and therefore by Corollary~\ref{cor MPinf} also $(\mathbf{MP'})_{\1
_{\R
^-},\1_{\R^+}}^\rho$.
To guarantee uniqueness, we also check in Lemma~\ref{lemma:sep} that
the ``separation of types'' condition~\eqref{singularity 1} is satisfied.
\item Finally, we note that the absolute continuity of the limit is proved
in Proposition~\ref{prop:AC}, from which together with Lemma~\ref
{lemma:sep} we obtain also the separation of types in the form of
\eqref
{singularity1}; see Corollary~\ref{cor:singularity}.
\end{itemize}

The \emph{proof of Theorem~\ref{thmm:MPinf}} relies on a strong
interplay between
parts (i) and (ii). More precisely, we will proceed as follows:
\begin{itemize}
\item We show tightness (in the Meyer--Zheng sense) for $(\mu^\sse
{\gamma}, \nu^\sse{\gamma})$
as $\gamma\ra\infty$ starting with general initial conditions
in $\calB_\tem^+$ or $\calB^+_\rap$ for any $\rho< 0$; see
Proposition~\ref{tightness_MZ dual}.
\item Next, we show in Proposition~\ref{MPinf 2} and Lemma~\ref{lemma:sep}
that any limit point satisfies the martingale problem
$(\mathbf{MP'})_{\mu_0,\nu_0}^\rho$ and also property~\eqref
{singularity 1}.
\item These first two steps cover
the existence statement
of part (i). Moreover, they are also essential for the uniqueness as
stated in Proposition~\ref{prop:unique}.
Indeed, the uniqueness proof relies on a self-duality argument,
where we need the existence of the dual process, which in our case is
the infinite rate symbiotic branching
model with rapidly decreasing initial conditions.
\item Part (ii) of Theorem~\ref{thmm:MPinf} is now a corollary of what
we have already shown. Indeed, we have covered the tightness
and proved that any limit point satisfies $(\mathbf{MP'})_{\mu_0,\nu
_0}^\rho$
including~\eqref{singularity 1}
so that uniqueness follows immediately.
\end{itemize}

The structure of the remaining paper is as follows: In Section~\ref
{sn:tightness}, we show tightness for complementary
Heaviside initial conditions and $\rho<-\frac{1}{\sqrt{2}}$ on
$\calC
_{[0,\infty)}$ in the Skorokhod sense, and for general initial
conditions and all $\rho<0$ on $D_{[0,\infty)}$ in the Meyer--Zheng
sense. Next, we consider in Section~\ref{sn:properties} the properties
of limit points
in both topologies.
Furthermore, we prove uniqueness of the martingale problems in
Section~\ref{ssn:martconv}.
In Section~\ref{ssn:moments}, we provide a missing ingredient for the
proof of tightness in the strong sense, namely an estimate on integrated
fourth mixed moments. Finally, in Section~\ref{ssn:width} we prove
Theorem~\ref{thmm:width}
as a corollary to the fourth moment bound.

Many of the basic techniques, such as using duality to show uniqueness
and deducing tightness from moments estimates,
are standard in the literature for measure-valued processes.
Also, the Meyer--Zheng topology has
been used for the \emph{discrete} infinite rate symbiotic branching
model, because in this topology, tightness
relies only on relatively weak moment bounds.
However, we would like to highlight two novelties in our approach:
In our Theorem~\ref{thmm:main1} we claim convergence in the Skorokhod
topology, which is stronger
than convergence in the Meyer--Zheng sense. For our result,
we use the Meyer--Zheng topology only to construct the dual process
that then yields uniqueness; cf. also Theorem~\ref{thmm:MPinf}.
This approach allows us to construct the dual process for a large class
of initial conditions, which is essential, since only then
duality can be used to identify the law of the original process.

The second novelty is to show uniqueness without specifying the
correlation $(\Lambda_t)_{t \geq0}$ in the
martingale problem. This should be compared to a similar situation
in~\cite{Dawsonetal2003}, where the authors
show uniqueness for the two-dimensional equivalent of the mutually
catalytic branching model, which satisfies a similar ``separation of types''
property. In their case, they identify the correlation as an
intersection local time and only then deduce uniqueness.

One further important contribution is the integrated fourth moment
bound of Proposition~\ref{mixed_moments} below which is essential for
tightness in the $\calC_{[0,\infty)}$-sense.
Its derivation relies on careful estimates of intersection local times
together with (uniformly) bounded fourth moments,
which explains the restriction on~$\rho$.

\textit{Notation}: We have collected some of the standard facts and notation
about measure-valued processes in Appendix~\ref{appendix0}. In
Appendix~\ref{appendix1} we recall the martingale problem formulation
of the finite rate symbiotic branching model ${\operatorname{cSBM}(\varrho
,\gamma)}_{u_0, v_0}$ and deduce some consequences of the martingale
problem $(\mathbf{MP})_{\mu_0,\nu_0}^\rho$ of Definition~\ref
{defn:MP}. Finally,
Appendix~\ref{appendix2} is a collection of estimates for Brownian motion
and its local time.
Throughout this paper, we will denote by $c,C$ generic constants whose
value may change from line to line. If the dependence on parameters
is essential, we will indicate this correspondingly.

\section{A bound on integrated fourth mixed moments}
\label{ssn:moments}

The first step is a bound on integrated fourth mixed moments that will
allow us to prove tightness of the sequence \eqref{defn:mu_nu} of
rescaled processes along the lines of~\cite{T95}; see the next section.
For this estimate, we heavily use that the symbiotic branching model is
dual to a system of colored particles via a moment duality due to \cite
{EF04} that we explain now.

We aim to describe the asymptotic behavior of mixed moments of the form
\[
\E_{u_0,v_0} \bigl[u_t(x_1)\cdots
u_t(x_{n})v_t(x_{n+1})\cdots
v_t(x_{n+m}) \bigr].
\]
For $\varrho\in[-1,1]$, the dual works as follows: Consider $n+m$
particles in $\R$ which can take on two colors, say $\red$ and $\blue$.
Each particle moves like a
Brownian motion independently of all other particles. At time $0$, we
place $n$ $\red$ particles at positions $x_1,\ldots, x_n$, respectively,
and $m$ $\blue$ particles at positions $x_{n+1},\ldots,x_{n+m}$.
As soon as two particles meet, they start collecting collision local time.
If both particles are of the same color, one of them changes color when
their collision local time
exceeds an (independent) exponential time with parameter $\gamma$.
Denote by $L_t^=$ the total collision local time collected by all pairs
of the same color up to time $t$, and let $L_t^{\neq}$ be the collected
local time of all pairs of different color up to time $t$.
Finally, let $l_t:=(l_t^\red, l_t^\blue), t\ge0$, be the corresponding
particle process, that is, $l^\red_t(x)$ denotes the number of $\red$
particles
at $x $ at time $t$, and $l^\blue_t(x)$ is defined accordingly for
$\blue$ particles.
Our mixed moment duality function will then be given, up to an
exponential correction involving both $L_t^=$ and $L_t^{\neq}$, by
a moment duality function
\[
(u,v)^{l_t} := \mathop{\prod_{x\in\R\dvtx}}_{ l^\red_t(x)\ \mathrm{or}\ l^\blue
_t(x)\ne0}
u(x)^{l_t^\red(x)}v(x)^{l_t^\blue(x)}.
\]
Note that since there are only $n+m$ particles, the potentially
uncountably infinite product is actually a finite product and hence
well defined. The following lemma is taken from \cite{EF04},
Proposition~12.

\begin{lemma}\label{la:mdual}
Let $(u_t,v_t)_{t\ge0}$ be a solution to $\operatorname{cSBM}(\varrho
,\gamma
)_{u_0,v_0}$ with $\varrho\in[-1,1]$. Then, for any $x \in\R$ and
$t\geq0$,
%
\begin{equation}\qquad
\label{eq:dual} \E_{u_0,v_0} \bigl[u_t(x_1)\cdots
u_t(x_{n})v_t(x_{n+1})\cdots
v_t(x_{n+m}) \bigr]=\E \bigl[(u_0,v_0)^{l_t}e^{\gamma(L_t^=+\varrho
L_t^{\neq})}
\bigr],
\end{equation}
where the dual process $(l_t)_{t\ge0}$ behaves as explained above,
starting in $l_0=(l_0^\red, l_0^\blue)$ with $\red$ particles
located in $(x_1, \ldots, x_n)$ and $\blue$ particles in $(x_{n+1},
\ldots, x_{n+m})$, respectively.
\end{lemma}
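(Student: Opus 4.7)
The plan is to prove \eqref{eq:dual} by a standard martingale coupling argument. Take $(u,v)$ solving $\mathrm{cSBM}(\varrho,\gamma)_{u_0,v_0}$ and the coloured particle system $(l_r,L^=_r,L^{\neq}_r)_{r\ge 0}$ (with $l_0$ as specified) defined on independent copies and coupled on a product probability space. For $s\in[0,t]$ introduce the candidate
\[ H_s \, := \, (u_s,v_s)^{l_{t-s}}\exp\bigl(\gamma L^=_{t-s}+\gamma\varrho L^{\neq}_{t-s}\bigr). \]
Then $H_0$ equals the integrand on the right-hand side of \eqref{eq:dual}, while $H_t = u_t(x_1)\cdots u_t(x_n)v_t(x_{n+1})\cdots v_t(x_{n+m})$, so the duality is equivalent to the constancy of $s\mapsto \E[H_s]$.

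To establish constancy, the key step is a formal generator duality for $F(u,v,l):=(u,v)^l$. The cSBM generator $\mathcal L_{u,v}$ applied to $F$ at fixed $l$ splits into a Laplacian part from the drift (which after integration by parts matches the Brownian generator $\mathcal L_l$ applied to $F$ at fixed $(u,v)$, accounting for the free motion of each particle) and a second-derivative part coming from the noise covariance. For a coincident pair of same-coloured particles at $x$, the diagonal covariance $\gamma u(x)v(x)$ combined with $\partial^2_{u(x)}F$ produces, per unit of pairwise local time, the contribution $\gamma F^{\mathrm{flip}}$, where $F^{\mathrm{flip}}$ denotes $F$ evaluated at the configuration obtained by flipping one of the two colours. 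Decomposing as $\gamma[F^{\mathrm{flip}}-F]+\gamma F$ identifies $\gamma[F^{\mathrm{flip}}-F]$ as the colour-change generator of the dual, while the residual $\gamma F$ is precisely the Feynman--Kac potential generated by the weight $\exp(\gamma L^=_{t-s})$. For a coincident pair of different colours at $x$, the cross-covariance $\gamma\varrho u(x)v(x)$ together with $\partial^2_{u(x)v(x)}F$ yields $\gamma\varrho F$ per unit local time, matching the Feynman--Kac potential from $\exp(\gamma\varrho L^{\neq}_{t-s})$. Summing the contributions gives $\tfrac{d}{ds}\E[H_s]=0$ formally.

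This computation is not rigorous because $u(x)v(x)\delta(x-y)$ is ill-defined for the distribution-valued solution $(u,v)$. To make it honest, I would approximate: apply the martingale problem for $(u,v)$ to smoothed products $\prod_i\langle u_s,\phi^\eps_{x_i}\rangle\prod_j\langle v_s,\phi^\eps_{x_j}\rangle$ with mollified Diracs $\phi^\eps_{x}\to\delta_{x}$, and simultaneously replace $L^=,L^{\neq}$ by mollified collision times $L^{=,\eps},L^{\neq,\eps}$ built from approximate coincidence indicators. For each $\eps>0$ the smoothed analogue $H^\eps_s$ is a genuine martingale by It\^o's formula applied to a smooth bounded functional, whence $\E[H^\eps_0]=\E[H^\eps_t]$. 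Passing to the limit $\eps\downarrow 0$ relies on (i) a.s.\ and $L^p$ convergence of smoothed pairwise Brownian collision local times to their true counterparts, and (ii) uniform integrability of the integrand; since finitely many independent Brownian motions on $[0,t]$ have collision local times with exponential moments of all orders, the weights $\exp(\gamma(L^{=,\eps}+|\varrho|L^{\neq,\eps}))$ are controlled in every $L^p$.

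The main obstacle is in step (ii), namely the tension between the exponential weight and the mixed-moment factor $(u_s,v_s)^{l_{t-s}}$. For bounded initial data such as the complementary Heaviside condition this factor is trivially bounded by one and no extra moments are needed; for general tempered initial data one must either invoke the $p$-th moment bounds of Theorem~\ref{thm:mc} or approximate by bounded initial data and extend the identity afterwards by monotone approximation. Once this integrability is settled, the martingale identity $\E[H^\eps_0]=\E[H^\eps_t]$ passes to the $\eps\downarrow 0$ limit, yielding constancy of $\E[H_s]$ and hence the duality \eqref{eq:dual}.
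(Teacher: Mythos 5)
The paper does not actually prove this lemma: it is quoted directly from \cite[Prop.~12]{EF04}, so there is no internal proof to compare against. Your sketch follows the standard route one would take (and essentially the route of Etheridge--Fleischmann): generator duality for $F(u,v,l)=(u,v)^l$ with Feynman--Kac weights accounting for the coincidence terms, made rigorous by mollification. Your bookkeeping of the diagonal terms --- same-colour coincidence giving $\gamma[F^{\mathrm{flip}}-F]+\gamma F$, different-colour coincidence giving $\gamma\varrho F$ --- is the right identification of the colour-flip generator and the two Feynman--Kac potentials.

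Two steps as written are genuinely wrong, however, not merely informal. First, the mollified process $H^\eps_s$ is \emph{not} a martingale for fixed $\eps>0$: the forward generator applied to the smoothed duality function and the dual generator do not cancel exactly, only up to remainder terms (for instance $\gamma\int_0^\cdot\langle u_rv_r,\phi^\eps_{x_i}\phi^\eps_{x_j}\rangle\,dr$ versus the mollified collision local time of the $i$th and $j$th dual particles, and the mismatch between the mollified weight and the exact exponential colour-flip clock). The correct statement is $\E[H^\eps_t]-\E[H^\eps_0]=\int_0^t\E[R^\eps_s]\,ds$ with a remainder $R^\eps$ that must be shown to vanish as $\eps\downarrow0$; controlling that remainder is where essentially all the work lies, and your sketch replaces it with a false exact identity. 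Second, your uniform-integrability argument rests on the claim that for complementary Heaviside data the factor $(u_s,v_s)^{l_{t-s}}$ is bounded by one; that is true only at the endpoint $s=0$, where it reads $(u_0,v_0)^{l_t}$. For $0<s<t$ the factor involves $u_s(x)^{l^1}v_s(x)^{l^2}$, and $u_s,v_s$ are unbounded. Justifying constancy of $\E[H_s]$ therefore requires a priori finiteness of mixed moments of order $n+m$ of $(u_s,v_s)$ at every intermediate time, for \emph{all} $\varrho\in[-1,1]$ (the lemma includes $\varrho=1$). Theorem~\ref{thm:mc} cannot supply this: it requires $\varrho<\varrho(p)$ and constant initial data, and it is itself derived from the very duality you are proving, so invoking it here is circular. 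You would need an independent moment bound (e.g.\ a Gronwall argument on the moment equations coming from the Green-function representation, or bounds inherited from approximating systems) before the limiting argument can close.
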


Note that if $u_0=v_0\equiv1$, the first factor in the expectation of
the right-hand side equals $1$. Also note that for second mixed
moments, the duality simplifies considerably: In this case, the dual
process is started from two particles of different color, which by the
definition of the process will retain their respective color for all
time (color changes can only occur if two particles of the same color
meet). Introducing two independent Brownian motions $(B^i_t)_{t\ge0}$,
$i=1,2$, with intersection local time $(L^{1,2}_t)_{t\ge0}$, equation
\eqref{eq:dual} can thus be written as
%
\begin{equation}
\label{eq:dual second moments} \E_{u_0,v_0} \bigl[u_t(x)v_t(y)
\bigr]=\E_{x,y} \bigl[u_0\bigl(B_t^1
\bigr)v_0\bigl(B_t^2\bigr)e^{\gamma\rho L_t^{1,2}}
\bigr],
\end{equation}
where here and in the following we will label the Brownian motions
according to their starting positions from left to right.

We now state the fourth (mixed) moment estimate announced above:

\begin{prop}[(Mixed moments)]
\label{mixed_moments}
Let $(u_t, v_t)_{t\ge0}$ be a solution to ${\operatorname{cSBM}(\varrho
,\gamma
)}_{u_0, v_0}$
with initial values $(u_0, v_0) = ( \1_{\R^-},\1_{\R^+})$.
Then,
for $\varrho<\varrho(4)= - \frac{1}{\sqrt{2}}$,
\[
\E_{u_0, v_0} \biggl[ \int\int u_t(x)u_t(y)v_t(x)v_t(y)
\,\dd x \,\dd y \biggr] \leq C(u_0,v_0; \gamma,\rho)
\]
uniformly for all $t\ge0$.
\end{prop}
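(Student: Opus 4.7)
The plan is to apply the colour-particle moment duality of Lemma~\ref{la:mdual} with $n=m=2$. Taking initial positions $(x_1,x_2,x_3,x_4)=(x,y,x,y)$ and initial colours $(1,1,2,2)$, and writing the four independent Brownian trajectories as $Z^i_t=x_i+W^i_t$ with $W^i$ standard Brownian motions started at $0$, the Heaviside initial conditions reduce $(u_0,v_0)^{l_t}$ to the indicator $\1_{A_t}$ of the event that each particle of current colour 1 lies in $\R^-$ and each of current colour 2 lies in $\R^+$ at time $t$. By Fubini--Tonelli the quantity to be bounded becomes $\E[I_{xy}(t)\,e^{\gamma(L_t^{=}+\rho L_t^{\neq})}]$, where $I_{xy}(t):=\int\!\!\int \1_{A_t}\,dx\,dy$ is evaluated pathwise: only particles $1,3$ depend on $x$ and only $2,4$ on $y$, so on the principal event that no colour swap has made $c^1_t=c^3_t$ or $c^2_t=c^4_t$, the pathwise integration yields $I_{xy}(t)=|W^1_t-W^3_t|\cdot|W^2_t-W^4_t|$.

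Next, I would split the total local times $L^=_t$ and $L^{\neq}_t$ according to the pair local times $L^{ij}_t$ weighted by indicators of the current colour configuration. On the principal event the same-initial-position pairs $(1,3)$ and $(2,4)$ always carry different colours, hence contribute to $L^{\neq}_t$ with the negative coefficient $\gamma\rho$; I would use this (together with the elementary inequality $x\,e^{-ax}\le C_a$ and the joint-density estimates for Brownian motion and its local time in Appendix~\ref{appendix2}) to absorb the polynomial range factors $|W^1_t-W^3_t|$ and $|W^2_t-W^4_t|$ into the corresponding exponential contributions $e^{\gamma\rho L^{13}_t}$ and $e^{\gamma\rho L^{24}_t}$. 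The remaining exponential is then an exponential of local times for the four-particle system, which is bounded uniformly in $t$ by the uniform fourth moment estimate of Theorem~\ref{thm:mc}; this bound is available precisely when $\rho<\rho(4)=-1/\sqrt 2$.

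The contribution from colour histories that would make $I_{xy}(t)$ formally infinite (i.e.\ $c^1_t=c^3_t$ or $c^2_t=c^4_t$) has to be handled separately. I expect to treat it by the same damping mechanism, arguing that such a coincidence requires substantial accumulation of same-colour local time between a pair starting at separated positions, which in turn forces the cross-colour local time at the same-initial-position pairs to be large and thus penalised by $e^{\gamma\rho L^{\neq}_t}$. The main obstacle will be exactly this combinatorial bookkeeping: a naive Cauchy--Schwarz splitting of the expectation doubles the exponent of the local times and would hence require the strictly stronger hypothesis $\rho<\rho(8)$. The whole argument therefore hinges on absorbing the polynomial displacement factors into the exponential of local times \emph{before} invoking Theorem~\ref{thm:mc}, and it is precisely this constraint that fixes the threshold $\rho<\rho(4)$ for our approach.
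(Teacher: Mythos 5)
Your starting point (the four-particle moment duality, the reduction of $(u_0,v_0)^{l_t}$ to an indicator, and the role of Theorem~\ref{thm:mc} in forcing $\rho<\rho(4)$) matches the paper, but the central computational step is invalid, and the gap it leaves is precisely where all the work in the actual proof lies. The claim that $I_{xy}(t)=\int\!\!\int \1_{A_t}\,dx\,dy$ can be evaluated pathwise with the trajectories and the colour process held fixed does not hold: the dual process started from $(x,y,x,y)$ is a \emph{different} process for each value of $x-y$, because the collision times between the pair started at $x$ and the pair started at $y$ — hence the colour dynamics, hence $L^=_t$ and $L^{\neq}_t$ — all depend on $x-y$. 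Only a simultaneous translation of the whole configuration preserves the coupling. This is why the paper integrates over $x$ for \emph{fixed} separation $z$ (a legitimate translation argument), obtaining the single length $(B^{r(t)}_t-B^{\ell(t)}_t)^+$ rather than a product of two lengths, and is then left with the genuinely hard task of showing that $z\mapsto\E_{(0,z),(0,z)}\big[(B^{r(t)}-B^{\ell(t)})^+e^{\gamma(L^=_t+\rho L^{\neq}_t)}\big]$ is integrable uniformly in $t$. Your formula $|W^1_t-W^3_t|\,|W^2_t-W^4_t|$ is independent of $(x,y)$, which already signals that the $(x,y)$-dependence has been lost; consequently your argument contains no mechanism producing decay in the separation $|x-y|$, and without such decay the double integral cannot be controlled.

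That decay is the crux of the paper's proof: one conditions on the first collision time $\tau$ between the two groups, observes that the contribution vanishes unless the groups actually meet, and extracts decay of order $z^{-2(1-\eps)}$ either from the small probability of an early meeting or from the damping $e^{\gamma\rho(L^{1,2}_\tau+L^{3,4}_\tau)}$ accumulated by the intra-group opposite-colour pairs before the meeting; after $\tau$ one applies a separate estimate (Lemma~\ref{rough_moment_bound}) via the strong Markov property. Your concern about Cauchy--Schwarz is also resolved differently than you propose: the paper uses a three-fold H\"older inequality with a \emph{large} exponent on the factor $e^{p_2\gamma(L^=+\rho'L^{\neq})}$ — harmless, since Theorem~\ref{thm:mc} holds for every branching rate and the critical curve does not depend on $\gamma$ — while keeping an exponent close to $1$ on the damping factor $e^{-p_3\gamma(\rho'-\rho)L^{\neq}}$. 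The polynomial displacement factor is handled by a separate $t^{1/2}$ bound, not absorbed pathwise into the exponential of local times.
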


Note that by Fubini's theorem and a simple substitution, it is
sufficient to prove that for $z > 0$,
\[
\E_{u_0, v_0} \biggl[ \int u_t(x) u_t(x-z)
v_t(x)v_t(x-z) \,\dd x \biggr]
\]
is integrable in $z$. Our Ansatz is to use
the moment duality from Lemma~\ref{la:mdual} and combine it with the
moment bounds of
Theorem~\ref{thmm:mc}. However, Theorem~\ref{thmm:mc} requires constant
initial conditions, which simplifies the moment duality considerably.

In our case,
the duality in~(\ref{eq:dual}) reads
\begin{eqnarray*}
&&\E_{1_{\R^-},1_{\R^+}} \bigl[u_t(x)  u_t(x-z)
v_t(x) v_t(x-z) \bigr]
\\
&&\qquad=\E_{l_0^\red=(x, x-z), l_0^\blue=(x, x-z)} \bigl[(u_0,v_0)^{l_t}e^{\gamma(L_t^=+\varrho L_t^{\neq})}
\bigr].
\end{eqnarray*}
To describe the dynamics of $(l_t)_{t\ge0}$, we introduce a system of
four independent Brownian motions $\{B^{i}_t$, $i=1,\ldots,4\}$
with respective colors $c_i(t)\in\{ \red, \blue\}$ at time $t$.
We label the Brownian motions according to their starting positions
$B_0^1=0, B_0^2=0, B_0^3=z, B_0^4=z$
in increasing order, and
we set their initial colors to be $c_1(0) = c_3(0) = \red$, while
$c_2(0) = c_4(0) = \blue$. Defining
\[
f^\red:= u_0=\1_{\R^-}, \qquad f^\blue:=
v_0 =\1_{\R^+},
\]
we can rewrite the duality as
\begin{eqnarray*}
&&\E_{u_0, v_0} \bigl[ u_t(x)
v_t(x) u_t(x-z) v_t(x-z) \bigr]
\\
&&\qquad = \E_{l_0^\red=(0,z), l_0^\blue=(0,z)} \Biggl[ \prod_{i=1}^4
f^{c_i(t)} \bigl(x - B_t^i\bigr) e^{\gamma(L_t^= + \rho L_t^{\neq}) }
\Biggr].
\end{eqnarray*}
We now integrate over $x$ and estimate the integral. Note that the
exponential term does not depend on $x$. Hence, we may restrict our
attention to
%
\begin{equation}
\label{eq:prod} \int\prod_{i=1}^4
f^{c_i(t)} \bigl(x - B_t^i\bigr) \,\dd x ,
\end{equation}
for different color configurations.
First observe that
%
\begin{equation}
\label{eq:int} f^\red(x-B_t) = \1_{\{ x < B_t\}} \quad\mbox{and} \quad f^\blue(x-B_t) = \1 _{\{
x > B_t \}},
\end{equation}
so that one should think of the integral in~(\ref{eq:prod}) as
an integral over a product of Heaviside functions centered at $B_t^i$, where
the color determines the shape.

Now denote by $r(t)$ the index of the left-most $\red$ Brownian motion
at time $t$, that is, $c^{r(t)}(t) = \red$ and
\[
B^{r(t)}_t \leq B_t^i\qquad \mbox{for all
} i \mbox{ such that } c^i(t) = \red,
\]
where we choose the smaller index to resolve ties.
Similarly, we denote by $\ell(t)$ the index of the right-most $\blue$ Brownian
motion, that is, $c^{\ell(t)}(t) = \blue$ and
\[
B^{\ell(t)}_t \geq B_t^i \qquad\mbox{for all
} i \mbox{ such that } c^i(t) = \blue
\]
(with the smaller index to resolve ties).

Observe that, due to the definition of our dual particle system
$(l_t)_{t\ge0}$, if we start with four particles and two colors,
there will always be at least one $\red$ particle and at least one
$\blue$ particle around at any time, no matter
what the actual color changes were (color changes can only occur if two
particles of the same color meet).
Moreover, with the above notation, the integral in~(\ref{eq:prod}) is
$0$ unless $B^{r(t)}_t > B^{\ell(t)}_t$
(see Figure~\ref{fig:indicators-1}),
and since the product is either $0$ or $1$, we obtain
\[
\int\prod_{i=1}^4 f^{c_i(t)}
\bigl(x - B_t^i\bigr) \,\dd x = \bigl(B^{r(t)}_t
- B^{\ell(t)}_t\bigr)^+;
\]
see also Figure~\ref{fig:indicators-2}.

\begin{figure}

\includegraphics{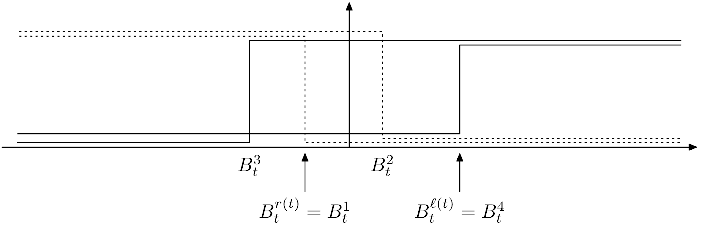}

\caption{An illustration of the four factors in the product
in~\protect\eqref{eq:prod}
(drawn slightly shifted for illustration). The Heaviside functions
are centred at the positions of the Brownian motions, and the color
determines the shape
($\red$ is dotted, and $\blue$ is drawn in black). Here, $c_1(t) =
c_2(t) = \red$ and
$c_3(t) = c_4(t) = \blue$. In this case, the product of all four
factors is zero, since $B_t^{r(t)} < B_t^{\ell(t)}$.}
\label{fig:indicators-1}
\end{figure}

\begin{figure}[b]

\includegraphics{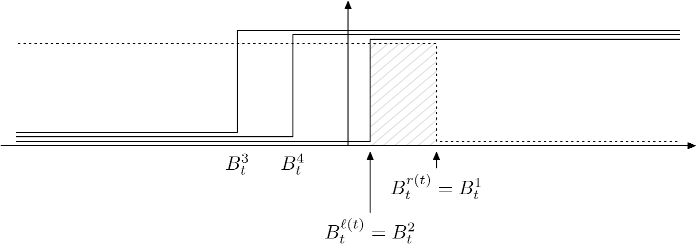}

\caption{In this scenario, $c_1(t) = \red$, $c_2(t) = c_3(t) = c_4(t)=
\blue$. Since $B_t^{\ell(t)} < B_t^{r(t)}$, the
integral gives a nonzero contribution corresponding to the shaded
area.}
\label{fig:indicators-2}
\end{figure}

Altogether, we arrive at
%
\begin{eqnarray}
\label{eq:reforml} %
&&\E_{u_0, v_0} \int
u_t(x)  u_t(x-z) v_t(x)v_t(x-z)
\,\dd x
\nonumber
\\[-8pt]
\\[-8pt]
\nonumber
&&\qquad = \E _{(0,z), (0,z)} \bigl[ \bigl(B^{r(t)} - B^{\ell(t)}\bigr)^+
e^{\gamma(L_t^= +
\rho
L_t^{\neq})} \bigr]
\end{eqnarray}
and need to show that, for $z>0$, this expression is integrable in $z$.
We prepare this with a lemma
which covers the important case where the two particles that are
initially in the middle start
in the same location.

\begin{lemma}\label{rough_moment_bound} Assume that $\varrho< \varrho
(4)$, and let $-\infty< x<y<z <\infty$ and $\delta\in(0,\frac
{1}{2})$. Then, for any initial configuration $l_0=\underline{x}$ that
contains four particles in positions $x,y,y,z$ and two of each color,
that is,
\[
\underline{x} \in \bigl\{%
(x,y), (y,z); (y,z), (x,y); (x,z), (y,y);
(y,y), (x,z) \bigr\},
\]
we have
\begin{eqnarray*}
&&\E_{\underline{x}} \bigl[ \bigl(B_t^{r(t)} -
B_t^{\ell(t)}\bigr)^+ e^{\gamma
(L_t^= + \rho L_t^{\neq}) } \bigr]
\\
& &\qquad\leq C(\varrho, \gamma, \delta) \min \biggl\{ \frac{(z-y+1)(y-x+1)}{t^{{1}/{2} - \delta}}, 1 \vee
t^{\delta} \biggr\}.
\end{eqnarray*}
\end{lemma}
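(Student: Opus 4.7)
The plan is to apply H\"older's inequality to decouple the Brownian gap $(B_t^{r(t)}-B_t^{\ell(t)})^+$ from the exponential local-time weight $e^{\gamma(L_t^= + \rho L_t^{\neq})}$, with the latter controlled uniformly via the moment duality of Lemma~\ref{la:mdual}. For conjugate exponents $p,q>1$ with $1/p+1/q=1$, we bound
\[
\E_{\underline{x}}\!\bigl[(B_t^{r(t)}-B_t^{\ell(t)})^+ e^{\gamma(L_t^= + \rho L_t^{\neq})}\bigr] \le \E_{\underline{x}}\!\bigl[((B_t^{r(t)}-B_t^{\ell(t)})^+)^p\bigr]^{1/p} \cdot \E_{\underline{x}}\!\bigl[e^{q\gamma(L_t^= + \rho L_t^{\neq})}\bigr]^{1/q}.
\]

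For the exponential factor, I would apply Lemma~\ref{la:mdual} in reverse to the symbiotic branching model with branching rate $q\gamma$ and constant initial data $u_0\equiv v_0\equiv 1$, so that $(u_0,v_0)^{l_t}\equiv 1$ and
\[
\E_{\underline{x}}\!\bigl[e^{q\gamma(L_t^= + \rho L_t^{\neq})}\bigr] = \E_{1,1}\!\bigl[u_t(x_1)u_t(x_2)v_t(x_3)v_t(x_4)\bigr],
\]
where $x_1,\ldots,x_4$ are the starting positions labelled by colour and the expectation is taken with respect to the $\mathrm{cSBM}(\rho,q\gamma)$ process. The generalized H\"older estimate of Remark~\ref{rem:starting_point}(ii), combined with Theorem~\ref{thm:mc}, bounds this expectation by a constant depending only on $(\rho,\gamma,q)$, uniformly in $t\ge 0$ and $\underline{x}$. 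This is where the hypothesis $\rho<\rho(4)=-1/\sqrt{2}$ enters crucially: it ensures that the fourth mixed moments of the symbiotic branching process remain bounded even after the branching rate has been inflated by the H\"older exponent $q>1$.

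For the two complementary estimates of the remaining Brownian moment, the universal bound $C(1\vee t^\delta)$ rests on the key structural observation that in each of the four listed configurations $\underline{x}$ one has $B_0^{r(0)}-B_0^{\ell(0)}\le 0$. Consequently a strictly positive gap between the leftmost type-$1$ and the rightmost type-$2$ particle at time $t$ can only arise through Brownian displacements, possibly combined with a permutation of the labels $(r,\ell)$ triggered by a colour change. Splitting over the at most twelve realizations of $(r(t),\ell(t))$ and applying standard Gaussian moment bounds produces an estimate of the form $c_p\sqrt{t}$ for the $p$-th moment of the gap, which together with the uniformly bounded exponential factor and $p$ chosen large in $1/\delta$ yields the $1\vee t^\delta$ bound (the $\sqrt{t}$-growth being compatible with the looser factor $t^\delta$ once we pass to the minimum with the first bound).

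The main technical obstacle is the position-sensitive bound $(z-y+1)(y-x+1)/t^{1/2-\delta}$, whose product structure strongly suggests viewing the four-particle system as two pairs centred near $(x,y)$ and $(y,z)$, with each pair contributing one of the two factors. The plan is to decompose the event $\{B_t^{r(t)}>B_t^{\ell(t)}\}$ according to the pattern of same-coloured meetings (the only events which can trigger a colour change, and hence the only mechanism by which the gap can open in these configurations), and on each subevent apply the Brownian hitting and transition-density estimates from Appendix~\ref{appendix2}. Heuristically, each pair should contribute a one-dimensional Gaussian factor of order $(d+1)/\sqrt{t}$ in terms of its initial separation $d\in\{y-x,z-y\}$, multiplying to $(z-y+1)(y-x+1)/\sqrt{t}$; the extra $t^\delta$ absorbs the slack from the H\"older coupling used above to replace $q=1$ by $q>1$. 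Taking the minimum of the two resulting bounds then completes the proof.
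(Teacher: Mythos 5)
Your decoupling step is too coarse, and this creates a genuine gap. With a two-factor H\"older bound you obtain
\[
\E_{\underline{x}}\bigl[(B_t^{r(t)}-B_t^{\ell(t)})^+ e^{\gamma(L_t^=+\rho L_t^{\neq})}\bigr]
\le \E_{\underline{x}}\bigl[((B_t^{r(t)}-B_t^{\ell(t)})^+)^p\bigr]^{1/p}\,
\E_{\underline{x}}\bigl[e^{q\gamma(L_t^=+\rho L_t^{\neq})}\bigr]^{1/q},
\]
where the second factor is indeed bounded by a constant uniformly in $t$ (via the moment duality and Theorem~\ref{thm:mc}), and the first factor is of order $C\sqrt{t}$ --- the paper proves exactly this gap estimate by a case analysis on $(r(t),\ell(t))$. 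But the product is then $O(\sqrt{t})$, which is strictly weaker than \emph{both} alternatives in the claimed minimum (for $\delta<\tfrac12$, $t^\delta \ll \sqrt{t}$). Your remark that the $\sqrt{t}$ growth is ``compatible with the looser factor $t^\delta$ once we pass to the minimum'' is not correct: the minimum is of the two stated bounds, and neither of them is implied by an $O(\sqrt{t})$ estimate. More fundamentally, after your decoupling there is no remaining source of decay in $t$ and no mechanism to produce the factors $(z-y+1)(y-x+1)$; the constant bound on the exponential moment has used up all the information in the local times.

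The missing idea is the paper's \emph{three-factor} H\"older split. One picks an intermediate correlation $\rho'$ with $\rho<\rho'<\rho(4)$ and writes $e^{\gamma(L_t^=+\rho L_t^{\neq})}=e^{\gamma(L_t^=+\rho' L_t^{\neq})}\,e^{-\gamma(\rho'-\rho)L_t^{\neq}}$, then applies H\"older with exponents $p_1=p_2$ and $p_3=(1-\delta/2)^{-1}$. The first two factors are handled exactly as you propose ($C\sqrt{t}$ for the gap, a uniform constant for the $\rho'$-exponential since $\rho'<\rho(4)$). All of the decay and all of the position dependence come from the third, \emph{negative-exponent} factor: one shows that for any $s>0$,
\[
\E_{\underline{x}}\bigl[e^{-sL_t^{\neq}}\bigr]\le C\,
\min\Bigl\{\tfrac{(z-y+\log(t\vee e))(y-x+\log(t\vee e))}{t},\ (\log(t\vee e))\,t^{-1/2}\Bigr\},
\]
by distinguishing whether $L_t^{\neq}$ is at least $c\log t$ (giving a polynomially small exponential), whether $L_t^=$ is large while $L_t^{\neq}$ is small (ruled out up to $t^{-cs}$ using the fourth-moment duality again), or whether the total collision local time is small (estimated by the probability that the pairs started at $(y,y)$, respectively $(x,y)$ and $(y,z)$, accumulate little local time, via the classical bound of Corollary~\ref{le:asymp_collision}). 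Raised to the power $1/p_3=1-\delta/2$ and multiplied by $C\sqrt{t}$, this yields both alternatives in the stated minimum. Your heuristic about each pair contributing a factor $(d+1)/\sqrt{t}$ points in the right direction, but without isolating a decaying exponential in $L_t^{\neq}$ there is no expectation against which to realize those probabilities, so the argument as proposed does not close.
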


\begin{pf}
Pick $\varrho'$ so that $\varrho< \varrho'< \varrho(4)$,
and let $\delta\in(0,\frac{1}{2})$.
Using the (generalized) H\"older inequality twice for $p_1,p_2, p_3
\geq1$ with $p_3 = (1-\frac{\delta}{2})^{-1}$ and $p_1 = p_2$ such that
$\frac{1}{p_1}+\frac{1}{p_2} + \frac{1}{p_3}= 1$, we obtain
%
\begin{eqnarray}
\label{eq:0202-1} &&\E_{\underline{x}} \bigl[ \bigl(B^{r(t)} -
B^{\ell(t)}\bigr)^+ e^{\gamma
(L_t^= + \rho L_t^{\neq})} \bigr]\nonumber
\\
&&\qquad \leq\E_{\underline{x}} \bigl[ \bigl(\bigl(B^{r(t)} - B^{\ell(t)}
\bigr)^+ \bigr)^{p_1} \bigr]^{{1}/{p_1}} \E_{\underline{x}} \bigl[
e^{p_2\gamma(L_t^= + \rho' L_t^{\neq
})} \bigr]^{{1}/{p_2}}\\
&&\qquad\quad{}\times \E_{\underline{x}} \bigl[ e^{-p_3\gamma(\rho
'-\rho)
L_t^{\neq}}
\bigr]^{{1}/{p_3}}.\nonumber
\end{eqnarray}
By the moment duality \eqref{eq:dual}, the second expectation in (\ref
{eq:0202-1})
corresponds to the fourth mixed moment of
a system with branching rate $p_2\gamma$, correlation parameter $\rho'$
and constant initial conditions.
Since $\rho' < \rho(4)$, this expression is bounded by a constant
(depending only on $\rho'$) uniformly in $t\ge0$;
see Theorem~\ref{thmm:mc} and also Remark~\ref{rem:starting_point}.

For the first expectation on the right-hand side in~(\ref{eq:0202-1}),
we claim that
%
\begin{equation}
\label{eq:0203-1} \E_{\underline{x}} \bigl[ \bigl(\bigl(B^{r(t)}_t
- B^{\ell(t)}_t\bigr)^+ \bigr)^{p_1}
\bigr]^{{1}/{p_1}} \leq C(p_1)t^{{1}/{2}}.
\end{equation}
The claim follows if we can show that the expectation on
the left-hand side does not depend on the distances of the starting
points $z-y, y-x$. We recall that the particles are labeled from
left to right according to the initial positions. In particular $2,3$
are the labels of the particles
started in $y$. Also, we can always assume that $B_t^{\ell(t)} < B_t^{r(t)}$
since this is the only scenario when we observe a positive contribution
to the expectation.

Denote by $\tau_{i,j}$ the first collision time of particles $i,j$.
We claim that if $B_t^{\ell(t)} < B_t^{r(t)}$, then there exist
$i, j \in\{1, \ldots, 4\}, i \neq j$, such that $c_i (t) \neq c_j(t)$
and $\tau_{ij} \leq t$.
Indeed, suppose first that no color change occurs up to time $t$. Then, if
particles~$2$ and~$3$ (both started in $y$) have different colors,
$\tau
_{2,3} = 0$,
and the claim holds. Conversely, if $2$ and $3$ have the same color,
there has to be a
collision between particles of different colors before time $t$, since
the condition $B_t^{\ell(t)} < B_t^{r(t)}$
implies that both $\blue$ particles are to the left of the $\red$
particles at time $t$; see Figure~\ref{fig:scenario-no-change}
for an illustration.

Moreover, if there is a color change before time $t$, we can consider
particle $i$ that has changed its color last before time $t$ (out of all
particles), say at time $\sigma^i$.
Then by construction of the particle process, the color change happened
through the interaction
with particle $j$, which just before the change had the same color, but
now satisfies $c_i(\sigma_i) \neq c_j(\sigma_i)$
and also $\tau_{ij} \leq\sigma_i\leq t$. However, since $i$ was the
last particle to change color, it follows that
$c_j(t) = c_j(\sigma_i) \neq c_i(\sigma_i) = c_i(t)$; see also
Figure~\ref{fig:scenario-change}.
%
\begin{figure}

\includegraphics{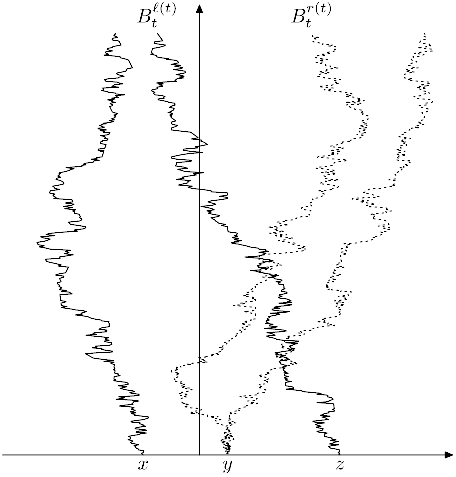}

\caption{No color change occurs up to time $t$ and two $\red$ particles
(dotted line)
start in $y$, while two $\blue$ particles (black line) start in $x$ and
$z$, respectively.
Moreover at time $t$, $B_t^{r(t)} > B_t^{\ell(t)}$ so that particles
of distinct
colors must have crossed.}\label{fig:scenario-no-change}
\end{figure}

\begin{figure}

\includegraphics{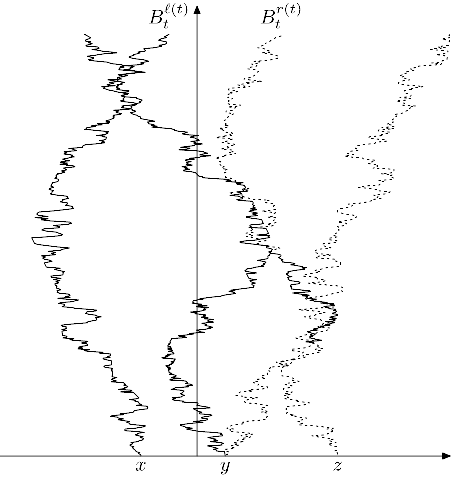}

\caption{If a color change occurs, at least two particles of distinct
colors at time $t$
must have met before.}\label{fig:scenario-change}
\end{figure}

Consequently, in order to show~\eqref{eq:0203-1} we can assume that $i,
j$ are such that $\tau_{ij} \leq t$ and
$c_i(t) \neq c_i(t)$.
Again note that if $B_t^{r(t)} - B_t^{\ell(t)} > 0$, all $\blue$
particles are to
the left of $\red$ particles, so that since particles $i$ and $j$ have
different colors, we find that
$ (B_t^{r(t)} - B_t^{\ell(t)})^+ \leq|B_t^i - B_t^j|$.
Therefore,
by the strong Markov property,
%
\begin{eqnarray}
\label{eq:0203-3} &&\E_{\underline x} \bigl[ \1_{\{\tau_{i,j}\leq t, c_i(t) \neq c_j(t)\}
} \bigl(
\bigl(B_t^{r(t)} - B_t^{\ell(t)}\bigr)^+
\bigr)^{p_1} \bigr]^{{1}/{p_1}}\nonumber
\\
&&\qquad \leq\E_{\underline x} \bigl[ \1_{\{\tau_{i,j}\leq t\}} \bigl|B_t^i
- B_t^j\bigr|^{p_1} \bigr]^{{1}/{p_1}}
\nonumber
\\[-8pt]
\\[-8pt]
\nonumber
&&\qquad \leq\E_{\underline x} \Bigl[ \1_{\{\tau_{i,j} \leq t\}} \E \Bigl[ \sup
_{ 0 \leq s \leq t- \tau_{i,j}} \bigl|B_{\tau_{i,j} + s}^i - B_{\tau_{i,j} + s}^j
 \bigr|^{p_1}| \calF_{\tau_{i,j}} \Bigr] \Bigr]^{
{1}/{p_1}}
\\
&&\qquad \leq\E_{0,0} \Bigl[ \sup_{ 0 \leq s \leq t}
\bigl|B_s^1 - B_s^2 \bigr|^{p_1}
\Bigr]^{{1}/{p_1}} \leq C(p_1) t^{{1}/ {2}}.\nonumber
\end{eqnarray}
By summing over all distinct pairs $i,j$, we thus obtain~\eqref{eq:0203-1}
(where we again make use of the convention that the value of
unspecified constants may change from line to line).

Thus we can conclude from~(\ref{eq:0202-1}) that
\[
\E_{\underline{x}} \bigl[ \bigl(B^{r(t)} - B^{\ell(t)}\bigr)^+
e^{\gamma
(L_t^= +
\rho L_t^{\neq})} \bigr] \leq C(p_1,p_2,\gamma, \rho)
t^{{1}/{2}} \E_{\underline
{x}} \bigl[ e^{-\gamma p_3(\rho'-\rho) L_t^{\neq}} \bigr]^{{1}/{p_3}}.
\]
Recalling that $\frac{1}{p_3} = 1-\frac{\delta}{2}$, we see that
in order to complete the proof
it suffices to show that for any $s > 0$, there
is a constant
$C=C(s)$ such that for all $t \geq0$,
%
\begin{eqnarray}\quad
\label{unequal_local_times}&& \E_{\underline{x}}\bigl[ e^{-s L_t^{\neq}}\bigr]
\nonumber
\\[-8pt]
\\[-8pt]
\nonumber
&&\qquad \leq C \min \biggl
\{ \frac{(z-y+ \log(t \vee e))(y-x + \log(t\vee e))}{t} , \bigl(\log(t\vee e)\bigr)t^{-{1}/{2}} \biggr\},
\end{eqnarray}
where we note that the term $\log(t \vee e)$ can be bounded by
$t^{\delta'} \vee1$ for any $\delta'>0$. Also note that \eqref
{unequal_local_times} holds
trivially
for $t\le1$. Thus we will assume $t\ge1$ throughout the rest of the proof.

First, recall that for the collision local time $L_t^{1,2}$ up to time
$t$ of two independent Brownian motions, started in positions $x\leq
y$, we
have the classical bound that for all $t \geq1$,
%
\begin{equation}
\label{eq:classic} \p_{x,y}\bigl\{ L_t^{1,2} \le\alpha
\log t \bigr\} \le\frac{1}{\sqrt{\pi}} (2\alpha\log t + y-x) t^{-{1}/2},\qquad
\alpha>0;
\end{equation}
see, for example, Corollary~\ref{le:asymp_collision}.
Now fix $s >0$, and let $c = \frac{2}{s}$. We distinguish the three cases:
\begin{longlist}[(iii)]
\item[(i)] $L_t^{\neq} \geq c \log t$,
\item[(ii)] $L_t^{\neq} < c \log t$, but $L_t^\mathrm{tot} := L_t^= +
L_t^{\neq} \geq2c\log t$,
\item[(iii)] $L_t^{\neq} < c \log t$ and $L^\mathrm{tot}_t < 2c\log t$.
\end{longlist}
Regarding (i), we can estimate
\[
\E_{\underline{x}} \bigl[ e^{-s L_t^{\neq}}\1_{\{L_t^{\neq} \geq c
\log
t\}} \bigr] \leq
t^{-sc} = t^{-2},
\]
by our choice of $c = \frac{2} s$.

For (ii), we have in particular that $L_t^= \geq c \log t$.
Now, from the fourth moment bounds (Theorem~\ref{thmm:mc} and
Remark~\ref{rem:starting_point} for the system
with branching rate $\frac{s}{ |\varrho|}$) together with the moment
duality \eqref{eq:dual} for constant initial conditions, we can deduce
that
\begin{eqnarray*}
&&\E_{\underline{x}} \bigl[ e^{-s L_t^{\neq}}  \1_{\{ L_t^{\neq} < c
\log
t, L^\mathrm{tot} \geq2c\log t\}} \bigr]
\\
&&\qquad\le t^{-{cs}/{|\varrho|}}\E_{\underline{x}} \bigl[ e^{
{s}/{|\varrho|} (L_t^= + \varrho L_t^{\neq}) }
\1_{\{ L_t^{\neq} < c
\log t, L^\mathrm{tot} \geq2c\log t\}} \bigr]
\\
&&\qquad\le t^{-{cs}/{|\varrho|}}\E_{\underline{x}} \bigl[ e^{
{s}/{|\varrho|} (L_t^= + \varrho L_t^{\neq}) } \bigr]
\\
&&\qquad\le C(\varrho) t^{-{cs}/{|\varrho|}} \leq C( \varrho) t^{-cs} = C(\varrho)
t^{-2}.
\end{eqnarray*}
Finally, consider case (iii).
Here, note that if the total collision local time is small, then
in particular the collision local time between the two Brownian motions
started at $y$ is small. That is, using \eqref{eq:classic},
\[
\E_{\underline{x}} \bigl[ e^{-s L_t^{\neq}} \1_{\{L_t^{\neq} < c
\log t,
L_t^\mathrm{tot} < 2c \log t\}} \bigr] \leq
\p_{y,y} \bigl\{ L_t^{1,2} \le2c \log t\bigr\} \leq
\frac{4c}{\sqrt{\pi}} (\log t) t^{-{1}/{2}}.
\]
A different bound can be reached by considering the collision local times
between each pair of Brownian motions started in $y,z$ and $y,x$
respectively, leading to [again using \eqref{eq:classic}]
\begin{eqnarray*}
\E_{\underline{x}} \bigl[ e^{-s L_t^{\neq}} \1_{\{
L_t^{\neq} < c
\log t, L_t^\mathrm{tot} < 2c \log t\}} \bigr] & \le&
\p_{x,y}\bigl\{ L_t^{1,2} \leq2 c \log t \bigr\}
\p_{y,z}\bigl\{ L_t^{1,2} \le 2 c \log t \bigr\}
\\
& \leq&\frac{1}{\pi} (4c \log t + y-x) (4c \log t + z-y)t^{-1}.
\end{eqnarray*}
Hence, we can take the minimum of the two bounds for (iii). Then,
we notice that since we are assuming that $t\geq1$,
cases (i) and (ii) are dominated by the contribution of (iii),
so that we obtain~\eqref{unequal_local_times}.
\end{pf}

\begin{pf*}{Proof of Proposition~\ref{mixed_moments}}
Fix $0< \eps< \frac{1}{2}$. By \eqref{eq:reforml}, it suffices to
show that
there exists a constant $C=C(\gamma,\rho,\eps)$ such that for all $z
> 0$,
%
\begin{equation}
\label{eq:0202-2} \E_{(0,z), (0,z)} \bigl[ \bigl(B^{r(t)}_t -
B^{\ell(t)}_t\bigr)^+ e^{\gamma(L_t^=
+ \rho L_t^{\neq})} \bigr] \leq C \bigl(1
\wedge z^{-2(1-\eps)}\bigr) ,
\end{equation}
which is clearly integrable in $z$.

We condition on the time of the first collision of certain pairs of the
four Brownian motions.
Indeed, let $\tau_{i,j}$ denote the first hitting time of the Brownian
motions with index $i$ and
$j$, and consider the stopping time
\[
\tau:= \tau_{1,3} \wedge\tau_{1,4} \wedge
\tau_{2,3}\wedge\tau_{2,4},
\]
which is the first time that a motion started in $0$ meets with a
motion started in $z$.

Note that we can always assume that $\tau\leq t$, for otherwise
the expectation in~(\ref{eq:0202-2}) is zero.
Then, if $(\calF_t)_{t \geq0}$ denotes the filtration of the dual process,
we can apply the strong Markov property and use that up to
time $\tau$ there are no particles of the same color that accumulate
local time. In particular, none of the particles have switched color up
to time $\tau$, so
the positions of $B_\tau^i$ at time $\tau$ and the color configuration
at time $\tau$
satisfy the assumptions of
Lemma~\ref{rough_moment_bound}.
Thus choosing $\delta:= \frac{\eps}{8}$ in Lemma~\ref
{rough_moment_bound}, we obtain that
there exists a constant $C(\rho, \gamma,\eps)$ such that
%
\begin{eqnarray}
\label{eq:1002-1} &&\E_{(0,z), (0,z)} \bigl[ \bigl(B^{r(t)}_t -
B^{\ell(t)}_t\bigr)^+ e^{\gamma
(L_t^= + \rho L_t^{\neq})} \bigr]
\nonumber
\\
&&\qquad = \E_{(0,z), (0,z)} \bigl[ \E \bigl[ \bigl(B^{r(t)}_t -
B^{\ell(t)}_t\bigr)^+ e^{\gamma(L_t^=-L_\tau^= + \rho(L_t^{\neq}- L_\tau^{\neq}) )} | \calF_\tau
\bigr]e^{\gamma(L_\tau^= + \rho L_\tau^{\neq})} \bigr]
\nonumber
\\
&&\qquad \leq4 C(\rho, \gamma, \eps) \E_{(0,z), (0,z)}\\
&&\qquad\quad{}\times \biggl[ \1_{\{ \tau= \tau_{2,3} \leq t\}}
\min \biggl\{ \frac{(B_\tau^4 - B_\tau^3+1)(B_\tau
^2- B_\tau^1 + 1)}{(t-\tau)^{{1}/{2}-\delta}},\nonumber \\
&&\hspace*{205pt}{}(t-\tau)^\delta\vee1 \biggr\}
e^{\rho\gamma(L_\tau^{1,2} + L_\tau
^{3,4})} \biggr].
\nonumber
\end{eqnarray}
Here, we also used that the four possible cases $\tau= \tau_{1,3},
\tau
_{1,4} , \tau_{2,3}, \tau_{2,4}$ are
all equally likely, and in all cases we obtain the same bound from
Lemma~\ref{rough_moment_bound}.
Moreover, in this scenario $L_\tau^{\neq} = L_\tau^{1,2} + L_\tau^{3,4}$.

In the following, we will use repeatedly the fact that for a standard
Brownian motion $(B_t)_{t\ge0}$ with maximum process $(M_t)_{t\ge0}$
and local time $(L^0_t)_{t\ge0}$ at zero, by L\'evy's equivalence (see,
e.g., Lemma~\ref{le:localtime}) we have $L_t^0\stackrel
{d}=M_t\stackrel
{d}=|B_t|$ for all $t>0$, implying that for any $s>0$ there exists a
constant $C=C(s)$ such that
for $t > 0$,
%
\begin{equation}\quad
\label{levy} \E_0 \bigl[e^{-sL_t^0} \bigr]=\E_0
\bigl[e^{-s|B_t|} \bigr] =\frac{1}{\sqrt{2\pi t}}\int_\R
e^{-{x^2}/{(2t)}} e^{-s|x|} \,dx\le C \bigl(1\wedge t^{-1/2}\bigr).
\end{equation}

In the analysis of the right-hand side of~(\ref{eq:1002-1}), we
distinguish four cases
(where we always assume $\tau\leq t$):
\begin{longlist}[(iii)]
\item[(i)] $\tau\leq z^{2-\eps}$;
\item[(ii)] $\tau> z^{2-\eps}$ and ($z^{2-\eps} > t^{{1}/{4}}$
or $t
\leq2$);
\item[(iii)] $\tau> z^{2-\eps}$, but $z^{2-\eps} \leq t^{{1}/{4}}$
and $\tau\leq t^{1/2 - \delta}$, $t \geq2$;
\item[(iv)] $\tau> z^{2-\eps}$, $z^{2-\eps} \leq t^{{1}/{4}}$, but
$\tau> t^{1/2 - \delta}$, $t \geq2$.
\end{longlist}

\textit{Case} (i). On the event that $\tau\leq z^{2-\eps}\wedge t$, we
obtain
\begin{eqnarray*}
&&\E_{(0,z),(0,z)} \biggl[ \1_{\{ \tau= \tau_{2,3} \leq z^{2-\eps}
\wedge
t\}} \min \biggl\{ \frac{(B_\tau^4 - B_\tau^3+1)(B_\tau^2- B_\tau^1 +
1)}{(t-\tau)^{{1}/{2}-\delta}},\\
&&\hspace*{225pt}(t-\tau)^\delta\vee1 \biggr\} e^{\rho\gamma L_\tau^{\neq}} \biggr]
\\
&&\qquad\le\E_{(0,z),(0,z)} \bigl[ \1_{\{\tau= \tau_{2,3}\leq z^{2-\eps}\}} \bigl(B^4_\tau-
B^3_\tau+1\bigr) \bigl(B^2_\tau-
B^1_\tau+1\bigr) \bigr]
\\
&&\qquad \leq\E_{0,0} \Bigl[ \max_{s \leq z^{2-\eps}}
\bigl(B_s^2 - B_s^1 +1
\bigr)^2 \Bigr] \p_{0,z} \bigl\{ \tau_{1,2} \leq
z^{2-\eps} \bigr\}^{{1}/ {2}}
\\
&&\qquad \leq C \bigl(1 \vee z^{2-\eps}\bigr) \p_{0,z} \bigl\{
\tau_{1,2} \leq z^{2-\eps
} \bigr\}^{{1}/ {2} },
\end{eqnarray*}
where we used the Cauchy--Schwarz inequality in the penultimate step.
In order to estimate the first collision time, denoting by $\tau(0)$
the first hitting time of $0$ for a single Brownian
motion $B$ started at~$z$, we observe that
\begin{eqnarray*}
\p_{0,z} \bigl\{ \tau_{1,2} \leq z^{2-\eps}\bigr\} & =&
\p_z \bigl\{ \tau(0) \leq 2 z^{2-\eps}\bigr\}\\
& =&
\p_0 \Bigl\{ \max_{s\leq2 z^{2-\eps}} B_s \geq z
\Bigr\}
\\
& =& 2 \p_0 \{ B_{2 z^{2-\eps}} \geq z \} \\
&\leq&1\wedge \biggl(
\frac{2}{\sqrt{\pi}} z^{-({1}/{2})\eps} e^{-{
z^{\eps}}/{4}} \biggr) ,
\end{eqnarray*}
where we used the reflection principle and a standard Gaussian
estimate; see, for example, \cite{MP10}, Remark~2.22.
Combining the previous two displays
shows that in case (i)
we obtain an upper bound
\[
C \bigl(1 \vee z^{2-\eps-({1}/{4})\eps}\bigr) e^{-({1}/{8})z^{\eps}}
\]
on the right-hand side~of (\ref{eq:1002-1}), which in turn can be estimated by the
right-hand side of (\ref{eq:0202-2}).

\textit{Case} (ii).
In this scenario,
we can find an upper bound on the expectation on the right-hand side
in~(\ref{eq:1002-1}) by
\begin{eqnarray*}
&&\E_{(0,z), (0,z)} \biggl[ \1_{\{z^{2 - \eps} < \tau= \tau_{2,3}
\leq t\}
}
\\
&&\hspace*{38pt}\quad{} \times\min \biggl\{ \frac{(B_\tau^4 - B_\tau^3+1)(B_\tau^2-
B_\tau^1 +
1)}{(t-\tau)^{{1}/{2}-\delta}}, (t-\tau)^\delta\vee1 \biggr\}
e^{\rho\gamma(L_\tau^{1,2} + L_\tau
^{3,4})} \biggr]
\\
&&\qquad \leq\E_{(0,z), (0,z)} \bigl[ \1_{\{ z^{2-\eps} < \tau_{2,3} =
\tau
\leq t \}} \bigl(1 \vee
t^\delta\bigr) e^{\gamma\rho(L^{1,2}_\tau+ L^{3,4}_{\tau})} \bigr]
\\
&&\qquad \leq\bigl(1\vee t^\delta\bigr) \E_{0,0} \bigl[ \exp \bigl(
\gamma\rho L^{1,2}_{z^{2-\eps}} \bigr) \bigr]^2 \leq C
\bigl(1 \vee t^{\delta}\bigr) \bigl(1 \wedge z^{-2 +\eps}\bigr) ,
\end{eqnarray*}
where we used the independence of the two pairs of Brownian motions and then~\eqref{levy}.
Since we assume $t \leq2$
or $z^{2-\eps} > t^{{1}/{4}}$,
this latter expression can be bounded
by $C (1 \wedge z^{-2 +\eps+ 4\delta(2-\eps)})$, which
by our choice of $\delta= \frac{\eps}{8}$ is of the required form.

\textit{Case} (iii). In this case,
we assume in particular that $t \geq2$ and $z^{2-\eps}<\tau\leq
t^{{1}/{2}-\delta}$,
so that we can estimate
\[
(t - \tau)^{-({1}/{2} - \delta)} \leq\bigl(t - t^{{1}/{2}- \delta}\bigr)^{-({1}/{2}-\delta)}
\leq C t^{-{1}/{2} + \delta}.
\]
Hence, we can
deduce from~(\ref{eq:1002-1}) that
\begin{eqnarray*}
&&\E_{(0,z), (0,z)} \biggl[ \1_{\{ z^{2-\eps} < \tau= \tau_{2,3} \leq
t^{1/2 - \delta}\} }
\\
&&\hspace*{37pt}\quad{} \times\min \biggl\{ \frac{(B_\tau^4 - B_\tau^3+1)(B_\tau
^2- B_\tau^1 + 1)}{(t-\tau)^{{1}/{2}-\delta}}, (t-\tau)^\delta\vee1 \biggr\}
e^{\rho\gamma(L_\tau^{1,2} + L_\tau
^{3,4})} \biggr]
\\
&&\qquad \leq\E_{(0,z), (0,z)} \biggl[ \1_{\{z^{2-\eps} < \tau= \tau_{2,3}
\leq t^{1/2 - \delta}\}}\\
&&\hspace*{80pt}{}\times\frac{(B_\tau^4 - B_\tau^3+1)(B_\tau^2-
B_\tau
^1 + 1)}{(t-\tau)^{{1}/{2}-\delta}}
e^{\gamma\rho(L_\tau
^{1,2} +
L_\tau^{3,4})} \biggr]
\\
&& \qquad\leq C t^{-{1}/{2} + \delta} \E_{(0,z), (0,z)} \Bigl[ \max_{s \leq t^{1/2-\delta}}
\bigl(\bigl|B_s^4 - B_s^3\bigr| + 1\bigr)
\bigl(\bigl|B_s^2- B_s^1\bigr| + 1\bigr)
\\
&&\hspace*{147pt}\qquad\quad{} \times\exp \bigl(\gamma\rho\bigl(L^{1,2}_{z^{2-\eps}} +
L^{3,4}_{z^{2-\eps}}\bigr) \bigr) \Bigr].
\end{eqnarray*}
Now, applying H\"older's inequality
with $p = \frac{1}{1-{\eps}/{2}}$ and $q$ its conjugate, and then
using the independence of the two pairs of Brownian motions,
we obtain an upper bound
\begin{eqnarray*}
&&C  t^{-{1}/{2} + \delta} \E_{(0,z), (0,z)} \Bigl[ \max_{s \leq t^{1/2-\delta}}
\bigl(\bigl|B_s^4 - B_s^3\bigr| + 1\bigr)
\bigl(\bigl|B_s^2- B_s^1\bigr| + 1\bigr)\\
&&\hspace*{148pt}{}\times\exp \bigl(\gamma\rho\bigl(L^{1,2}_{z^{2-\eps}} +
L^{3,4}_{z^{2-\eps}}\bigr) \bigr) \Bigr]
\\
&&\qquad \leq C t^{-{1}/{2} + \delta} \E_{(0,0)} \Bigl[ \max_{s \leq
t^{1/2-\delta}}
\bigl(\bigl|B_s^2 - B_s^1\bigr| + 1
\bigr)^q \Bigr]^{{2}/ q} \E_{0,0} \bigl[ \exp \bigl(
\gamma\rho p L^{1,2}_{z^{2-\eps}} \bigr) \bigr]^{
{2}/{p}}
\\
&&\qquad \leq C t^{-{1}/{2} + \delta} \E_{(0,0)} \Bigl[ \max_{s \leq1}
\bigl(t^{({1}/{2})({1}/{2}- \delta)}\bigl|B_s^2 - B_s^1\bigr|
+ 1 \bigr)^q \Bigr]^{{2}/ q}\\
&&\qquad\quad{}\times \E_{0,0} \bigl[ \exp
\bigl(\gamma\rho p L^{1,2}_{z^{2-\eps}} \bigr) \bigr]^{{2}/{p}}
\\
&&\qquad \leq C \bigl(1 \wedge z^{-(2-\eps)({1}/{p})} \bigr),
\end{eqnarray*}
where we used Brownian scaling (and $t\geq2$) to estimate the first
term and \eqref{levy}
for the second term. In particular,
we obtain that the latter expression is bounded
by $C(1 \wedge z^{ - (2- \eps)({1}/{p})})
\leq C(1 \wedge z^{-2(1-\eps)})$, by our choice
of $p$.

\textit{Case} (iv). For the remaining case
(where we can assume $t \geq2$),
we use~(\ref{eq:1002-1}) and
the independence of the Brownian motions to get an upper bound
\begin{eqnarray*}
&&\E_{(0,z), (0,z)} \biggl[ \1_{\{t^{1/2 - \delta} < \tau= \tau_{2,3}
\leq t \}}
\min \biggl\{ \frac{(B_\tau^4 - B_\tau^3+1)(B_\tau^2- B_\tau
^1 + 1)}{(t-\tau)^{{1}/{2}-\delta}},\\
&&\hspace*{230pt}{} (t-\tau)^\delta\vee1 \biggr\}
e^{\rho\gamma(L_\tau^{1,2} + L_\tau
^{3,4})} \biggr]
\\
&&\qquad \leq\bigl( 1 \vee t^{\delta}\bigr) \E_{(0,z),(0,z)} \bigl[ \exp
\bigl({\gamma\rho \bigl(L^{1,2}_{t^{1/2-\delta}} + L^{3,4}_{t^{1/2-\delta}}
\bigr)} \bigr) \bigr]
\\
&&\qquad \leq C \bigl(1 \wedge t^{-{1}/{2}+2\delta}\bigr) \leq C \bigl(1 \wedge
z^{4(2-\eps)
(-{1}/{2} + 2\delta)}\bigr)
\end{eqnarray*}
on~(\ref{eq:1002-1}),
where we used again \eqref{levy}
and finally that $z^{2-\eps} \leq t^{{1}/{4}}$.
Since $2\delta= \frac{1}{4}\eps< \frac{1}{4}$,
the resulting expression is of the form~(\ref{eq:0202-2}).

These cases exhaust all possibilities so that the Lemma is proved
via~(\ref{eq:1002-1}).
\end{pf*}

\section{Tightness}
\label{sn:tightness}

Recall that for initial conditions $( u_0, v_0 ) \in(\calB_\rap^+)^2$,
respectively, $(\calB_\tem^+)^2$, we denote by $( u_t^\sse{\gamma},
v_t^\sse
{\gamma})_{t\ge0}\in\calC_{(0,\infty)}(\calC_\rap^+)^2$,
respectively,\break $\calC
_{(0,\infty)}(\calC_\tem^+)^2$
the solution to
$\operatorname{cSBM}(\varrho,\gamma)_{ u_0, v_0}$
with these initial conditions and finite branching rate $\gamma>0$.
Also recall
that by the scaling property \eqref{scaling property}, this includes
the framework of diffusively rescaled solutions with
complementary Heaviside initial conditions as considered in \eqref{defn:mu_nu}.
We consider the measure-valued processes
%
\begin{eqnarray}
\label{defn:measure-valued processes3} \mu_t^\sse{\gamma}(dx)&:=&
u_t^\sse{\gamma}(x) \,dx,\qquad \nu_t^\sse
{\gamma }(dx):= v_t^\sse{\gamma}(x) \,dx,
\\
\label{defn:measure-valued processes4} \Lambda_t^\sse{\gamma}(dx)&:=&\gamma\int
_0^t\, ds u_s^\sse{
\gamma}(x) v_s^\sse{\gamma}(x) \,dx.
\end{eqnarray}

In this section, we will prove tightness of the above
processes
on the
space of
paths taking values in the space of rapidly decreasing, respectively
tempered, measures.
For $\rho<-\frac{1}{\sqrt{2}}$ and complementary Heaviside initial
conditions, we obtain tightness with respect to the Skorokhod topology
on the space of continuous paths.
For $\rho<0$ and general initial conditions, we can still obtain
tightness in the weaker Meyer--Zheng ``pseudopath'' topology on the
space of c\`adl\`ag paths introduced by \cite{MZ84}; see also the end
of Appendix~\ref{appendix0} for a brief description of this topology.

For tightness w.r.t. the Skorokhod topology, a nice exposition of the
general strategy in the same setting of measure-valued processes can be
found in~\cite{Dawsonetal2002}, Section~4.1. We refer the reader to
Appendix~\ref{appendix0} for a discussion of the spaces of functions
and measures that are employed in the following.

\subsection{Some preliminary estimates}\label{ssn:estimates}
In this subsection, we derive some estimates which are essential for
establishing tightness in both the Skorokhod and the Meyer--Zheng sense.
Let $( u_0, v_0 ) \in(\calB_\rap^+)^2$ [resp., $(\calB_\tem^+)^2$].
Recall that
by the Green function representation for $\operatorname{cSBM}(\varrho
,\gamma
)_{ u_0, v_0}$ (see~\cite{EF04}, Corollary~19, or Corollary~\ref
{cor:Green function representation} in the \hyperref[app]{Appendix}),
we have for every $\gamma>0$ and $\phi\in\bigcup_{\lambda>0}\calC
_{-\lambda}$ (resp., $\phi\in\bigcup_{\lambda>0}\calC_\lambda$) that
%
\begin{equation}
\label{MP1a dual} M^\sse{\gamma}_t(\phi) := \bigl\langle
u^\sse{\gamma}_t, \phi \bigr\rangle - \langle
u_0, S_t\phi \rangle,\qquad N^\sse {
\gamma}_t(\phi) := \bigl\langle v^\sse{
\gamma}_t, \phi \bigr\rangle - \langle v_0,
S_t\phi \rangle\vadjust{\goodbreak}
\end{equation}
are martingales with quadratic (co-)variation
%
\begin{eqnarray}
\label{Cov1a dual} 
&&\bigl[ M^\sse{\gamma}(\phi),
M^\sse{\gamma}(\phi) \bigr]_t
\nonumber
\\
& &\qquad= \bigl[ N^\sse{\gamma}(\phi), N^\sse{\gamma}(\phi)
\bigr]_t 
\nonumber\\
&&\qquad= \gamma\int_0^t
\int_\R S_{t-r}\phi(x)^2
u^\sse{\gamma}_r(x) v^{\sse
{\gamma}}_r(x)
\,dx \,dr,
\\
&& \bigl[ M^\sse{\gamma}(\phi) , N^\sse{\gamma}(\psi)
\bigr]_t \nonumber\\
&&\qquad= \rho\gamma \int_0^t \int
_\R S_{t-r}\phi(x) S_{t-r}\psi(x)
u^\sse{\gamma}_r(x) v^{\sse
{\gamma}}_r(x)
\,dx \,dr.
\nonumber
\end{eqnarray}

We start with the following lemma which shows in particular that the
expectation of the previous display is bounded uniformly in $\gamma>0$:

\begin{lemma}\label{lemma boundedness quadratic variation}
Suppose $\rho< 0$ and $( u_0, v_0 ) \in(\calB_\rap^+)^2$ [resp.,
$(\calB_\tem^+)^2$]. Then for all $t>0$, $\gamma>0$ and $\phi,\psi
\in
\bigcup_{\lambda>0}\calC_{-\lambda}^+$ (resp., $\bigcup_{\lambda
>0}\calC
_\lambda^+$),
we have
%
\begin{eqnarray}
\label{Cov2 dual} &&\gamma\E_{ u_0, v_0} \biggl[ \int_0^t
\int_\R S_{t-s}\phi(x) S_{t-s}\psi (x)
u^\sse{\gamma}_s(x) v^{\sse{\gamma}}_s(x) \,dx
\,ds \biggr]\nonumber
\\
&&\qquad=\frac{1}{|\rho|} \iint\phi(x)\psi(y) \E_{x,y} \bigl[
u_0\bigl(B^\ssup {1}_t\bigr) v_0
\bigl(B^\ssup{2}_t\bigr) \bigl(1-e^{\gamma\rho L_t^{1,2}} \bigr)
\bigr] \,dx\,dy
\\
&&\qquad\uparrow\frac{1}{|\rho|} \iint\phi(x)\psi(y) \E_{x,y} \bigl[
u_0\bigl(B^\ssup{1}_t\bigr) v_0
\bigl(B^\ssup{2}_t\bigr) \1_{\{ L_t^{1,2}>0\}} \bigr] \,dx\,dy<
\infty,\nonumber
\end{eqnarray}
as $\gamma\uparrow\infty$,
where $B^\ssup{1},B^\ssup{2}$ are independent Brownian motions with
intersection local time $L^{1,2}$.
\end{lemma}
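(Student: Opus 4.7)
The equality claimed in~\eqref{Cov2 dual} should follow directly from combining the Green function representation~\eqref{MP1a dual}--\eqref{Cov1a dual} with the second-moment duality~\eqref{eq:dual second moments}, with no direct manipulation of collision local times required. First, taking expectations in~\eqref{Cov1a dual} shows that the left-hand side of~\eqref{Cov2 dual} equals $\tfrac{1}{\rho}\E_{u_0,v_0}\bigl[[M^\sse{\gamma}(\phi), N^\sse{\gamma}(\psi)]_t\bigr]$. Since $s \mapsto \langle u_s^\sse{\gamma}, S_{t-s}\phi\rangle$ and $s \mapsto \langle v_s^\sse{\gamma}, S_{t-s}\psi\rangle$ are $L^2$-martingales on $[0,t]$ (this is the content of~\eqref{MP1a dual}), the expected covariation equals $\E[M_t^\sse{\gamma}(\phi) N_t^\sse{\gamma}(\psi)]$. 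Expanding this product via~\eqref{MP1a dual} and using the first-moment formula $\E[\langle u_t^\sse{\gamma}, \phi\rangle] = \langle u_0, S_t\phi\rangle$ (and the analogue for $v$) simplifies the expression to
\[\E\bigl[\langle u_t^\sse{\gamma}, \phi\rangle \langle v_t^\sse{\gamma}, \psi\rangle\bigr] - \langle u_0, S_t\phi\rangle \langle v_0, S_t\psi\rangle.\]

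Next, I would apply Tonelli (justified by non-negativity) together with the second-moment duality~\eqref{eq:dual second moments} to rewrite
\[\E\bigl[\langle u_t^\sse{\gamma}, \phi\rangle \langle v_t^\sse{\gamma}, \psi\rangle\bigr] = \iint \phi(x)\psi(y)\,\E_{x,y}\bigl[u_0(B_t^1) v_0(B_t^2)\, e^{\gamma\rho L_t^{1,2}}\bigr]\, dx\,dy,\]
while the independence of $B^1, B^2$ under $\E_{x,y}$ gives the ``no-interaction'' analogue
\[\langle u_0, S_t\phi\rangle \langle v_0, S_t\psi\rangle = \iint \phi(x)\psi(y)\,\E_{x,y}\bigl[u_0(B_t^1) v_0(B_t^2)\bigr]\, dx\,dy.\]
Subtracting these two identities and dividing by $\rho = -|\rho|$ produces exactly the first equality of~\eqref{Cov2 dual}.

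For the monotone limit as $\gamma \uparrow \infty$, observe that since $\rho<0$, the map $\gamma \mapsto 1 - e^{\gamma\rho L_t^{1,2}}$ is nondecreasing with pointwise limit $\mathbf{1}_{\{L_t^{1,2} > 0\}}$. Monotone convergence (applicable since $u_0, v_0, \phi, \psi \geq 0$) then yields the stated limit. For finiteness, the crude bound $\mathbf{1}_{\{L_t^{1,2}>0\}} \leq 1$ gives the upper estimate
\[\tfrac{1}{|\rho|}\langle S_t u_0, \phi\rangle \langle S_t v_0, \psi\rangle,\]
which is finite because the heat semigroup preserves the rapidly decreasing (resp.\ tempered) class of $u_0, v_0$, while $\phi, \psi$ lie in the dual tempered (resp.\ rapidly decreasing) class by assumption.

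There is no substantial obstacle here: the lemma is essentially a bookkeeping consequence of the second-moment duality and the $L^2$-martingale structure in the Green function representation. The only mild point to check is that Tonelli applies in all the exchanges of integrals, which is ensured by the non-negativity of $u_0, v_0, \phi, \psi$ and the finiteness of the limiting upper bound above.
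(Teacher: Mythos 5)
Your argument is correct, but it follows a genuinely different route from the paper's proof. The paper computes the left-hand side of \eqref{Cov2 dual} directly: it applies the coloured-particle duality to $\E[u_s^\sse{\gamma}(x)v_s^\sse{\gamma}(x)]$ for each fixed $s$, then uses stationarity and independence of Brownian increments, a change of variables, and finally the occupation-times formula (Lemma \ref{lem:occ_times}) to convert the time integral $\gamma\int_0^t ds$ into a local-time integral $\int_0^t dL_s^{z}\,\gamma e^{\gamma\rho(L_t^{z}-L_s^{z})}$, which integrates explicitly to $\tfrac{1}{|\rho|}(1-e^{\gamma\rho L_t^{z}})$. You instead exploit the polarization identity $\E\bigl[[M^\sse{\gamma}(\phi),N^\sse{\gamma}(\psi)]_t\bigr]=\E\bigl[M_t^\sse{\gamma}(\phi)N_t^\sse{\gamma}(\psi)\bigr]$ for the square-integrable Green-function martingales, reducing the left-hand side to $\tfrac{1}{\rho}\bigl(\E[\langle u_t^\sse{\gamma},\phi\rangle\langle v_t^\sse{\gamma},\psi\rangle]-\langle S_tu_0,\phi\rangle\langle S_tv_0,\psi\rangle\bigr)$, and then feed in the second-moment duality \eqref{eq:dual second moments} and the first-moment formula. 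This is shorter and avoids all local-time manipulations; the sign bookkeeping ($\rho=-|\rho|$) works out, and Tonelli is indeed justified by non-negativity together with the finite majorant $\tfrac{1}{|\rho|}\langle S_tu_0,\phi\rangle\langle S_tv_0,\psi\rangle$. The only input you rely on beyond the duality is that $s\mapsto\langle u_s^\sse{\gamma},S_{t-s}\phi\rangle$ is a genuine $L^2$-martingale with the covariation \eqref{Cov1a dual}; this is exactly what the paper cites from \cite[Cor.~19]{EF04} immediately before the lemma, so there is no circularity. What the paper's longer computation buys is an identity derived without passing through the covariation structure, in the spirit of Tribe's Lemma 4.4 and exhibiting the local-time mechanism behind the factor $1-e^{\gamma\rho L_t^{1,2}}$; your version buys brevity at the same level of rigour.
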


\begin{pf}
First, note that the limit on the right-hand side of \eqref{Cov2 dual} holds by
monotone convergence since $(1-e^{\gamma\rho L_t^{1,2}})\uparrow\1
_{L_t^{1,2}>0}$ as $\gamma\uparrow\infty$. Also observe that the right-hand side is
finite under our assumptions since it is bounded by
%
\begin{eqnarray}
\label{upper bound Cov2a dual}&& \frac{1}{|\rho|} \iint\phi(x)\psi(y) \E_{x,y} \bigl[
u_0\bigl(B^\ssup{1}_t\bigr) v_0
\bigl(B^\ssup{2}_t\bigr) \bigr]\,dx\,dy
\nonumber
\\[-8pt]
\\[-8pt]
\nonumber
&&\qquad=\frac{1}{|\rho|}
\langle\phi,S_t u_0\rangle \langle\psi,S_t
v_0\rangle<\infty;
\end{eqnarray}
see, for example, Lemma~\ref{lemma estimates}(a).

In order to show the first equality in \eqref{Cov2 dual}, we adapt and
elaborate an argument from the proof of \cite{T95}, Lemma~4.4: For a
suitable process $X$, denote by $(L_t^{x,X})_{t\ge0}$ the local time of
$X$ at $x\in\R$. Let $B^\ssup{1}$, $B^\ssup{2}$ be independent Brownian
motions. Then by a change of variables $s\mapsto t-s$, Fubini's theorem
and the colored particle moment duality, we have
%
\begin{eqnarray}
\label{proof lbqv1} &&\gamma \E_{ u_0, v_0} \biggl[ \int_0^t
\,ds \int_\R \,dx S_{t-s}\phi (x)S_{t-s}
\psi(x) u^\sse{\gamma}_s(x) v^{\sse{\gamma}}_s(x)
\biggr]\nonumber
\\
&&\qquad = \gamma\int_0^t\,ds\int_\R
\,dx S_{s}\phi(x)S_{s}\psi(x)
\\
&&\qquad\quad{} \times\E_{0,0} \bigl[ u_0\bigl(B^{(1)}_{t-s}+x
\bigr) v_0\bigl(B_{t-s}^{(2)}+x\bigr)\exp \bigl(
\gamma\rho L_{t-s}^{0,B^\ssup{2}-B^\ssup{1}}\bigr) \bigr].\nonumber
\end{eqnarray}
Writing $B:=(B^\ssup{1}, B^\ssup{2})$ and denoting by $(\calF
_s)_{s\ge
0}$ the natural filtration of $B$, we use that (by the independence and
stationarity of the increments) for functionals $f(B_\cdot)$ of the
two-dimensional Brownian path, we have
\[
\E_{0,0} \bigl[f(B_{\cdot+ s} - B_s)|
\calF_s \bigr]\equiv\E _{0,0} \bigl[f(B_{\cdotp})
\bigr]
\]
for each fixed time $s\ge0$. Applying this with the functional
\[
f(B_\cdot):= u_0\bigl(B^\ssup{1}_{t-s}+x
\bigr) v_0\bigl(B^\ssup{2}_{t-s}+x\bigr)\exp
\bigl(\gamma \rho L_{t-s}^{0,B^\ssup{2}-B^\ssup{1}}\bigr)
\]
for $s\in[0,t]$
and then shifting the $\,dx$-integral (change of variables $y:=-B_s^\ssup
{2}+x$), we
see that \eqref{proof lbqv1} is equal to
\begin{eqnarray*}
&&\gamma\int_0^t \,ds\int_{\mathbb{R}}
\,dx \E_{0,0} \bigl[ \phi \bigl(-B_s^\ssup {1} + x
\bigr) \psi\bigl(-B_s^\ssup{2} + x\bigr)
\\
&&\hspace*{75pt}\quad{} \times\mathbb{E}_{(0,0)} \bigl[ u_0\bigl(B^\ssup{1}_t
- B^\ssup{1}_s+x\bigr) v_0
\bigl(B^\ssup{2}_t - B^\ssup{2}_s+x
\bigr)
\\
&&\hspace*{114pt}\quad{} \times\exp (\gamma\rho\bigl(L_{t}^{0,B^\ssup{2}- B_s^\ssup{2} -
(B^\ssup{1}-B^\ssup{1}_s)}\\
&&\hspace*{153pt}{}\qquad-L_s^{0, B^\ssup{2}- B_s^\ssup{2}
 -
(B^\ssup
{1}-B^\ssup{1}_s)}
\bigr) | \calF_s \bigr] \bigr]
\\
&&\qquad = \gamma\int_0^t\,ds \int
_{\mathbb{R}}\,dy \E_{0,0} \bigl[ \phi \bigl(B_s^\ssup
{2} - B_s^\ssup{1} + y\bigr) \psi(y)
\\
&&\hspace*{85pt}\qquad\quad{} \times u_0\bigl(B^\ssup{1}_t -
B^\ssup{1}_s+B^\ssup{2}_s+y\bigr)
v_0\bigl(B^\ssup {2}_t +y\bigr)
\\
&&\hspace*{85pt}\qquad\quad{} \times\exp \bigl(\gamma\rho
\bigl(L_{t}^{0,B^\ssup{2}- B_s^\ssup{2}
- (B^\ssup{1}-B^\ssup{1}_s)}\\
&&\hspace*{195pt}{}-L_s^{0,B^\ssup{2}- B_s^\ssup{2} -
(B^\ssup
{1}-B^\ssup{1}_s)}
\bigr) \bigr) \bigr]
\\
&&\qquad = \gamma\int_\R \,dy \psi(y) \E_{0,0} \biggl[
v_0\bigl(B^\ssup{2}_t + y\bigr)\int
_0^t \,ds \phi\bigl(B_s^\ssup{2}
- B_s^\ssup{1} + y\bigr)
\\
&&\hspace*{81pt}\qquad\quad{} \times u_0\bigl(B^\ssup {1}_t +
B^\ssup{2}_s - B^\ssup{1}_s+y\bigr)
\\
&&\hspace*{81pt}\qquad\quad{} \times\exp \bigl(\gamma\rho
 \bigl(L_{t}^{B^\ssup{2}_s-B_s^\ssup{1},
B^\ssup{2}-B^\ssup{1}}\\
&&\hspace*{180pt}{}-L_s^{B^\ssup{2}_s - B_s^\ssup{1}, B^\ssup
{2} -
B^\ssup{1}}
\bigr) \bigr) \biggr].
\end{eqnarray*}
Now for the inner integral $\int_0^t\cdots \,ds$, we apply Lemma~\ref
{lem:occ_times} in the \hyperref[app]{Appendix}
and then another change of variables $x:=y+z$
to see that the above equals
\begin{eqnarray*}
&&\hspace*{-4pt} \int_\R \,dy \psi(y) \E_{0,0} \biggl[
v_0\bigl(B^\ssup{2}_t + y\bigr) \int
_\R \,dz \phi(z + y) u_0\bigl(B^\ssup{1}_t
+ z + y\bigr)
\\
&&\hspace*{-6pt}\hspace*{62pt}\quad{} \times\int_0^t \,dL_s^{z,B^\ssup{2}-B^\ssup{1}}
\gamma\exp \bigl(\gamma \rho \bigl(L_t^{z,B^\ssup{2} - B^\ssup{1}} -
L_s^{z,B^\ssup{2} -
B^\ssup
{1}} \bigr) \bigr) \biggr]
\\
&&\hspace*{-6pt}\qquad = \iint \,dx\,dy \phi(x)\psi(y) \\
&&\hspace*{-6pt}\qquad\quad{}\times\E_{0,0} \biggl[ u_0
\bigl(B^\ssup{1}_t + x\bigr) v_0
\bigl(B^\ssup{2}_t + y\bigr)
\\
&&\hspace*{-6pt}\hspace*{30pt}\qquad\quad{} \times\int_0^t \,dL_s^{x-y,B^\ssup{2}-B^\ssup{1}}
\gamma\exp \bigl(\gamma\rho \bigl(L_t^{x-y,B^\ssup{2} - B^\ssup{1}} -
L_s^{x-y,B^\ssup
{2} - B^\ssup{1}} \bigr) \bigr) \biggr]
\\
&&\hspace*{-6pt}\qquad = \iint \,dx\,dy \phi(x)\psi(y) \E_{0,0} \biggl[ u_0
\bigl(B^\ssup{1}_t + x\bigr) v_0
\bigl(B^\ssup{2}_t + y\bigr)
\\
&&\hspace*{-6pt}\hspace*{113pt}\qquad\quad{} \times\frac{1}{|\rho|} \bigl(1-\exp \bigl(\gamma\rho L_t^{x-y,B^\ssup
{2}-B^\ssup{1}}
\bigr) \bigr) \biggr],
\end{eqnarray*}
which gives the first equality in \eqref{Cov2 dual}.
\end{pf}

From the above estimate, we obtain a uniform bound on the first moment
of $( u^\sse{\gamma}, v^\sse{\gamma},\Lambda^\sse{\gamma})$ integrated
against suitable test functions:

\begin{lemma}\label{lemma uniform first moments dual process}
Suppose $\rho< 0$ and $( u_0, v_0 ) \in(\calB_\rap^+)^2$ [resp.,
$(\calB_\tem^+)^2$]. Then for all $T > 0$ and $\phi\in\bigcup_{\lambda
>0}\calC_{-\lambda}^+$ (resp., $\bigcup_{\lambda>0}\calC_\lambda^+$),
we have
%
\begin{equation}
\label{uniform first moments dual process 1} \sup_{\gamma>0} \E_{ u_0, v_0} \Bigl[ \sup
_{0\leq t \leq T} \bigl\langle u^\sse {\gamma}_t,
\phi \bigr\rangle \Bigr] < \infty, \qquad \sup_{\gamma>0}
\E_{ u_0, v_0} \Bigl[ \sup_{0\leq t \leq T} \bigl\langle
v^\sse{\gamma}_t, \phi \bigr\rangle \Bigr] < \infty
\end{equation}
and
%
\begin{equation}
\label{uniform first moments dual process 2} \sup_{\gamma>0} \E_{ u_0, v_0} \Bigl[ \sup
_{0\leq t \leq T} \bigl\langle \Lambda^\sse{
\gamma}_t, \phi \bigr\rangle \Bigr] < \infty.
\end{equation}
\end{lemma}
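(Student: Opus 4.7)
Both bounds in the lemma follow from the just-proved Lemma~\ref{lemma boundedness quadratic variation}, but via different routes.

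For \eqref{uniform first moments dual process 1}, I would use the Green function representation \eqref{MP1a dual} to decompose $\langle u_t^{\sse{\gamma}},\phi\rangle = \langle u_0, S_t\phi\rangle + M_t^{\sse{\gamma}}(\phi)$. The deterministic part is uniformly bounded on $[0,T]$ by standard heat-semigroup estimates on the weighted spaces $\calC_{\pm\lambda}$ (from Appendix~\ref{appendix2}). For the martingale, Doob's $L^2$ inequality gives
\[
\E\big[\sup_{t\le T}M_t^{\sse{\gamma}}(\phi)^2\big]\le 4\,\E\big[[M^{\sse{\gamma}}(\phi)]_T\big],
\]
and by \eqref{Cov1a dual} this bracket is exactly the left-hand side of \eqref{Cov2 dual} with $\psi=\phi$ and $t=T$, which Lemma~\ref{lemma boundedness quadratic variation} controls by $\tfrac{1}{|\rho|}\langle\phi,S_T u_0\rangle\langle\phi,S_T v_0\rangle$ uniformly in $\gamma$. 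Jensen's inequality then passes from $L^2$ to $L^1$, and the same argument handles $v^{\sse{\gamma}}$. Combining with the deterministic contribution yields \eqref{uniform first moments dual process 1}.

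For \eqref{uniform first moments dual process 2}, monotonicity of $\Lambda^{\sse{\gamma}}$ together with $\phi\ge0$ reduces the claim to a uniform-in-$\gamma$ bound on $\E[\langle\Lambda_T^{\sse{\gamma}},\phi\rangle]=\gamma\,\E\big[\int_0^T\!\!\int\phi\,u_s^{\sse{\gamma}}v_s^{\sse{\gamma}}\,dx\,ds\big]$. The difficulty is that this integrand lacks the heat-semigroup smoothing $(S_{T-s}\phi_0)^2$ built into Lemma~\ref{lemma boundedness quadratic variation}, so a direct invocation is not possible. My strategy is to dominate $\phi$ pointwise by such a smoothed quantity: find $\phi_0$ in the relevant test-function class with
\[
\phi(x)\le C\,(S_r\phi_0)^2(x)\quad\text{for all } r\in[0,T],\ x\in\R,
\]
and $\langle\phi_0, S_T u_0\rangle\,\langle\phi_0, S_T v_0\rangle<\infty$. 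Once this is achieved, applying Lemma~\ref{lemma boundedness quadratic variation} with $\tilde\phi=\tilde\psi=\phi_0$ and $t=T$ immediately gives the desired bound. In the rapidly decreasing IC case, where the test function $\phi$ is tempered with $|\phi(x)|\le Ce^{\lambda|x|}$, the choice $\phi_0(x)=e^{\lambda|x|/2}$ works by a one-line Jensen argument: since $z\mapsto e^{\lambda|z|/2}$ is convex, $S_r\phi_0(x)=\E[e^{\lambda|x+B_r|/2}]\ge e^{\lambda|x|/2}=\phi_0(x)$, so $(S_r\phi_0)^2\ge e^{\lambda|x|}\ge\phi(x)/C$. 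The tempered IC case is handled analogously, with $\phi_0$ of exponential decay chosen at a rate lying between the growth rate of $u_0,v_0$ and half the decay rate of $\phi$; the pointwise domination is then verified by direct computation of $S_r\phi_0$.

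The main obstacle is this dominating-function construction for the $\Lambda$-bound: the absence of heat smoothing in the raw integral against $\phi$ prevents a direct appeal to Lemma~\ref{lemma boundedness quadratic variation}, and the choice of $\phi_0$ has to be made carefully so that it simultaneously lies in the admissible test-function class and satisfies the pointwise lower bound uniformly in $r\in[0,T]$. The population bounds in \eqref{uniform first moments dual process 1}, by contrast, are comparatively routine, amounting to Doob's inequality combined with the previous lemma.
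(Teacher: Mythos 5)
Your proposal is correct and follows essentially the same route as the paper: the population bounds come from the Green function representation plus a maximal inequality applied to the martingale part (the paper uses Burkholder--Davis--Gundy with Jensen where you use Doob's $L^2$ inequality, which is equivalent here), with the bracket controlled by Lemma~\ref{lemma boundedness quadratic variation}. For the $\Lambda$-bound the paper likewise reduces to that lemma by dominating $\phi=\phi_\lambda=\phi_{\lambda/2}^2$ pointwise by $C\,(S_{T-s}\phi_{\lambda/2})^2$ via the semigroup lower bound \eqref{estimate 1a}, which is exactly your dominating-function construction with $\phi_0=\phi_{\lambda/2}$.
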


\begin{pf}
Suppose $( u_0, v_0 ) \in(\calB_\rap^+)^2$.
By \eqref{MP1a dual} and \eqref{Cov1a dual}, using the\break 
Burkholder--Davis--Gundy and Jensen inequalities as well as Lemma~\ref
{lemma boundedness quadratic variation} [recall the upper bound \eqref
{upper bound Cov2a dual}], we have
\begin{eqnarray*}
\E_{ u_0, v_0} \Bigl[ \sup_{0\leq t \leq T}  \bigl\langle
u^\sse {\gamma}_t, \phi \bigr\rangle \Bigr] &\le&
\E_{ u_0, v_0} \Bigl[ \sup_{t\in[0,T]} \bigl| M_t^\sse
{\gamma}(\phi )\bigr| \Bigr] + \sup_{0\leq t \leq T} \langle u_0,
S_t\phi \rangle
\\
&\le& C \bigl(\E_{ u_0, v_0} \bigl[\bigl[ M^\sse{\gamma}(\phi),
M^\sse {\gamma }(\phi)\bigr]_T \bigr]
\bigr)^{1/2} + \sup_{0\leq t \leq T} \bigl| \langle u_0,
S_t\phi \rangle \bigr|
\\
&\le& C \biggl( \frac{1}{|\rho|}\langle\phi, S_T u_0
\rangle \langle \phi ,S_T v_0\rangle
\biggr)^{1/2} + \sup_{0\leq t \leq T} \bigl| \langle u_0,
S_t\phi \rangle \bigr|<\infty,
\end{eqnarray*}
and analogously for $\tilde v^\sse{\gamma}$. Since this bound is
independent of $\gamma$, \eqref{uniform first moments dual process 1}
follows. In order to show \eqref{uniform first moments dual process 2},
assume without loss of generality that $\phi=\phi_{\lambda}$ with
$\lambda<0$. Since
\[
\phi_{\lambda}(x)\le C(\lambda,T)\inf_{t\in[0,T]}S_t
\phi_{\lambda}(x),\qquad  x\in\R
\]
[see Lemma~\ref{lemma estimates}, estimate \eqref{estimate 1a} in the
\hyperref[app]{Appendix}],
and again applying bound \eqref{upper bound Cov2a dual}, we get
\begin{eqnarray*}
\E_{ u_0, v_0} \Bigl[ \sup_{0\leq t \leq T} \bigl\langle\Lambda
^\sse {\gamma}_t, \phi_\lambda \bigr\rangle
\Bigr]&=&\E_{ u_0, v_0} \bigl[ \bigl\langle\Lambda^\sse{
\gamma}_T, \phi_{\lambda/2}^2 \bigr\rangle \bigr]
\\
&=&\gamma\E_{ u_0, v_0} \biggl[ \int_{0}^T
\int_\R\phi_{\lambda/2}(x)^2
u^\sse{\gamma}_s(x) v^\sse{
\gamma}_s(x) \,dx\,ds \biggr]
\\
&\le& C\gamma\E_{ u_0, v_0} \biggl[ \int_0^T
\int_\R\bigl(S_{T-s}\phi _{\lambda
/2}(x)
\bigr)^2 u^\sse{\gamma}_s(x)
v^\sse{\gamma}_s(x) \,dx\,ds \biggr]
\\
&\le&\frac{C}{|\rho|}\langle\phi_{\lambda/2},S_T
u_0 \rangle \langle \phi_{\lambda/2},S_T
v_0 \rangle<\infty
\end{eqnarray*}
uniformly in $\gamma>0$.

The proof for initial conditions in $(\calB_\tem^+)^2$ is completely
analogous.
\end{pf}

\begin{corollary}[(Compact containment)]\label{cor:comp_cont}
Suppose $\rho< 0$ and $( u_0, v_0 ) \in(\calB_\tem^+)^2$ [resp.,
$(\calB_\rap^+)^2$].
Then the \emph{compact containment condition} holds for the family $(
u^\sse{\gamma}_t, v^\sse{\gamma}_t,\Lambda^\sse{\gamma}_t)_{t\ge0}$;
that is, for every $\eps>0$ and $T>0$, there exists a compact subset
$K=K_{\eps,T}\subseteq\calM_\tem$ (resp., $\calM_\rap$) such that
\[
\inf_{\gamma>0}\p \bigl\{ u^\sse{\gamma}_t
\in K_{\eps,T} \mbox{ for all } t \in[0,T] \bigr\} \ge1-\eps,
\]
and similarly for $ v_t^\sse{\gamma}$ and $ \Lambda_t^\sse{\gamma}$.
\end{corollary}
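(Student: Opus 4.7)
The argument is a standard soft one: compact containment follows from the uniform moment bounds of Lemma~\ref{lemma uniform first moments dual process} combined with a characterization of compact subsets of the spaces $\calM_\tem$ (resp.\ $\calM_\rap$) via boundedness of integrations against the weight functions $\phi_\lambda$.

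First I would recall that a subset $K\subseteq\calM_\tem$ is relatively compact if and only if $\sup_{\mu\in K}\langle\mu,\phi_\lambda\rangle<\infty$ for every $\lambda>0$, and that this can be checked by testing against any countable sequence $\phi_{\lambda_n}$ with $\lambda_n\downarrow 0$, since such a sequence generates the topology. The analogous criterion in $\calM_\rap$ uses weights $\phi_{\lambda_n}$ with $\lambda_n\uparrow\infty$. This is classical and is contained in the preliminary material of Appendix~\ref{appendix0}.

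Next, fixing $\eps,T>0$ and such a sequence $\lambda_n$, Lemma~\ref{lemma uniform first moments dual process} applied with test function $\phi_{\lambda_n}$ to each of $u^\sse{\gamma}$, $v^\sse{\gamma}$ and $\Lambda^\sse{\gamma}$ yields a constant $C_n=C_n(T)<\infty$ with
\[\sup_{\gamma>0}\E_{u_0,v_0}\Big[\sup_{0\le t\le T}\big(\langle u^\sse{\gamma}_t,\phi_{\lambda_n}\rangle+\langle v^\sse{\gamma}_t,\phi_{\lambda_n}\rangle+\langle\Lambda^\sse{\gamma}_t,\phi_{\lambda_n}\rangle\big)\Big]\le C_n.\]
By Markov's inequality, the constant $R_n=R_n(\eps)$ can then be chosen so large that the event on which all three suprema over $t\in[0,T]$ are bounded by $R_n$ has probability at least $1-\eps/2^n$, uniformly in $\gamma>0$.

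Finally, intersecting these events over $n\in\N$ and setting
\[K_{\eps,T}:=\big\{\mu\in\calM_\tem:\langle\mu,\phi_{\lambda_n}\rangle\le R_n\text{ for all }n\in\N\big\},\]
which is relatively compact by the first step, I obtain $\inf_{\gamma>0}\p\{u^\sse{\gamma}_t\in K_{\eps,T}\text{ for all }t\in[0,T]\}\ge 1-\eps$, and similarly for $v^\sse{\gamma}$ and $\Lambda^\sse{\gamma}$ (if desired, one can enlarge $K_{\eps,T}$ to accommodate all three components simultaneously). The analogous argument in $\calM_\rap$ uses weights $\phi_{\lambda_n}$ with $\lambda_n\uparrow\infty$. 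There is essentially no obstacle here: the only point worth noting is that Lemma~\ref{lemma uniform first moments dual process} bounds the expectation of the \emph{running} supremum and not just the value at a single time, which is exactly what allows the passage from a pointwise bound to the uniform-in-$t$ compact containment required for tightness.
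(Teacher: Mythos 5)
Your proposal is correct and follows essentially the same route as the paper: both use the compact sets $K=\{\nu:\langle\nu,\phi_{\lambda_n}\rangle\le c_n\ \forall n\}$ (the paper takes $\lambda_n=1/n$ for $\calM_\tem$ and $\lambda_n=-n$ for $\calM_\rap$, citing \cite[Prop.\ 37]{Dawsonetal2002} for their compactness), apply Markov's inequality to the running-supremum bound of Lemma~\ref{lemma uniform first moments dual process} with error $\eps/2^n$, and intersect over $n$. Your closing remark correctly identifies the essential point, namely that the lemma controls $\sup_{t\le T}$ rather than a single time.
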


\begin{pf}
Let $( u_0, v_0 ) \in(\calB_\tem^+)^2$. To check the compact
containment condition, as in the proof of \cite{Dawsonetal2002}, Proposition~37, use compact subsets of $\calM_\tem$ of the form
\[
K = K\bigl((c_m)_{m\in\N}\bigr) := \bigl\{ \nu\in
\calM_\tem\dvtx\langle\nu, \phi _{1/m} \rangle\leq
c_m \mbox{ for all } m \in\N\bigr\} ,
\]
where $(c_m)_{m\in\N}$ is a sequence of positive numbers: Given $\eps>
0$ and $T>0$, for any $m \in\N$ we can find
by Lemma~\ref{lemma uniform first moments dual process} a number $c_m =
c_m(\eps,T) > 0$ such that for all $\gamma>0$,
\[
\p \Bigl\{ \sup_{0 \leq t \leq T} \bigl\langle u_t^\sse{
\gamma}, \phi_{1/m} \bigr\rangle\geq c_m \Bigr\} \leq
\frac{\eps}{2^m}.
\]
In particular, it follows that for all $\gamma>0$,
%
\begin{equation}
\label{compcontain} \p \bigl\{ u^\sse{\gamma}_t \in K
\bigl((c_m)_{m \in\N
}\bigr) \mbox { for all } t \in[0,T] \bigr
\} \geq1 - \eps.
\end{equation}
The same reasoning shows that the compact condition also holds for $
v^\sse{\gamma}$ and~$\Lambda^\sse{\gamma}$.

The proof for rapidly decreasing initial conditions $(\tilde u_0,
\tilde v_0 ) \in(\calB_\rap^+)^2$ is completely analogous, using
compact subsets of $\calM_\rap$ of the form
\[
K = K\bigl((c_m)_{m\in\N}\bigr) := \bigl\{ \nu\in
\calM_\rap\dvtx\langle\nu, \phi_{-m} \rangle\leq
c_m \mbox{ for all } m \in\N\bigr\}
\]
together with Lemma~\ref{lemma uniform first moments dual process}.
\end{pf}

\subsection{Tightness in $\mathcal{C}$}

In this subsection we will prove tightness of the family of processes
\eqref{defn:measure-valued processes3}--\eqref{defn:measure-valued processes4}
with respect to the Skorokhod topology on the
space of continuous paths.
The proof relies on the fourth moment bound of Proposition~\ref
{mixed_moments} and thus requires complementary Heaviside initial
conditions and the condition $\rho<-\frac{1}{\sqrt{2}}$.

In the first step, we establish tightness of the above measures
integrated against suitable test functions:

\begin{lemma}\label{tst_fn_tight}
Suppose\vspace*{-1.5pt} $\rho< - \frac{1}{\sqrt{2}}$
and $(u_0, v_0 ) = (\1_{\R^-}, \1_{\R^+})$.
Then for all $\phi\in\bigcup_{\lambda>0} \calC_\lambda$ the
family of
coordinate processes $(\langle\phi, u^\sse{\gamma}_t \rangle
,\langle
\phi, v^\sse{\gamma}_t \rangle,\langle\phi, \Lambda^\sse{\gamma}_t
\rangle)_{ t \geq0}$, considered as a family indexed over $\gamma>0$,
is tight
in the space $\calC_{[0,\infty)}(\R^3)$.
\end{lemma}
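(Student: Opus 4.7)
The plan is to verify the Kolmogorov--Chentsov tightness criterion for each of the three coordinate processes separately, then combine via marginal tightness; tightness at $t=0$ (and at any finite time) is already guaranteed by Lemma~\ref{lemma uniform first moments dual process}. The essential quantitative input is the uniform-in-$\gamma$ bound
\[\sup_{\gamma>0}\sup_{t\ge0}\gamma^2\,\E_{u_0,v_0}\Big[\iint u^\sse{\gamma}_t(x)u^\sse{\gamma}_t(y)v^\sse{\gamma}_t(x)v^\sse{\gamma}_t(y)\,dx\,dy\Big]<\infty,\]
obtained from Proposition~\ref{mixed_moments} combined with the scaling property~\eqref{scaling property}: since complementary Heaviside data are invariant under the $K$-rescaling, writing $u^\sse{\gamma}_t(x)=u^\sse{1}_{\gamma^2 t}(\gamma x)$ produces a Jacobian $\gamma^{-2}$ that exactly cancels the $\gamma^2$ prefactor.

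For the $u^\sse{\gamma}$ (and analogously $v^\sse{\gamma}$) coordinate I would use the Green function representation~\eqref{MP1a dual}--\eqref{Cov1a dual} in restarted form. For $0\le s\le t$ the process
\[M^{(s)}_t(\phi):=\langle u^\sse{\gamma}_t,\phi\rangle-\langle u^\sse{\gamma}_s,S_{t-s}\phi\rangle\]
is a continuous martingale in $t\ge s$ starting from zero, with bracket $\gamma\int_s^t\!\int (S_{t-r}\phi)^2 u^\sse{\gamma}_r v^\sse{\gamma}_r\,dx\,dr$. This gives the decomposition
\[\langle u^\sse{\gamma}_t,\phi\rangle-\langle u^\sse{\gamma}_s,\phi\rangle=M^{(s)}_t(\phi)+\langle u^\sse{\gamma}_s,(S_{t-s}-I)\phi\rangle.\]
Applying Burkholder--Davis--Gundy, expanding the bracket and invoking Cauchy--Schwarz in time together with $\|S_r\phi\|_\infty\le\|\phi\|_\infty$ (note $\phi\in\calC_\lambda$ with $\lambda>0$ is bounded), the uniform fourth moment bound above yields $\E[|M^{(s)}_t(\phi)|^4]\le C(\phi,T)(t-s)^2$. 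The deterministic remainder $\langle u^\sse{\gamma}_s,(S_{t-s}-I)\phi\rangle=\int_0^{t-s}\langle u^\sse{\gamma}_s,\tfrac12\Delta S_r\phi\rangle\,dr$ is estimated via H\"older and by applying the same Green function/BDG argument to the smoothed test function $\tfrac12\Delta S_r\phi$, yielding an $(t-s)^4$ bound. Together,
\[\E\big[|\langle u^\sse{\gamma}_t,\phi\rangle-\langle u^\sse{\gamma}_s,\phi\rangle|^4\big]\le C(\phi,T)(t-s)^2\]
uniformly in $\gamma>0$, and Kolmogorov--Chentsov supplies tightness in $\calC_{[0,T]}(\R)$.

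For $\Lambda^\sse{\gamma}$ the monotone increments
\[\langle \Lambda^\sse{\gamma}_t,\phi\rangle-\langle \Lambda^\sse{\gamma}_s,\phi\rangle=\gamma\int_s^t\!\int\phi\, u^\sse{\gamma}_r v^\sse{\gamma}_r\,dx\,dr\]
can be handled directly at the level of second moments: expanding the square, using Cauchy--Schwarz in time, boundedness of $\phi$, and once more the uniform fourth moment bound gives
\[\E\big[|\langle \Lambda^\sse{\gamma}_t,\phi\rangle-\langle \Lambda^\sse{\gamma}_s,\phi\rangle|^2\big]\le C(\phi)(t-s)^2,\]
which again meets Kolmogorov--Chentsov. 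Marginal tightness implies joint tightness of the $\R^3$-valued process; patching over $T\uparrow\infty$ yields tightness in $\calC_{[0,\infty)}(\R^3)$.

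The main (and only substantive) obstacle is the need for the uniform $\gamma^2$-weighted integrated fourth moment control: without Proposition~\ref{mixed_moments}, the BDG step produces a $\gamma^2$ prefactor from the bracket with nothing to absorb it as $\gamma\to\infty$. This is precisely why the restriction $\rho<\rho(4)=-\tfrac1{\sqrt2}$ propagates into the tightness statement, and it is the reason why this lemma is more delicate than its analogue in the Meyer--Zheng setting of Section~\ref{sn:tightness}, where a second moment bound already suffices.
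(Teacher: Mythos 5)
Your overall strategy --- Green function representation, Burkholder--Davis--Gundy, the uniform $\gamma^2$-weighted integrated fourth moment bound obtained from Proposition~\ref{mixed_moments} via the scaling property, Kolmogorov's criterion, and the direct second-moment treatment of $\Lambda^\sse{\gamma}$ --- is the same as the paper's, and your decomposition of the increment is in fact algebraically identical to the one used there: your restarted martingale $M^{(s)}_t(\phi)$ equals $\int_{[s,t]\times\R}S_{t-r}\phi\,dM^\sse{\gamma}$, and your remainder $\langle u^\sse{\gamma}_s,(S_{t-s}-I)\phi\rangle$ equals $\langle u_0,(S_t-S_s)\phi\rangle+\int_{[0,s]\times\R}(S_{t-r}\phi-S_{s-r}\phi)\,dM^\sse{\gamma}$.

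The gap is in your estimate of that remainder. You write it as $\int_0^{t-s}\langle u^\sse{\gamma}_s,\tfrac12\Delta S_r\phi\rangle\,dr$ and claim an $(t-s)^4$ bound by feeding the smoothed test function $\tfrac12\Delta S_r\phi$ back into the moment machinery. But the lemma must hold for \emph{all} $\phi\in\bigcup_{\lambda>0}\calC_\lambda$, i.e.\ for merely continuous test functions; this is exactly the class needed for Jakubowski's criterion in Proposition~\ref{tightness of processes}. For such $\phi$ one only has $\|\Delta S_r\phi\|_\infty\le C\|\phi\|_\infty\, r^{-1}$, so the fourth moment of $\langle u^\sse{\gamma}_s,\tfrac12\Delta S_r\phi\rangle$ is of order $r^{-4}$ and the $dr$-integral over $(0,t-s)$ diverges --- no positive power of $(t-s)$ comes out, and indeed even the representation of $(S_{t-s}-I)\phi$ as an absolutely convergent integral of $\tfrac12\Delta S_r\phi$ fails for non-smooth $\phi$. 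Your route would only work for $\phi\in\calC^{(2)}_\lambda$, which does not suffice. The paper circumvents this by estimating the remainder in its stochastic-integral form: inside the quadratic variation the relevant quantity is $\|S_{t-r}\phi-S_{s-r}\phi\|_\infty\le2\|\phi\|_\infty\big((t-s)(s-r)^{-1}\wedge1\big)$, which is small except near $r=s$; with $f(r)=1\wedge(t-s)^2(s-r)^{-2}$ one checks $\int_0^sf(r)\,dr\le2(t-s)$, and a weighted Jensen argument combined with the fourth moment bound then yields the required $(t-s)^2$ using only boundedness and continuity of $\phi$. You need to replace your treatment of the remainder by this (or an equivalent) argument.
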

Having established the fourth moment bound in Proposition~\ref{mixed_moments},
the proof of tightness follows closely the proof of~\cite{T95}, Lemma~4.1.
\begin{pf*}{Proof of Lemma \ref{tst_fn_tight}}
The Green function representation for $\operatorname{cSBM}(\rho,\break  \gamma
)_{u_0,v_0}$ (see, e.g., Corollary~\ref{cor:Green function
representation}) yields for $\phi\in\bigcup_{\lambda>0} \calC
_\lambda$ that
%
\begin{equation}
\label{rgfr} \bigl\langle\phi, u^\sse{\gamma}_t \bigr
\rangle= \langle\phi, S_t u_0 \rangle + \int
_{[0,t] \times\mathbb{R}} S_{t-r} \phi(x) M^\sse{
\gamma}(dr,dx) ,
\end{equation}
where $(S_t)_{t \geq0}$ denotes the heat semigroup, and $M^\sse
{\gamma}(dr,dx)$
is a zero-mean martingale measure with
quadratic variation given by
\[
\gamma\int_0^t\int_\R
u^\sse{\gamma}_r(x)v^\sse{\gamma
}_r(x) \bigl(S_{t-r}\phi (x)\bigr)^2 \,dx\,dr.
\]

We check Kolmogorov's tightness criterion for the stochastic integral
in (\ref{rgfr}).
For $0 < s < t$,
we have
%
\begin{eqnarray}
\label{eq:2702-1} &&\int_{[0,t] \times\mathbb{R}} S_{t-r} \phi(x)
M^\sse{\gamma }(dr,dx) - \int_{[0,s] \times\mathbb{R}}
S_{s-r} \phi(x) M^\sse{\gamma }(dr,dx)\nonumber
\\
& &\qquad= \int_{[s,t] \times\mathbb{R}} S_{t-r} \phi(x) M^\sse{
\gamma }(dr,dx)
\\
& &\qquad\quad{}+ \int_{[0,s] \times\mathbb{R}} \bigl(S_{t-r} \phi(x) -
S_{s-r} \phi(x)\bigr) M^\sse{\gamma}(dr,dx).\nonumber
\end{eqnarray}
Consider the fourth moment of the first term on the right-hand side
in~(\ref{eq:2702-1}): Using first the Burkholder--Davis--Gundy
inequality, then Jensen's inequality, the scaling property \eqref
{scaling property}
and finally the fourth moment bound of Proposition~\ref{mixed_moments}
for $\rho< -\frac{1}{\sqrt{2}}$, we obtain
%
\begin{eqnarray}
\label{eq:2702-2} %
&&\E_{u_0,v_0} \biggl[ \biggl( \int
_{[s,t] \times\mathbb{R}} S_{t-r} \phi (x) M^\sse{
\gamma}(dr,dx) \biggr)^4 \biggr]\nonumber
\\
&&\qquad \le C \E_{u_0,v_0} \biggl[ \biggl( \gamma\int_s^t
\int_\R u^\sse {\gamma }_r(x)
v^\sse{\gamma}_r(x) \bigl(S_{t-r} \phi(x)
\bigr)^2 \,dx\,dr \biggr)^2 \biggr]\nonumber
\\
&&\qquad \leq C \Vert\phi\Vert^4_{\infty} (t-s)^2
\E_{u_0,v_0} \biggl[ \biggl(\frac
{1}{t-s}\int_s^t
\int_{\mathbb{R}} \gamma u^\sse{\gamma}_r(x)
v^\sse{\gamma}_r(x) \,dx\,dr \biggr)^{ 2} \biggr]
\\
&&\qquad \leq C(\phi) (t-s) \E_{u_0,v_0} \biggl[ \int_s^t
\biggl( \int_{\mathbb{R}} u^\sse{1}_{\gamma^2 r}(x)
v^\sse{1}_{\gamma^2 r}(x) \,dx \biggr)^{ 2} \,dr \biggr]\nonumber
\\
&&\qquad \leq C(u_0,v_0,\phi, \rho) (t-s)^2.\nonumber
\end{eqnarray}
Now consider the expectation of the fourth power of the second term on
the right-hand side in~(\ref{eq:2702-1}):
Again using the Burkholder--Davis--Gundy inequality and the elementary bound
\[
\Vert S_t \phi- S_s \phi\Vert_{\infty} \leq2
\Vert\phi\Vert _{\infty} \bigl((t-s) s^{-1} \wedge1 \bigr),
\]
which follows from the estimate $\Vert\partial_r S_r\phi\Vert
_\infty\leq
\Vert\phi\Vert_\infty\frac{1}{r}$
together with $\|S_r \phi\|_\infty\le\| \phi\|_\infty$ for any $r >
0$, we have
\begin{eqnarray}
\label{eq:0403-2} &&\E_{u_0,v_0} \biggl[ \biggl(\int_{[0,s] \times\mathbb{R}}
\bigl(S_{t-r} \phi (x) - S_{s-r} \phi(x)\bigr)
M^\sse{\gamma}(dr,dx) \biggr)^4 \biggr]
\nonumber
\\
& &\qquad\leq C \| \phi\|_\infty^4 \E_{u_0,v_0} \biggl[
\biggl( \int_0^s \int_{\R
}
\gamma u^\sse{\gamma}_{ r}(x)v^\sse{
\gamma}_{ r}(x)\\
&&\hspace*{134pt}{}\times \bigl((t-s)^2(s-r)^{-2} \wedge1
\bigr) \,dx \,dr \biggr)^{ 2} \biggr].
\nonumber
\end{eqnarray}
Now defining
\[
f(r) := 1 \wedge(t-s)^2(s-r)^{-2},\qquad  r\in[0,s],
\]
we can rewrite the right-hand side of~(\ref{eq:0403-2}) and then apply
Jensen's inequality,
the scaling property
and finally the fourth moment bound to obtain
%
\begin{eqnarray}
\label{eq:0403-3}&& C \| \phi\|_\infty^4 \biggl( \int
_0^s f(r)\,dr \biggr)^2
\E_{u_0,v_0} \biggl[ \biggl( \frac{1}{\int_0^s f(r) \,dr } \int_0^s
\int_{\R} \gamma u^\sse {\gamma}_{
r}(x)v^\sse{
\gamma}_{ r}(x) \,dx f(r) \,dr \biggr)^{ 2} \biggr]
\nonumber
\\
&&\qquad\leq C \|\phi\|_\infty^4 \int_0^s
f(r) \,dr \int_0^s \E_{u_0,v_0} \biggl[
\biggl( \int_{\mathbb{R}} u^\sse{1}_{\gamma^2 r}(x)
v^\sse {1}_{\gamma^2
r}(x) \,dx \biggr)^{ 2} \biggr] f(r)
\,dr
\\
&&\qquad \leq C(u_0,v_0,\phi, \rho) \biggl(\int
_0^s f(r) \,dr \biggr)^2.
\nonumber
\end{eqnarray}
Now note that if $s \in[\frac{t}{2},t)$, we have by an explicit calculation
\[
\int_0^s f(r) \,dr = \int_0^{2s-t}
\biggl(\frac{t-s}{s-r} \biggr)^{2} \,dr + \int_{2s-t}^s
1 \,dr = 2(t-s) - \frac{(t-s)^2}{s} \leq2(t-s).
\]
On the other hand, if $s \in[0,\frac{t}{2}]$,
we find that $f(r)=1$ for all $r\in[0,s]$ and thus
\[
\int_0^s f(r) \,dr = s\le t-s.
\]
Thus in both cases we obtain from \eqref{eq:0403-3}
that \eqref{eq:0403-2}
is bounded by $4 C (t-s)^2$.

Combining the fourth moment estimates of the two terms
in~(\ref{eq:2702-1}), one can deduce that
\begin{eqnarray*}&& \E_{u_0,v_0} \biggl[ \biggl(\int
_{[0,t] \times\mathbb{R}}  S_{t-r} \phi(x) M^\sse{
\gamma}(dr,dx) - \int_{[0,s] \times
\mathbb
{R}} S_{s-r} \phi(x)
M^\sse{\gamma}(dr,dx) \biggr)^4 \biggr]
\\
&&\qquad \leq C (t-s)^2 ,
\end{eqnarray*}
confirming that the stochastic integral satisfies Kolmogorov's
tightness criterion. The proof for tightness of $\langle\phi, v^\sse
{\gamma}_t \rangle$ is analogous. Finally, noting that
\[
\E_{u_0,v_0} \bigl[\bigl\langle\Lambda_t^\sse{
\gamma}-\Lambda_s^\sse {\gamma },\phi\bigr
\rangle^2 \bigr]\le\|\phi\|^2_\infty
\E_{u_0,v_0} \biggl[ \biggl(\gamma\int_s^t
\int_\R u_r^\sse{
\gamma}(x)v_r^\sse{\gamma}(x) \,dx\,dr \biggr)^2
\biggr],
\]
tightness of $\langle\phi, \Lambda^\sse{\gamma}_t \rangle$ follows
by the same argument as in \eqref{eq:2702-2}.
\end{pf*}
%

\begin{rem}
Note that for the application of Kolmogorov's tightness criterion, it
would suffice to control $p$th moments for any $p>2$ instead of $p=4$
in the above proof. However, the duality technique only allows us to
estimate integer (mixed) moments.
This is the reason for the restriction $\rho<\rho(4)=-\frac{1}{\sqrt
{2}}$ in the above approach. We believe this restriction to be due to
the technique of the proof (duality), however, and expect the above
results to hold for \emph{all} $\rho<\rho(2)=0$.
Since our approach allows us to control second moments, we can at least
show tightness in the weaker Meyer--Zheng topology for all $\rho<0$;
see Proposition~\ref{tightness_MZ dual} below.
\end{rem}

\begin{prop}\label{tightness of processes}
Let $\rho<-\frac{1}{\sqrt{2}}$ and $(u_0, v_0 ) = (\1_{\R^-}, \1
_{\R
^+})$. Then the family $(\mu^\sse{\gamma}_t,\nu^\sse{\gamma}_t,
\Lambda
^\sse{\gamma}_t)_{ t \geq0 }$ of measure-valued processes is tight
with respect to the Skorokhod topology on the space
$\calC_{[0,\infty)}( \calM_\tem^3)$.
\end{prop}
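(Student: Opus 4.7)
The plan is to assemble the proposition directly from the two ingredients already at hand: the compact containment condition of Corollary \ref{cor:comp_cont}, and the tightness of the real-valued projections in Lemma \ref{tst_fn_tight}. The strategy is the standard lifting of tightness from a separating family of coordinate processes to tightness of the measure-valued process itself, in the spirit of Mitoma's theorem and Jakubowski's criterion (see e.g.\ the formulation used in \cite{Dawsonetal2002}, Proposition~37).

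First I would record that the approximating laws are in fact already supported on $\calC_{[0,\infty)}(\calM_\tem^3)$: the SPDE solutions $(u^\sse{\gamma}, v^\sse{\gamma})$ have continuous paths with values in $\calC_\tem$ by the existence theory for $\mathrm{cSBM}(\varrho,\gamma)_{u_0,v_0}$, and $\Lambda^\sse{\gamma}_t$ is continuous in $t$ by construction as a running time-integral. So we are working directly in the space of continuous paths, rather than needing to first show tightness in a Skorokhod space of c\`adl\`ag paths and subsequently exclude jumps. Next, I would apply the Jakubowski-type criterion: a family of laws on $\calC_{[0,\infty)}(\calM_\tem^3)$ is tight once one verifies (a) the compact containment condition on $\calM_\tem^3$ uniformly in $\gamma>0$, and (b) tightness of $(\langle \phi_1, \mu^\sse{\gamma}_\cdot\rangle, \langle \phi_2, \nu^\sse{\gamma}_\cdot\rangle, \langle \phi_3, \Lambda^\sse{\gamma}_\cdot\rangle)$ in $\calC_{[0,\infty)}(\R^3)$ for each triple $(\phi_1,\phi_2,\phi_3)$ ranging over a separating family of test functions on $\calM_\tem^3$. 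Condition (a) is exactly Corollary \ref{cor:comp_cont} (applied to all three components), and condition (b) is the content of Lemma \ref{tst_fn_tight}, since the triples $(\phi_1,\phi_2,\phi_3)$ with $\phi_i\in\bigcup_{\lambda>0}\calC_\lambda$ form a separating class on $\calM_\tem^3$.

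The only non-trivial input to both (a) and (b) has already been produced: the uniform first-moment bound of Lemma \ref{lemma uniform first moments dual process} for the compact containment, and the integrated fourth-moment estimate of Proposition \ref{mixed_moments} for the Kolmogorov-type continuity criterion. There is essentially no further obstacle; the main subtlety is purely topological, namely choosing a version of Jakubowski's criterion valid in the non-locally-compact but Polish space $\calM_\tem$ (equipped with the weighted test-function topology described in Appendix~\ref{appendix0}). Once this bookkeeping is in place the proposition is immediate, and in particular the restriction $\varrho<-1/\sqrt{2}$ enters only through Proposition \ref{mixed_moments} and its use in Lemma \ref{tst_fn_tight}.
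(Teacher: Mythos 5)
Your proposal is correct and follows essentially the same route as the paper: the paper's proof likewise invokes Jakubowski's criterion, combining the compact containment condition of Corollary~\ref{cor:comp_cont} with the coordinate-process tightness of Lemma~\ref{tst_fn_tight}, and noting that $\{\langle\phi,\cdot\rangle:\phi\in\bigcup_{\lambda>0}\calC_\lambda^+\}$ is separating for $\calM_\tem$. Your additional observation that the prelimiting paths are already continuous (so no jump-exclusion step is needed) is consistent with the paper's working directly in $\calC_{[0,\infty)}(\calM_\tem^3)$.
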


\begin{pf}
By a standard argument known as Jakubowski's criterion
(see \cite{Jak86}, Theorem~3.1 or \cite{Daw91}, Theorem~3.6.4; see
also~\cite{EK86}, Theorem~3.9.1),
tightness of the measure-valued processes
follows from tightness of the coordinate
processes together with the compact containment condition.
We have already checked the latter in Corollary~\ref{cor:comp_cont}.
Moreover, for each test function $\phi\in\bigcup_{\lambda>0}\calC
_\lambda^+$, the coordinate processes $\langle\phi, u^\sse{\gamma}_t
\rangle$, $\langle\phi, v^\sse{\gamma}_t \rangle$ and $\langle
\phi,
\Lambda^\sse{\gamma}_t \rangle$ are tight
in $\calC_{[0,\infty)}(\R)$ by Lemma~\ref{tst_fn_tight}. Since the
family of functions
$ \{\langle\phi,\cdot\rangle\dvtx\phi\in\bigcup_{\lambda
>0}\calC
_\lambda^+  \}$
is separating for $\calM_\tem$ [recall the definition of the topology
of $\calM_\tem$ in \eqref{defn:top_tem}],
an application of~\cite{Jak86}, Theorem~3.1, completes the proof.
\end{pf}

\begin{rem}
Note that the restriction to $\rho<-\frac{1}{\sqrt{2}}$ and
complementary Heaviside initial conditions in the previous proposition
comes only from Lem\-ma~\ref{tst_fn_tight} (tightness of coordinate
processes). The compact containment condition,
on the other hand, holds for \emph{all} $\rho<0$ and general initial
conditions by Corollary~\ref{cor:comp_cont}.
As a consequence,
any generalization of Lemma~\ref{tst_fn_tight} to other values of
$\rho
<0$ or to more general initial conditions would immediately result in a
corresponding strengthening of the conclusion in Proposition~\ref
{tightness of processes}.
\end{rem}

\subsection{Meyer--Zheng tightness}
The approach of the previous subsection relies heavily on the
assumption of complementary Heaviside initial conditions and
that $\rho<-\frac{1}{\sqrt{2}}$. In particular, only under those
conditions we are able to establish the fourth moment bound of
Proposition~\ref{mixed_moments}, which in turn is essential for proving
tightness in the space of continuous paths w.r.t. the Skorokhod
topology. We will see now that both assumptions can be weakened if we
consider tightness w.r.t. the weaker Meyer--Zheng ``pseudopath''
topology on the space of c\`adl\`ag paths.
This extension will be of crucial importance in the uniqueness proof in
Section~\ref{ssn:martconv} below. Indeed, in order to show uniqueness
of the limit point, we use self-duality for solutions of the martingale
problem $(\mathbf{MP'})_{\mu_0,\nu_0}^\rho$. Therefore, we have to
construct a dual process, that is, solutions to the martingale problem,
for a sufficiently rich class of rapidly decreasing initial conditions.
In particular, we will need solutions for initial conditions with
nondisjoint support.

We now show that tightness of the family $(\mu_t^\sse{\gamma},\nu
_t^\sse
{\gamma},\Lambda_t^\sse{\gamma})_{t\ge0}$ of measure-valued processes
in the Meyer--Zheng topology is a simple consequence of the estimates
already derived in Section~\ref{ssn:estimates}:

\begin{prop}\label{tightness_MZ dual}
Suppose $\rho< 0$ and $( u_0, v_0 ) \in(\calB_\rap^+)^2$ [resp.,
$(\calB_\tem^+)^2$]. Then the family of processes $(\mu_t^\sse
{\gamma
},\nu_t^\sse{\gamma},\Lambda_t^\sse{\gamma})_{t\ge0}$ from
\eqref
{defn:measure-valued processes3}--\eqref{defn:measure-valued
processes4} is tight with respect to the Meyer--Zheng topology on
$D_{[0,\infty)}(\calM_\rap^3)$ [resp., $D_{[0,\infty)}(\calM_\tem^3)$].
\end{prop}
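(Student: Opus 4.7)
My plan is to invoke Jakubowski's criterion (as in the proof of Proposition \ref{tightness of processes}, but now in the Meyer-Zheng setting, see \cite{Kurtz91}), which reduces tightness of the $\calM_\rap^3$-valued (resp.\ $\calM_\tem^3$-valued) process $(\mu^\sse{\gamma},\nu^\sse{\gamma},\Lambda^\sse{\gamma})$ in the pseudopath topology to two ingredients: (a) the compact containment condition, already at our disposal from Corollary \ref{cor:comp_cont}; and (b) tightness of the real-valued coordinate processes $\langle\phi,\mu^\sse{\gamma}\rangle$, $\langle\phi,\nu^\sse{\gamma}\rangle$, $\langle\phi,\Lambda^\sse{\gamma}\rangle$ in $D_{[0,\infty)}(\R)$ (Meyer-Zheng sense), for $\phi$ in a countable separating family drawn from $\bigcup_{\lambda>0}\calC_{-\lambda}^+$ (resp.\ $\bigcup_{\lambda>0}\calC_\lambda^+$).

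For (b) I would appeal to the classical Meyer-Zheng tightness criterion \cite[Thm.~4]{MZ84} (cf.\ \cite[Thm.~1.4]{Kurtz91}): a family $(X^n)$ of adapted real-valued c\`adl\`ag processes is tight in the pseudopath topology provided
$$\sup_n \E\Big[\sup_{0\le t\le T}|X^n_t|\Big] + \sup_n V_T(X^n) < \infty, \qquad T>0,$$
where $V_T(X^n):=\sup_\pi \sum_i \E\big|\E[X^n_{t_{i+1}}-X^n_{t_i}\mid\calF_{t_i}]\big|$ denotes the conditional variation over partitions $\pi$ of $[0,T]$. The uniform supremum bounds follow at once from Lemma \ref{lemma uniform first moments dual process}, so the real task is to control the conditional variation uniformly in $\gamma>0$.

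For $\langle\phi,\mu_\cdot^\sse{\gamma}\rangle$ (and by symmetry $\langle\phi,\nu_\cdot^\sse{\gamma}\rangle$) I would use the Green's function representation \eqref{MP1a dual} to decompose
$$\langle\phi,\mu_t^\sse{\gamma}\rangle = \langle u_0,S_t\phi\rangle + M_t^\sse{\gamma}(\phi),$$
where $M^\sse{\gamma}(\phi)$ is a martingale and hence contributes zero to $V_T$, while the deterministic part $t\mapsto\langle u_0,S_t\phi\rangle$ is $C^1$ on $(0,\infty)$ with time derivative $\tfrac12\langle u_0,\Delta S_t\phi\rangle$, and so is of bounded total variation on $[0,T]$ uniformly in $\gamma$. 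For $\langle\phi,\Lambda_\cdot^\sse{\gamma}\rangle$ the situation is even cleaner: with $\phi\ge0$ the process is non-decreasing, so $V_T(\langle\phi,\Lambda_\cdot^\sse{\gamma}\rangle)=\E\langle\phi,\Lambda_T^\sse{\gamma}\rangle$, which is uniformly bounded by \eqref{uniform first moments dual process 2}. Signed test functions are handled by splitting $\phi=\phi^+-\phi^-$.

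There is no genuine obstacle to overcome here: the entire argument rests on first-moment bounds and on the Green's function decomposition already in hand from Subsection \ref{ssn:estimates}, with no Kolmogorov-type increment estimate needed. This softness is precisely the payoff of switching from Skorokhod to Meyer-Zheng, and is what allows us to drop both the restriction $\rho<-\tfrac{1}{\sqrt{2}}$ and the complementary Heaviside assumption that were indispensable in Proposition \ref{tightness of processes}.
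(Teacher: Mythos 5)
Your proposal is correct and follows essentially the same route as the paper: both reduce the problem via \cite[Cor.~1.4]{Kurtz91} to the compact containment condition of Corollary~\ref{cor:comp_cont} plus the Meyer--Zheng criterion for the coordinate processes, and both verify the latter by splitting $\langle\phi,\mu^\sse{\gamma}_t\rangle$ via the Green function representation \eqref{MP1a dual} into a martingale plus the finite-variation drift $t\mapsto\langle u_0,S_t\phi\rangle$, using Lemma~\ref{lemma uniform first moments dual process} for the uniform first-moment bounds and monotonicity for $\Lambda^\sse{\gamma}$. No substantive difference.
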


\begin{pf}
Suppose $( u_0, v_0 ) \in(\calB_\rap^+)^2$.
We aim at applying \cite{Kurtz91}, Corollary~1.4, which requires us to
check the Meyer--Zheng tightness condition [see, e.g., \eqref{MZ
condition} in the \hyperref[app]{Appendix}] for the coordinate
processes plus a compact
containment condition. Let $\phi\in\calC_\tem^+$, and fix $T>0$.

For $(\langle\phi, u^\sse{\gamma}_t \rangle)_{t\ge0}$, in view of
\eqref{MP1a dual}
and since $t\mapsto\langle u_0,S_t\phi\rangle$ has finite variation
on $[0,T]$,
checking the Meyer--Zheng condition \eqref{MZ condition} amounts to
showing that
\[
\sup_{\gamma>0}\sup_{t\in[0,T]}\E_{ u_0, v_0}
\bigl[\bigl\langle\phi, u^\sse {\gamma}_t\bigr\rangle
\bigr]<\infty,
\]
which is, however, implied immediately by Lemma~\ref{lemma uniform
first moments dual process}.
The same argument works for $(\langle\phi, v^\sse{\gamma}_t\rangle
)_{t\ge0}$. For the increasing process $t\mapsto\langle\phi,
\Lambda
^\sse{\gamma}_t \rangle$, condition \eqref{MZ condition} reduces to
\[
\sup_{\gamma>0}\E_{ u_0, v_0} \bigl[\bigl\langle\phi,
\Lambda^\sse {\gamma }_T\bigr\rangle \bigr]<\infty,
\]
which is also ensured by Lemma~\ref{lemma uniform first moments dual process}.

This shows that the Meyer--Zheng tightness criterion is satisfied for
the coordinate processes.

The compact containment condition has already been checked in Corollary~\ref{cor:comp_cont}. Applying \cite{Kurtz91}, Corollary~1.4, we are done.
The proof for initial conditions in $(\calB_\tem^+)^2$ is completely
analogous.
\end{pf}
%

\section{Properties of limit points}
\label{sn:properties}
Having established tightness of our family \eqref{defn:measure-valued
processes3}--\eqref{defn:measure-valued processes4} of measure-valued
processes on path space,
we turn to the investigation of the properties of limit points in the
respective topologies. Our starting point is the observation that each
limit point w.r.t. the Skorokhod topology on $\calC$ satisfies the
martingale problem $(\mathbf{MP})_{\mu_0,\nu_0}^\rho$ from
Definition~\ref{defn:MP}. This implies in
particular
the absolute continuity of the limit measures which is part of our main
result Theorem~\ref{thmm:main2}.
We will also see that limit points w.r.t. the (weaker) Meyer--Zheng
topology still satisfy the (weaker) martingale problem $(\mathbf
{MP'})_{\mu_0,\nu_0}^\rho$ from Definition~\ref{defn:MP'}, which will
be used in the proof of self-duality and uniqueness later on.

The second fundamental observation is the fact that each Meyer--Zheng
limit point has the ``separation of types'' property \eqref{singularity
1} (see Lemma~\ref{lemma:sep} below), which will allow us to prove
self-duality and uniqueness without having to specify the quadratic
variation of the limit martingales in the martingale problem $(\mathbf
{MP})_{\mu_0,\nu_0}^\rho$. For
Skorokhod limit points, this will also imply the separation of types in
the more intuitive sense \eqref{singularity1}.

Suppose $(\mu_t,\nu_t,\Lambda_t)_{t\ge0}$ is a limit point of the
family \eqref{defn:measure-valued processes3}--\eqref
{defn:measure-valued processes4} of measure-valued processes.
[Recall that by Proposition~\ref{tightness of processes}, such a limit
point exists under complementary Heaviside initial conditions
$(u_0,v_0)=(\1_{\R^-},\1_{\R^+})$ whenever $\rho<-\frac{1}{\sqrt{2}}$.]
By the definition of the finite rate symbiotic branching model
${\operatorname
{cSBM}(\varrho,\gamma)}_{u_0, v_0}$, we know that for every $\gamma>0$,
$(\mu_t^\sse{\gamma},\nu_t^\sse{\gamma})_{t\ge0}$ is a solution
to the
martingale problem $(\mathbf{MP})_{\mu_0,\nu_0}^\rho$, with the
covariation structure in \eqref
{Cov1} given by the measure $\Lambda^\sse{\gamma}$ from \eqref
{defn:measure-valued processes4}. Thus it comes as no surprise that the
limit point $(\mu,\nu)$ of $(\mu^\sse{\gamma},\nu^\sse{\gamma})$
satisfies the same martingale problem,
with the covariation
now controlled by the limit point $\Lambda$ of $\Lambda^\sse{\gamma}$:

\begin{prop}\label{MPinf 1}
Let $\rho<0$ and $( u_0, v_0)\in(\calB_\tem^+)^2$ [resp., $(\calB
_\rap^+)^2$].
If $(\mu_t,\nu_t,\Lambda_t)_{t\ge0}\in\calC_{[0,\infty)}(\calM
_{\tem
}^3)$ [resp., $\calC_{[0,\infty)}(\calM_{\rap}^3)$] is a limit point
with respect to the Skorokhod topology\vspace*{1pt} of the family $(\mu^\sse
{\gamma
}_t,\nu^\sse{\gamma}_t,\Lambda^\sse{\gamma}_t)_{t\ge0}$, $\gamma>0$,
then $(\mu_t,\nu_t)_{t\ge0}$ satisfies the martingale problem
$(\mathbf
{MP})_{ u_0, v_0}^\rho$ with the covariation structure in \eqref{Cov1}
being given by the process $(\Lambda_t)_{t\ge0}$.
\end{prop}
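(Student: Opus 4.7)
The plan is to pass the prelimit martingale problem structure through the limit, exploiting Skorokhod's representation theorem together with the uniform moment bounds already established in Section~\ref{ssn:estimates}. I would first invoke Skorokhod's representation along a convergent subsequence $\gamma_n \to \infty$, so that on a suitable probability space the convergence $(\mu^\sse{\gamma_n},\nu^\sse{\gamma_n},\Lambda^\sse{\gamma_n}) \to (\mu,\nu,\Lambda)$ holds almost surely in $\calC_{[0,\infty)}(\calM_\tem^3)$, i.e.\ locally uniformly in $t$. For any test function $\phi\in\calC_\rap^{(2)}$ the pairings $\langle \cdot,\phi\rangle$, $\langle\cdot,\Delta\phi\rangle$, $\langle\cdot,\phi^2\rangle$ are continuous functionals on $\calM_\tem$ (cf.\ the appendix), and therefore the prelimit martingales
\[
M^\sse{\gamma_n}(\phi)_t = \langle\mu^\sse{\gamma_n}_t,\phi\rangle - \langle u_0,\phi\rangle - \tfrac{1}{2}\int_0^t \langle\mu^\sse{\gamma_n}_s,\Delta\phi\rangle\,ds
\]
(given by the Green's function representation, Corollary~\ref{cor:Green function representation}) converge almost surely locally uniformly to $M(\phi)$, and likewise $N^\sse{\gamma_n}(\phi)\to N(\phi)$ and $\langle\Lambda^\sse{\gamma_n}_\cdot,\phi^2\rangle\to\langle\Lambda_\cdot,\phi^2\rangle$.

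Next, I would establish the martingale property of $M(\phi)$ and $N(\phi)$. Each prelimit is a continuous square-integrable martingale with quadratic variation $\langle\Lambda^\sse{\gamma_n}_t,\phi^2\rangle$. Doob's $L^2$ inequality combined with Lemma~\ref{lemma boundedness quadratic variation} yields the uniform bound $\sup_n \E[\sup_{s\le t}(M^\sse{\gamma_n}(\phi)_s)^2] < \infty$, providing uniform integrability at each fixed time. One may then pass to the limit in the defining identity
\[
\E\bigl[(M^\sse{\gamma_n}(\phi)_t - M^\sse{\gamma_n}(\phi)_s)\,H(\mu^\sse{\gamma_n},\nu^\sse{\gamma_n},\Lambda^\sse{\gamma_n}|_{[0,s]})\bigr]=0
\]
for bounded continuous $H$, which identifies $M(\phi)$ (and analogously $N(\phi)$) as a continuous square-integrable martingale, null at zero, adapted to the canonical filtration of $(\mu,\nu,\Lambda)$. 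Path continuity is inherited from the limit being in $\calC_{[0,\infty)}(\calM_\tem^3)$.

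The main obstacle is the identification of the quadratic (co-)variation structure~\eqref{Cov1}. I would exploit that
\[
M^\sse{\gamma_n}(\phi)_t^2 - \langle\Lambda^\sse{\gamma_n}_t,\phi^2\rangle, \qquad M^\sse{\gamma_n}(\phi)_t\, N^\sse{\gamma_n}(\phi)_t - \rho\langle\Lambda^\sse{\gamma_n}_t,\phi^2\rangle
\]
are prelimit martingales converging almost surely locally uniformly to the candidate limits. Passing the martingale property requires uniform integrability of these quantities at each fixed $t$, equivalently uniform $L^{2+\varepsilon}$-control of $M^\sse{\gamma_n}(\phi)_t$, which by BDG reduces to a uniform $L^{1+\varepsilon/2}$-bound on $\langle\Lambda^\sse{\gamma_n}_t,\phi^2\rangle$. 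In the diffusively rescaled complementary Heaviside setting with $\rho<-\tfrac{1}{\sqrt{2}}$ this extra integrability is supplied precisely by the fourth-moment estimate of Proposition~\ref{mixed_moments}, which (via Fubini and the scaling relation~\eqref{scaling property}) yields $\sup_n \E[\langle\Lambda^\sse{\gamma_n}_t,\phi^2\rangle^2]<\infty$ and hence uniform $L^4$-bounds on $M^\sse{\gamma_n}(\phi)_t$. For initial conditions in $(\calB_\tem^+)^2$ or $(\calB_\rap^+)^2$ and general $\rho<0$, the same fourth-moment/polarisation inputs that go into the hypothesis ``$(\mu,\nu,\Lambda)\in\calC$'' (via Kolmogorov-type tightness criteria) should deliver the required integrability. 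With uniform integrability in hand one passes the identities
$\E[(M^\sse{\gamma_n}(\phi)_t^2 - \langle\Lambda^\sse{\gamma_n}_t,\phi^2\rangle - M^\sse{\gamma_n}(\phi)_s^2 + \langle\Lambda^\sse{\gamma_n}_s,\phi^2\rangle)\,H(\cdot|_{[0,s]})]=0$ (and its $MN$ analogue) to the limit, obtaining $[M(\phi)]_t = [N(\phi)]_t = \langle\Lambda_t,\phi^2\rangle$ and $[M(\phi),N(\phi)]_t = \rho\langle\Lambda_t,\phi^2\rangle$, which completes the verification of~\MP.
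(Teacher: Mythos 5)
Your overall skeleton (extract a convergent subsequence, deduce convergence of the coordinate processes $M^\sse{\gamma_n}(\phi)$, $N^\sse{\gamma_n}(\phi)$, $\langle\Lambda^\sse{\gamma_n},\phi^2\rangle$, then use uniform integrability to pass the martingale property to the limit) matches the paper's proof, and your $L^2$-bound via Lemma~\ref{lemma boundedness quadratic variation} for that first step is fine. The problem is in the identification of the covariance structure~\eqref{Cov1}. You correctly diagnose that passing the martingale property of $M^\sse{\gamma_n}(\phi)^2-\langle\Lambda^\sse{\gamma_n},\phi^2\rangle$ through the limit needs uniform integrability of $M^\sse{\gamma_n}(\phi)_t^2$, i.e.\ a uniform $L^{1+\varepsilon}$-bound on $\langle\Lambda_t^\sse{\gamma_n},\phi^2\rangle$. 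But such a bound is available in this paper only for complementary Heaviside initial conditions and $\rho<-\frac{1}{\sqrt{2}}$ (Proposition~\ref{mixed_moments}), whereas the proposition is stated for all $\rho<0$ and all $(u_0,v_0)\in(\calB_\tem^+)^2$ (resp.\ $(\calB_\rap^+)^2$). Your fallback, that ``the same fourth-moment/polarisation inputs that go into the hypothesis $(\mu,\nu,\Lambda)\in\calC$ should deliver the required integrability,'' does not hold up: the proposition is purely conditional on the \emph{existence} of a continuous limit point, and that hypothesis carries no quantitative moment information about $\Lambda^\sse{\gamma_n}$. In the general case the paper only controls \emph{first} moments of $\langle\Lambda_t^\sse{\gamma_n},\phi^2\rangle$ (Lemma~\ref{lemma uniform first moments dual process}), so your route genuinely fails there.

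The paper closes this gap differently: having established that the limits $M(\phi),N(\phi)$ are continuous square-integrable martingales (using only the first-moment bound on the bracket, via BDG and Jensen with an exponent $1<p\le 2$), it invokes \cite[Thm.~12]{MZ84}, which asserts that the quadratic (co)variations converge along with the convergent sequence of martingales to those of the limit. This transfers the identities $[M^\sse{\gamma_n}(\phi)]_t=\langle\Lambda_t^\sse{\gamma_n},\phi^2\rangle$ and $[M^\sse{\gamma_n}(\phi),N^\sse{\gamma_n}(\phi)]_t=\rho\langle\Lambda_t^\sse{\gamma_n},\phi^2\rangle$ directly to the limit without any moment bound on the bracket beyond the one already used for the martingale property. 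To repair your argument you would either need to cite such a stability result for quadratic variations, or restrict the claim to the parameter range where the fourth-moment estimate is actually proved.
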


\begin{pf}
We give the proof for $( u_0, v_0)\in(\calB_\tem^+)^2$, the proof for
initial conditions in $\calB_\rap^+$ being completely analogous.

Consider a sequence $\gamma_k\uparrow\infty$ such that
\[
\bigl(\mu^\sse{\gamma_k}_t,
\nu_t^\sse{\gamma_k},\Lambda_t^\sse
{\gamma _k}\bigr)_{t\ge0}\mathop{\xrightarrow}_{k\to\infty}^{\calL}(
\mu_t,\nu_t,\Lambda _t)_{t\ge0}
\]
in $\calC_{[0,\infty)}(\calM_{\tem}^3)$. Let $\phi\in\calC_{\rap
}^{(2)}$.
Then we have also
\[
\bigl(M_t^\sse{\gamma_k}(
\phi),N_t^\sse{\gamma_k}(\phi),\Lambda
_t^\sse {\gamma_k}\bigl(\phi^2
\bigr) \bigr)_{t\ge0}\mathop{\xrightarrow}_{k\to\infty}^{\calL }
\bigl(M_t(\phi),N_t(\phi),\Lambda_t\bigl(
\phi^2\bigr) \bigr)_{t\ge0}
\]
in $\calC_{[0,\infty)}(\R^3)$, where $(M^\sse{\gamma_k}(\phi
),N^\sse
{\gamma_k}(\phi))$ and $(M(\phi),N(\phi))$ denote the pairs of
processes from \eqref{MP1} corresponding to $(\mu^\sse{\gamma
_k},\nu
^\sse{\gamma_k})$ and $(\mu,\nu)$, respectively. We already know that
$M^\sse{\gamma_k}(\phi)$ and $N^\sse{\gamma_k}(\phi)$ are martingales.
In order for the weak limit $(M(\phi)$, $N(\phi))$ to be again a
martingale, it suffices to show that $(M_t^\sse{\gamma_k}(\phi
))_{k\in\N
}$ and $(N_t^\sse{\gamma_k}(\phi))_{k\in\N}$ are uniformly integrable
for every fixed $t$; see, e.g., \cite{MZ84}, Theorem~11).
Using the Burkholder--Davis--Gundy and Jensen inequalities as well as
Lemma~\ref{lemma uniform first moments dual process}, we obtain for
every $1<p\le2$ that
\begin{eqnarray*}
\sup_{k\in\N}\E \bigl[\bigl|M_t^\sse{
\gamma_k}(\phi)\bigr|^p \bigr]&\le& C_p \sup
_{k\in\N}\E \bigl[ \bigl(\bigl[ M^\sse{
\gamma_k}(\phi), M^\sse{\gamma _k}(\phi )
\bigr]_t \bigr)^{p/2} \bigr]
\\
&\le& C_p \sup_{k\in\N} \bigl(\E \bigl[\bigl[
M^\sse{\gamma_k}(\phi), M^\sse {
\gamma_k}(\phi)\bigr]_t \bigr] \bigr)^{p/2}
\\
&=& C_p \sup_{k\in\N} \bigl(\E \bigl[ \bigl\langle
\Lambda^\sse{\gamma _k}_t,\phi ^2
\bigr\rangle \bigr] \bigr)^{p/2} <\infty.
\end{eqnarray*}
An analogous assertion holds for $N^\sse{\gamma_k}(\phi)$. Hence the
weak limit $(M(\phi),N(\phi))$ is again a martingale.

The quadratic (co)variation converges along with the sequence of
martingales to the quadratic (co)variation of the limit martingales
(see, e.g., \cite{MZ84}, Theorem~12).
Thus identity \eqref{Cov1} on the covariation structure of the limit
martingales follows directly from the corresponding identity for the
finite rate model, which completes our proof.
\end{pf}

The fact that limit points w.r.t. the Skorokhod topology satisfy the
martingale problem $(\mathbf{MP})_{\mu_0,\nu_0}^\rho$ has important
consequences: Namely, they also
satisfy the (weaker) martingale problem $(\mathbf{MP'})_{\mu_0,\nu
_0}^\rho$,
which will be of crucial importance in the uniqueness proof in
Section~\ref{ssn:martconv} below. Also, they admit a similar Green function
representation as for the finite rate symbiotic branching model.
Since these properties are true of \emph{any} solution to the
martingale problem $(\mathbf{MP})_{\mu_0,\nu_0}^\rho$, not just
limit points of our family of
processes, and since the methods to prove them are standard, we have
decided put the corresponding proofs into Appendix~\ref{appendix1}; see
Lemma~\ref{extended martingale problem} and Corollary~\ref{cor MPinf}.
At this point, we only prove the absolute continuity of the limit
measures which is part of our main result Theorem~\ref{thmm:main2}.
This is in fact also true for any solution to the martingale problem
$(\mathbf{MP})_{\mu_0,\nu_0}^\rho$ and is a simple consequence of a
general criterion for absolute
continuity due to \cite{Dawsonetal2002a}:

\begin{prop}[(Absolute continuity)]\label{prop:AC}
Let $\varrho\in(-1,0]$, and
suppose $(\mu_t,\nu_t)_{t\ge0}$
is any solution to the martingale problem $(\mathbf{MP})_{\mu_0,\nu
_0}^\rho$. Then for each fixed
$t>0$, the measures $\mu_t$ and $\nu_t$ are absolutely continuous
w.r.t. Lebesgue measure, $\p_{\mu_0,\nu_0}$-a.s.
\end{prop}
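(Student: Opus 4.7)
The plan is to verify the hypotheses of the general absolute continuity criterion for non-negative random measures due to \cite{Dawsonetal2002a}, which reduces the claim to a uniform-in-$\eps$ bound on the second moment of the heat-mollified measure $S_\eps\mu_t(x) := \langle \mu_t, p_\eps(x-\cdot)\rangle$, integrated against a suitable positive weight.

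The first step will invoke the Green function representation derived in Corollary~\ref{cor:Green function representation} for any solution of the martingale problem $\MP$. Taking the test function $\phi = p_\eps(x-\cdot) \in \calC_\rap^{(2)}$ and using the identity $S_{t-s}\, p_\eps(x-\cdot)(y) = p_{t+\eps-s}(x-y)$ gives
\[
S_\eps\mu_t(x) = S_{t+\eps}\mu_0(x) + \int_0^t\!\!\int_\R p_{t+\eps-s}(x-y)\, M(ds,dy),
\]
where $M$ denotes the worthy martingale measure associated with $\mu$, of covariance intensity $\Lambda$. The Walsh $L^2$-isometry then produces
\[
\E\big[(S_\eps\mu_t(x))^2\big] = \big(S_{t+\eps}\mu_0(x)\big)^2 + \E\int_0^t\!\!\int_\R p_{t+\eps-s}^2(x-y)\, \Lambda(ds,dy).
\]
Integrating in $x$ against a positive weight $\phi_\lambda \in \calC_\tem$, and using the elementary identity $p_u^2(z) = (2\sqrt{\pi u})^{-1}p_{u/2}(z)$, Fubini reduces matters to bounding uniformly in $\eps$ the quantity
\[
\E\int_0^t \frac{1}{2\sqrt{\pi(t+\eps-s)}}\, \big\langle \Lambda(ds), S_{(t+\eps-s)/2}\phi_\lambda\big\rangle.
\]

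The second step controls this $\Lambda$-expectation. The fundamental martingale identity $\E\langle\Lambda_t,\psi^2\rangle = \E[M(\psi)_t^2]$ implicit in~\eqref{Cov1}, combined with the assumption $\rho \in (-1,0]$ and Theorem~\ref{thm:mc}, ensures that all relevant second moments $\E[\langle\mu_s, \psi\rangle^2]$ remain finite and locally bounded on $[0,t]$, providing the needed control on the intensity $\E\Lambda$. The semigroup comparison estimates of Lemma~\ref{lemma estimates} then allow one to replace $S_{(t+\eps-s)/2}\phi_\lambda$ by a fixed tempered weight (up to an absolute constant), so that the integrable singularity $(t+\eps-s)^{-1/2}$ at the terminal time causes no difficulty. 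Once this uniform second moment bound is established, the criterion of \cite{Dawsonetal2002a} immediately delivers $\p$-a.s.\ absolute continuity of $\mu_t$; the statement for $\nu_t$ follows by the symmetry of the martingale problem in $\mu$ and $\nu$.

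The main obstacle is the translation between the squared-test-function control on $\Lambda$ provided by \eqref{Cov1} and the bare intensity bound $\E\langle\Lambda(ds), \phi_\lambda\rangle$ actually needed: one must carefully choose $\phi_\lambda = \psi^2$ with $\psi \in \calC_\rap^{(2)}$ and invoke the martingale identity with test function $\psi$. The hypothesis $\rho \leq 0$ is essential here, since for $\rho > 0$ the second moments of the symbiotic branching model diverge (Theorem~\ref{thm:mc}) and no analogous uniform estimate is available, which accounts for the range $\rho \in (-1,0]$ stated in the proposition.
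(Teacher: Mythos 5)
Your Step 1 (Green function representation plus the Walsh isometry, reducing absolute continuity to a uniform-in-$\eps$ bound on $\E\int(S_\eps\mu_t(x))^2\phi_\lambda(x)\,dx$) is sound, and is indeed the engine inside the criterion of \cite{Dawsonetal2002a}. The genuine gap is in your Step 2, where you assert that
\[
\E\int_0^t \frac{1}{2\sqrt{\pi(t+\eps-s)}}\,\bigl\langle \Lambda(ds), S_{(t+\eps-s)/2}\phi_\lambda\bigr\rangle
\]
is bounded uniformly in $\eps$. The proposition concerns an \emph{arbitrary} solution of $\MP$, for which $\Lambda$ is an unspecified part of the solution; Theorem~\ref{thm:mc} is not applicable, since it is a moment bound for the finite-rate model with constant initial conditions, not for a generic solution of the abstract martingale problem (such as the scaling limit). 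What the definition of $\MP$ does provide, via square-integrability of $M(\phi)$, is $\E\langle\Lambda_t,\phi^2\rangle<\infty$ for each fixed $t$; but this gives no control on how the temporal marginal of $\E\Lambda$ distributes its mass over $[0,t]$. If $\E\Lambda(ds,dx)$ concentrates near $s=t$ (say, has an atom in $s$ at $t$), the displayed quantity blows up like $\eps^{-1/2}$ and your argument collapses. Controlling exactly this term is the whole difficulty, and nothing in your write-up supplies it. Your diagnosis of where $\rho\le 0$ enters is also off target: the dichotomy in Theorem~\ref{thm:mc} concerns uniformity in $t$, not finiteness of second moments at a fixed $t$, so the sign of $\rho$ cannot be what makes a fixed-time second-moment argument work.

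The paper sidesteps the problem by a structural trick: set $\tilde\mu_t:=\mu_t$ and $\tilde\nu_t:=\frac{1}{\sqrt{1-\rho^2}}(\nu_t-\rho\mu_t)$. For $\rho\le 0$ both components remain non-negative measures --- this is the actual role of the hypothesis $\varrho\in(-1,0]$ --- and by \eqref{Cov1} the Green-function martingales $\langle\tilde\mu_t,S_{T-t}\phi\rangle$ and $\langle\tilde\nu_t,S_{T-t}\phi\rangle$ are \emph{orthogonal} with identical quadratic variation. Theorem~57 of \cite{Dawsonetal2002a} is tailored to precisely this situation: orthogonality makes $\E[S_\eps\tilde\mu_T(x)\,S_\eps\tilde\nu_T(x)]$ computable from the initial data alone, and together with non-negativity and equality of the quadratic variations this is what delivers the uniform $L^2$ bound you were aiming for, with no a priori control on $\Lambda$ required. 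To complete your route you would need to reproduce this decorrelation step (or an equivalent device), rather than appeal to moment bounds for the finite-rate model.
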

\begin{pf}
Fix $T>0$.
Using the same transformation as in \cite{DM12}, page 24, we define
%
\begin{equation}
\label{transformation}\tilde\mu_t:=\mu_t,\qquad \tilde \nu
_t:=\frac{1}{\sqrt{1-\rho^2}}(\nu_t-\rho\mu_t).
\end{equation}
Then $(\tilde\mu_t,\tilde\nu_t)_{t\in[0,T]}$ is a continuous
$\calM^2$-valued process, where $\calM$ denotes the space of Radon
measures on $\R$ (note that $\rho\le0$). Using the Green function
representation of $(\mathbf{MP})_{\mu_0,\nu_0}^\rho$ from Corollary~\ref{cor:Green function
representation}, it is easily checked that for all nonnegative test
functions $0\le\phi\in\calC_c^\infty$, the processes
\[
\tilde M(\phi)_t:=\langle\tilde\mu_t,S_{T-t}
\phi\rangle,\qquad \tilde N(\phi )_t:=\langle\tilde\nu_t,S_{T-t}
\phi\rangle,\qquad t\in[0,T]
\]
are martingales
with covariance structure
\[
\bigl[\tilde M(\phi), \tilde M(\phi)\bigr]_t = \bigl[ \tilde N(
\phi), \tilde N(\phi )\bigr]_t,\qquad \bigl[\tilde M(\phi), \tilde N(\phi)
\bigr]_t=0,\qquad t\in[0,T].
\]
Applying Theorem~57 in \cite{Dawsonetal2002a}, we get a.s. absolute
continuity of $\tilde\mu_T$ and $\tilde\nu_T$. Thus the same holds for
$\mu_T=\tilde\mu_T$ and $\nu_T=\rho\tilde\mu_T+\sqrt{1-\rho
^2}\tilde
\nu_T$.
\end{pf}

We remind the reader of our convention to use the same symbol for an
absolutely continuous measure and its density. Thus if $(\mu_t,\nu
_t)_{t\ge0}$ is any limit point of the family \eqref
{defn:measure-valued processes3},
we will write
\[
\mu_t(dx)=\mu_t(x) \,dx,\qquad \nu_t(dx)=
\nu_t(x) \,dx.
\]
Note, however, that although $\mu_t$ and $\nu_t$ are (as \emph
{measures}) elements of the space $\calM_{\tem}$ respectively $\calM
_\rap$, their \emph{densities} have no reason to be elements of the
function space $\calB_{\tem}$, respectively $\calB_\rap$, let alone
$\calC_\tem$, respectively $\calC_\rap$, as is the case for solutions
to the finite rate symbiotic branching model ${\operatorname{cSBM}(\varrho
,\gamma)}_{u_0, v_0}$.

We now turn to limit points with respect to the Meyer--Zheng topology.
It would be nice to prove that every Meyer--Zheng limit point satisfies
also the martingale problem $(\mathbf{MP})^\rho_{ u_0, v_0}$,
but unfortunately we have been unable to show an analogue of
Proposition~\ref{MPinf 1} for the Meyer--Zheng topology.\footnote{As a
consequence, we also cannot show a Green function representation or
absolute continuity of the limit measures $\mu_t$ and $\nu_t$ for fixed
$t$.} The reason is the following: While in that case we can still
apply \cite{MZ84}, Theorem~11, in order to show that the weak limit of
the approximating martingales is again a martingale,
it is no longer clear that the Meyer--Zheng limit $\Lambda$ of the
quadratic variation processes $\Lambda^\sse{\gamma}$ coincides with the
quadratic variation of the limit martingales. (In order to apply \cite{MZ84},
Theorem~12, we would have to know that $\Lambda$ is continuous.)

However, we can still prove that any Meyer--Zheng limit point of the
family \eqref{defn:measure-valued processes3}--\eqref
{defn:measure-valued processes4} satisfies the weaker martingale
problem $(\mathbf{MP'})_{\tilde u_0,\tilde v_0}^\rho$ of Definition~\ref
{defn:MP'}:

\begin{prop}\label{MPinf 2}
Let $\rho<0$ and $( u_0, v_0)\in(\calB_\rap^+)^2$ [resp., $(\calB
_\tem^+)^2$].
If $(\mu_t,\nu_t,\Lambda_t)_{t\ge0}\in D_{[0,\infty)}(\calM_{\rap}^3)$
[resp., $D_{[0,\infty)}(\calM_{\tem}^3)$] is any limit point with
respect to the Meyer--Zheng topology of the family $(\mu^\sse{\gamma
}_t,\nu^\sse{\gamma}_t,\Lambda^\sse{\gamma}_t)_{t\ge0}$, $\gamma>0$,
then $(\mu_t,\nu_t)_{t\ge0}$ solves the martingale problem $(\mathbf
{MP'})_{ u_0, v_0}^\rho$, with the process $(\Lambda_t)_{t\ge0}$
satisfying the requirements of Definition~\ref{defn:MP'}.
\end{prop}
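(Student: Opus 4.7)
By Corollary~\ref{cor MPinf}, each finite-rate triple $(\mu^\sse{\gamma},\nu^\sse{\gamma},\Lambda^\sse{\gamma})$ already solves $(\mathbf{MP'})^\rho_{u_0,v_0}$, so for every admissible test pair $(\phi,\psi)$ the process
\[
Z^\sse{\gamma}_t := F(\mu^\sse{\gamma}_t,\nu^\sse{\gamma}_t,\phi,\psi) - F(u_0,v_0,\phi,\psi) - \tfrac{1}{2}\int_0^t F(\mu^\sse{\gamma}_s,\nu^\sse{\gamma}_s,\phi,\psi)\,\langle\langle\mu^\sse{\gamma}_s,\nu^\sse{\gamma}_s,\Delta\phi,\Delta\psi\rangle\rangle_\rho\,ds - 4(1-\rho^2)\int_{[0,t]\times\R} F(\mu^\sse{\gamma}_s,\nu^\sse{\gamma}_s,\phi,\psi)\,\phi(x)\psi(x)\,\Lambda^\sse{\gamma}(ds,dx)
\]
is a martingale under its natural filtration. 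My plan is to fix a Meyer-Zheng convergent subsequence $\gamma_k\uparrow\infty$ with limit $(\mu,\nu,\Lambda)$, show that $Z^\sse{\gamma_k}_t \to Z_t$ (where $Z$ is obtained by the same formula with $(\mu,\nu,\Lambda)$) for enough times and in a sufficiently strong sense, and then invoke the Meyer-Zheng martingale-limit theorem of \cite{MZ84} (their Theorem~11) to conclude that $Z$ is again a martingale.

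\textbf{Pointwise convergence at good times.} Passing to the Skorokhod representation realises the Meyer-Zheng convergence almost surely. Since pseudo-path convergence is precisely convergence in Lebesgue measure of the trajectories, Fubini's theorem produces a deterministic Lebesgue-null set $N\subset\R_+$ such that at any finite collection of times drawn from $\R_+\setminus N$ the corresponding finite-dimensional vectors $(\mu^\sse{\gamma_k}_{s_i},\nu^\sse{\gamma_k}_{s_i},\Lambda^\sse{\gamma_k}_{s_i})_i$ converge almost surely to those of the limit. For $t\in\R_+\setminus N$ I check $Z^\sse{\gamma_k}_t\to Z_t$ in distribution term by term. The first term converges by continuity of $F$ in its measure arguments. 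Since $\phi,\psi\ge 0$ imply $|F(\mu,\nu,\phi,\psi)| = \exp(-\sqrt{1-\rho}\,\langle\mu+\nu,\phi+\psi\rangle)\le 1$, dominated convergence together with the uniform $L^1$-bound on $\langle\mu^\sse{\gamma_k}_s+\nu^\sse{\gamma_k}_s,|\Delta\phi|+|\Delta\psi|\rangle$ from Lemma~\ref{lemma uniform first moments dual process} handles the $ds$-integral. For the $\Lambda$-integral I set $A^\sse{\gamma}_s:=\langle\phi\psi,\Lambda^\sse{\gamma}_s\rangle$, which is continuous and increasing in $s$, so that the integral becomes the scalar Stieltjes integral $\int_0^t F(\mu^\sse{\gamma}_s,\nu^\sse{\gamma}_s,\phi,\psi)\,dA^\sse{\gamma}_s$; I approximate this by Riemann sums based on partitions with meshpoints in $\R_+\setminus N$, pass to the limit summand-by-summand using joint convergence at good times, and control the mesh-refinement error uniformly in $k$ via $|F|\le 1$ and the uniform bound $\sup_k\E[A^\sse{\gamma_k}_t]<\infty$ of Lemma~\ref{lemma uniform first moments dual process}.

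\textbf{Uniform integrability, martingale property, and main obstacle.} Combining $|F|\le 1$ with the first-moment bounds of Lemma~\ref{lemma uniform first moments dual process} and the $L^2$-estimates on $\langle\mu^\sse{\gamma_k}_t,\phi\rangle$ and $\langle\nu^\sse{\gamma_k}_t,\phi\rangle$ obtained from the martingale representation \eqref{MP1a dual}--\eqref{Cov1a dual} via the Burkholder-Davis-Gundy inequality (as in the proof of Proposition~\ref{MPinf 1}) yields $\sup_k\E[|Z^\sse{\gamma_k}_t|^{1+\varepsilon}]<\infty$ for a suitable $\varepsilon>0$ and every $t\in\R_+\setminus N$, which in particular gives uniform integrability of $\{Z^\sse{\gamma_k}_t\}_k$. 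Fixing $s_1<\cdots<s_n<r<t$ in $\R_+\setminus N$ and a bounded continuous test functional $h$, I pass to the limit in the identity
\[
\E\big[(Z^\sse{\gamma_k}_t-Z^\sse{\gamma_k}_r)\,h(\mu^\sse{\gamma_k}_{s_1},\nu^\sse{\gamma_k}_{s_1},\Lambda^\sse{\gamma_k}_{s_1},\dots)\big]=0,
\]
obtaining $\E[(Z_t-Z_r)h(\cdots)]=0$; density of $\R_+\setminus N$ together with the c\`adl\`ag property inherent in the Meyer-Zheng setting then extends the identity to all times, showing that $Z$ is a martingale and hence $(\mu,\nu,\Lambda)$ solves $(\mathbf{MP'})^\rho_{u_0,v_0}$. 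The principal obstacle is the $\Lambda$-integral: the random measure $\Lambda^\sse{\gamma_k}(ds,dx)$ is not a Meyer-Zheng-continuous functional of the path---precisely the obstruction (as noted just before the proposition) that prevents identifying $\Lambda$ with the quadratic variation of the limit martingales---so no direct continuous-mapping theorem applies. The workaround is to exploit that the $x$-integrand is the fixed function $\phi\psi$, reducing the problem to Stieltjes integration against the scalar increasing process $A^\sse{\gamma_k}_s$ of uniformly bounded expectation, which can then be handled by the Riemann-sum approximation described above.
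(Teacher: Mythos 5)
Your overall strategy coincides with the paper's: both approaches push the finite-rate martingales $Z^{\sse{\gamma_k}}$ supplied by Corollary~\ref{cor MPinf} through the Meyer--Zheng limit and conclude via uniform integrability and the martingale-limit theorem \cite[Thm.\ 11]{MZ84}. You are in fact more explicit than the paper about \emph{why} $Z^{\sse{\gamma_k}}\to Z$ in the Meyer--Zheng sense (the paper simply asserts the convergence of the transformed processes), and your reduction of the $\Lambda$-integral to a scalar Stieltjes integral against the increasing process $A^{\sse{\gamma_k}}_s=\langle\phi\psi,\Lambda^{\sse{\gamma_k}}_s\rangle$ is a sensible way to make that step concrete, even if the mesh-refinement error still needs a touch more care than ``$|F|\le1$ plus a first-moment bound''.

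The genuine gap is in the uniform integrability step. You claim $\sup_k\E\big[|Z^{\sse{\gamma_k}}_t|^{1+\varepsilon}\big]<\infty$ from $|F|\le1$, the first-moment bounds of Lemma~\ref{lemma uniform first moments dual process}, and $L^2$-estimates on $\langle\mu^{\sse{\gamma_k}}_t,\phi\rangle$, $\langle\nu^{\sse{\gamma_k}}_t,\phi\rangle$. These ingredients do control the first two terms of $Z^{\sse{\gamma_k}}_t$, but the third term is bounded in absolute value only by $4(1-\rho^2)\langle\Lambda^{\sse{\gamma_k}}_t,\phi\psi\rangle$, for which Lemma~\ref{lemma uniform first moments dual process} provides a uniform \emph{first} moment and nothing more; a uniform $L^1$ bound does not give uniform integrability, and higher moments of $\Lambda^{\sse{\gamma}}_t$ are precisely what is unavailable for general $\rho<0$ (they would require the integrated fourth mixed moment bound of Proposition~\ref{mixed_moments}, which needs $\rho<-\frac{1}{\sqrt{2}}$ and Heaviside data). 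The paper's fix is to bound $Z^{\sse{\gamma}}_t$ as a whole rather than term by term: since $Z^{\sse{\gamma}}$ is a martingale whose quadratic variation is $8(1-\rho^2)\int_{[0,t]\times\R}F(\cdot)^2\,\phi(x)\psi(x)\,\Lambda^{\sse{\gamma}}(ds,dx)$ by Corollary~\ref{cor MPinf}, and $|F|\le1$, one gets $\E\big[|Z^{\sse{\gamma}}_t|^2\big]\le 8(1-\rho^2)\,\E\big[\langle\Lambda^{\sse{\gamma}}_t,\phi\psi\rangle\big]$, so the first-moment bound on $\Lambda^{\sse{\gamma}}$ already delivers a uniform \emph{second}-moment bound on $Z^{\sse{\gamma}}_t$ and hence uniform integrability. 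You should replace your term-by-term estimate with this argument; the rest of your proof then goes through.
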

\begin{pf}
We give the proof for $( u_0, v_0)\in(\calB_\rap^+)^2$.

First, we show that the limit point $(\Lambda_t)_{t\ge0}$ of the family
$(\Lambda_t^\sse{\gamma})_{t\ge0}$ has the properties required in
Definition~\ref{defn:MP'}.
It is clear that $(\Lambda_t)_{t\ge0}$ is increasing with $\Lambda
_0=0$. We check condition \eqref{finiteness Lambda 1}: By \cite
{MZ84}, Theorem~5 (see also \cite{Kurtz91}, Theorem~1.1(b)), we can find a
sequence $\gamma_k\uparrow\infty$ and a set $I\subseteq(0,\infty)$ of
full Lebesgue measure such that
the finite dimensional distributions of $(\Lambda_t^\sse{\gamma
_k})_{t\in I}$ converge weakly to those of $(\Lambda_t)_{t\in I}$ as
$k\to\infty$.
Fix $t\in I$. Then for all test functions $\phi\in\bigcup_{\lambda
>0}\calC_{-\lambda}^+$,
by estimate~\eqref{uniform first moments dual process 2} in Lemma~\ref
{lemma uniform first moments dual process} and Fatou's lemma, we have
\[
\E_{ u_0, v_0}\bigl[\langle\Lambda_t,\phi\rangle\bigr] \le
\liminf_{k\to\infty}\E_{ u_0, v_0}\bigl[\bigl\langle
\Lambda^\sse {\gamma _k}_t,\phi\bigr\rangle
\bigr] <\infty.
\]
Now use right-continuity and monotonicity of $(\Lambda_t)_{t\ge0}$ and
another application of Fatou's lemma to extend this to all $t>0$. This
shows that\break $\E_{u_0,v_0}[\Lambda_t(dx)]\in\calM_\rap$ for all $t>0$,
that is, \eqref{finiteness Lambda 1}.

It remains to check that for all
test functions $\phi,\psi\in (\calC_{\tem}^{(2)} )^+$,
the process
%
\begin{eqnarray}
\label{MP11} \tilde M_t & :=& F(\mu_t,
\nu_t,\phi,\psi) - F(\mu_0,\nu_0,\phi ,\psi)\nonumber
\\
&&{} - \frac{1}{2}\int_0^t F(
\mu_s, \nu_s,\phi,\psi) \langle\!\langle
\mu_s, \nu_s, \Delta\phi, \Delta\psi\rangle
\!\rangle_\rho \,ds
\\
&&{} - 4\bigl(1-\rho^2\bigr)\int_{[0,t]\times\R} F(
\mu_s,\nu_s,\phi,\psi) \phi(x)\psi (x) \Lambda(ds,dx),\qquad t
\ge0\nonumber
\end{eqnarray}
is a martingale. Denote by $\tilde M_t^\sse{\gamma}$ the same
expression but with $(\mu,\nu,\Lambda)$ replaced by $(\mu^\sse
{\gamma
},\nu^\sse{\gamma},\Lambda^\sse{\gamma})$. Choosing a sequence
$\gamma
_k\uparrow\infty$ such that $( \mu_t^\sse{\gamma_k}, \nu_t^\sse
{\gamma
_k},\break  \Lambda_t^\sse{\gamma_k})_{t\ge0}$ converges to $( \mu_t, \nu_t,
\Lambda_t)_{t\ge0}$ w.r.t. the Meyer--Zheng topology on $D_{[0,\infty
)}(\calM_\rap^3)$, we get that $(\tilde M_t^\sse{\gamma_k})_{t\ge0}$
converges to $(\tilde M_t)_{t\ge0}$ w.r.t. the Meyer--Zheng topology
on $D_{[0,\infty)}(\R)$ as $k\to\infty$.
Moreover, by Corollary~\ref{cor MPinf} we know that $\tilde M^\sse
{\gamma}$ are martingales for each $\gamma>0$ with quadratic variation
%
\begin{equation}
8\bigl(1-\rho^2\bigr)\int_{[0,t]\times\R}F\bigl(
\mu_s^\sse{\gamma},\nu_s^\sse {
\gamma },\phi,\psi\bigr)^2 \phi(x) \psi(x) \Lambda^\sse{
\gamma}(ds,dx).
\end{equation}
Consequently, using the Burkholder--Davis--Gundy inequality and the
fact that $|F(\cdot)|\le1$,
we have
\begin{eqnarray*}
\E_{ u_0, v_0} \bigl[\bigl|\tilde M_t^\sse{
\gamma}\bigr|^2 \bigr]&\le&\E_{ u_0,
v_0} \bigl[\bigl[\tilde
M^\sse{\gamma},\tilde M^\sse{\gamma}\bigr]_t
\bigr]
\\
&\le&8\bigl(1-\rho^2\bigr) \E_{ u_0, v_0} \biggl[\int
_{[0,t]\times\R}\phi (x)\psi(x) \Lambda^\sse{\gamma}(ds,dx)
\biggr]
\\
&=&8\bigl(1-\rho^2\bigr) \E_{ u_0, v_0} \bigl[\bigl\langle
\Lambda_t^\sse{\gamma },\phi \psi\bigr\rangle \bigr].
\end{eqnarray*}
By estimate \eqref{uniform first moments dual process 2} in Lemma~\ref
{lemma uniform first moments dual process}, for each $T>0$ the previous
display is bounded
uniformly in $\gamma>0$ and $t\in[0,T]$.
Hence we get
\[
\sup_{\gamma>0}\sup_{t\in[0,T]}\E_{ u_0, v_0}
\bigl[\bigl|\tilde M_t^\sse {\gamma}\bigr|^2 \bigr]<
\infty
\]
for all $T>0$.
Applying \cite{MZ84}, Theorem~11, we infer that the Meyer--Zheng limit
$\tilde M$ is again a martingale, which completes our argument.
\end{pf}

We now turn to proving the ``separation of types'' property, that is,
the fact that for all limit points the measures $\mu_t$ and $\nu_t$ are
mutually singular for each $t>0$. In fact, we will prove a slightly
stronger assertion, namely \eqref{singularity3} below. Its proof relies
on the colored particle moment duality of Lemma~\ref{la:mdual}
applied to mixed second moments of $(\mu_t^\sse{\gamma},\nu_t^\sse
{\gamma})$.

\begin{lemma}[(Separation of types)]\label{lemma:sep}
Let $\rho<0$ and $( u_0, v_0)\in(\calB^+_\rap)^2$ [resp., $( u_0,
v_0)\in(\calB^+_\tem)^2$].
Suppose that
$(\mu_t,\nu_t)_{t\ge0}\in D_{[0,\infty)}(\calM_{\rap}^2)$ [resp.,\break 
$D_{[0,\infty)}(\calM_{\tem}^2)$] is a limit point with respect to the
Meyer--Zheng topology of the family of measure-valued processes
$(\mu_t^\sse{\gamma},\nu_t^\sse{\gamma})_{t\ge0}$ from \eqref
{defn:measure-valued processes3}.
Then for
each $t>0$, $x\in\R$ and $\eps>0$ we have
%
\begin{equation}
\label{singularity3} S_{t+\eps} u_0(x) S_{t+\eps}
v_0(x)\ge\E_{ u_0, v_0} \bigl[S_\eps \mu
_t(x) S_\eps\nu_t(x) \bigr]\mathop{\xrightarrow}^{\eps
\downarrow0}0.
\end{equation}
\end{lemma}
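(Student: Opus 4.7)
The plan is to exploit the second-moment duality~\eqref{eq:dual second moments}, which for $\rho<0$ provides a clean monotonicity in $\gamma$. I will first derive the upper bound in~\eqref{singularity3} by transferring an a-priori estimate from the pre-limit processes via Fatou's lemma, and then identify the vanishing as $\eps\downarrow0$ by explicitly computing the $\gamma\to\infty$ limit of the mixed second moments.

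First I would bound the mixed second moments of the pre-limit processes. Since $\rho<0$ and $L_t^{1,2}\ge 0$, the factor $e^{\gamma\rho L_t^{1,2}}$ in~\eqref{eq:dual second moments} is at most $1$, so
\[
\E\bigl[u_t^\sse{\gamma}(y)\,v_t^\sse{\gamma}(z)\bigr] \;=\; \E_{y,z}\bigl[u_0(B_t^1)v_0(B_t^2)e^{\gamma\rho L_t^{1,2}}\bigr] \;\le\; S_t u_0(y)\,S_t v_0(z),
\]
and integrating against $p_\eps(x-y)p_\eps(x-z)$ yields $\E[S_\eps\mu_t^\sse{\gamma}(x)\,S_\eps\nu_t^\sse{\gamma}(x)] \le S_{t+\eps}u_0(x)S_{t+\eps}v_0(x)$. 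To transfer this bound to the Meyer--Zheng limit, I would fix a subsequence $\gamma_k\to\infty$ realizing the limit point; for Lebesgue-a.e.~$t>0$ this yields $(\mu_t^\sse{\gamma_k},\nu_t^\sse{\gamma_k})\Rightarrow(\mu_t,\nu_t)$ in distribution on $\calM_\rap^2$ (resp.~$\calM_\tem^2$). Since $p_\eps(x-\cdot)$ is a Schwartz function, the functional $\nu\mapsto S_\eps\nu(x)$ is continuous on these measure spaces, and Fatou's lemma preserves the bound. The estimate then extends from a.e.~$t$ to every $t>0$ by a right-continuity argument using c\`adl\`ag regularity of the limit path and continuity in $t$ of the (deterministic) right-hand side.

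For the vanishing as $\eps\downarrow0$, I would pass to the $\gamma\to\infty$ limit inside the duality. Since $\rho<0$, $e^{\gamma\rho L_t^{1,2}}$ decreases monotonically to $\1_{\{L_t^{1,2}=0\}}$ as $\gamma\uparrow\infty$, and monotone convergence gives
\[
\lim_{\gamma\to\infty}\E\bigl[u_t^\sse{\gamma}(y)v_t^\sse{\gamma}(z)\bigr] \;=\; g_t(y,z) \;:=\; \E_{y,z}\bigl[u_0(B^1_t)v_0(B^2_t)\,\1_{\{L_t^{1,2}=0\}}\bigr].
\]
A second Fatou argument (with the bound from the previous step serving as dominating function for the outer $dy\,dz$-integral) would then yield
\[
\E[S_\eps\mu_t(x)\,S_\eps\nu_t(x)] \;\le\; \iint p_\eps(x-y)p_\eps(x-z)\,g_t(y,z)\,dy\,dz.
\]
To close the argument, observe that $B^1-B^2$ is a Brownian motion (of scale $\sqrt 2$) started at $y-z$, so the standard reflection-principle estimate gives $\p_{y,z}(L_t^{1,2}=0) \le C|y-z|/\sqrt t$; in particular $g_t$ vanishes on the diagonal at this quantitative rate. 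Combined with $\iint p_\eps(x-y)p_\eps(x-z)|y-z|\,dy\,dz = O(\sqrt\eps)$, this forces the integral to be $O(\sqrt{\eps/t})\to 0$.

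The main obstacle I anticipate is the tempered case $(u_0,v_0)\in(\calB_\tem^+)^2$, where $u_0,v_0$ need not be globally bounded. There I would split the $dy\,dz$-integral into a near-diagonal region (where local boundedness of $u_0,v_0$ together with the reflection-principle estimate for $\p_{y,z}(L_t^{1,2}=0)$ suffices to force $g_t\to 0$) and a complementary far region whose contribution is tamed by pitting the Gaussian decay of $p_\eps$ against the at-most-polynomial growth of $u_0,v_0$.
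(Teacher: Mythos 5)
Your proposal is correct and follows essentially the same route as the paper: the second-moment duality \eqref{eq:dual second moments}, Fatou's lemma along the Meyer--Zheng subsequence (for $t$ in a full-measure set, then extended to all $t$ by right-continuity), monotone convergence of $e^{\gamma\rho L_t^{1,2}}$ to $\1_{\{L_t^{1,2}=0\}}$, and the reflection-principle estimate for $\p_{y,z}\{L_t^{1,2}=0\}$. Your treatment of the diagonal is in fact slightly more quantitative than the paper's (which only argues that $(y,z)\mapsto\E_{y,z}[u_0(B^1_t)v_0(B^2_t)\1_{\{L_t^{1,2}=0\}}]$ is continuous and vanishes at $(x,x)$). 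The one imprecision is in your tempered-case sketch: ``local boundedness of $u_0,v_0$'' on the near-diagonal region does not control $u_0(B^1_t)v_0(B^2_t)$, since the Brownian motions at time $t$ are not localized near $x$; you need to first separate the product from the indicator, e.g.\ via Cauchy--Schwarz, $\E_{y,z}[u_0(B^1_t)v_0(B^2_t)\1_{\{L_t^{1,2}=0\}}]\le (S_tu_0^2(y)\,S_tv_0^2(z))^{1/2}(\p_{y,z}\{L_t^{1,2}=0\})^{1/2}$, which is exactly the step the paper uses and which makes both the rapidly decreasing and the tempered cases go through uniformly.
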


\begin{pf}
We give the proof for $( u_0, v_0)\in(\calB_\rap^+)^2$, the proof for
initial conditions in $\calB_\tem^+$ being completely analogous. Note
that in either case, the left-hand side of \eqref{singularity3} is finite by Lemma~\ref{lemma estimates}(a).

Again using \cite{MZ84}, Theorem~5, choose a sequence $\gamma
_k\uparrow
\infty$ and a set $I\subseteq(0,\infty)$ of full Lebesgue measure
such that
the finite dimensional distributions of $(\mu_t^\sse{\gamma_k},\nu
_t^\sse{\gamma_k})_{t\in I}$ converge weakly to those of $(\mu_t,\nu
_t)_{t\in I}$ as $k\to\infty$.
Fix $t\in I$. Then for all test functions $\phi,\psi$
we have weak convergence
%
\begin{equation}
\label{weak convergence second mixed moments} \bigl\langle\mu_t^\sse{
\gamma_k},\phi\bigr\rangle \bigl\langle\nu_t^\sse
{\gamma _k},\psi\bigr\rangle\mathop{\xrightarrow}^{k\uparrow\infty}\langle
\mu_t,\phi \rangle \langle\nu_t,\psi\rangle
\end{equation}
in $\R$.
Thus for each $x\in\R$, letting $\phi(\cdot):=\psi(\cdot):=p_\eps
(x-\cdot)$
we obtain weak convergence
\[
S_\eps\mu_t^\sse{\gamma_k}(x)
S_\eps\nu_t^\sse{\gamma _k}(x)
\mathop{\xrightarrow}^{k\uparrow\infty}S_\eps\mu_t(x)
S_\eps\nu_t(x).
\]
Using Fatou's lemma and the colored particle moment duality in form
\eqref{eq:dual second moments} for mixed second moments, we get since
$\rho<0$,
%
\begin{eqnarray}
\label{proof separation 1a} &&\E_{ u_0, v_0} \bigl[S_\eps\mu_t(x)
S_\eps\nu_t(x) \bigr]\nonumber
\\
&&\qquad\le\liminf_{k\to\infty}\E_{ u_0, v_0} \bigl[S_\eps
\mu_t^\sse {\gamma _k}(x) S_\eps
\nu_t^\sse{\gamma_k}(x) \bigr]\nonumber
\\
&&\qquad=\liminf_{k\to\infty}\iint \,dy\,dz p_\eps(x-y)p_\eps(x-z)
\E_{ u_0,
v_0} \bigl[u_t^\sse{\gamma_k}(y)
v^\sse{\gamma_k}_t(z) \bigr]
\\
&&\qquad=\liminf_{k\to\infty}\iint \,dy\,dz p_\eps(x-y)p_\eps(x-z)
\E _{y,z} \bigl[ u_0\bigl(B_t^\ssup{1}
\bigr) v_0\bigl(B_t^\ssup{2}\bigr)
e^{\gamma_k\rho L^{1,2}_t} \bigr]\nonumber
\\
&&\qquad=\iint \,dy\,dz p_\eps(x-y)p_\eps(x-z) \E_{y,z}
\bigl[ u_0\bigl(B_t^\ssup{1}\bigr)
v_0\bigl(B_t^\ssup{2}\bigr) \1_{\{L^{1,2}_t=0\}}\nonumber
\bigr],
\end{eqnarray}
for all $x\in\R$ and $t\in I$, where $(B_t^\ssup{i})_{t\ge0}$, $i=1,2$
are independent Brownian motions started at $y$ and $z$, respectively,
and $(L_t^{1,2})_{t\ge0}$ denotes their intersection local time. It is
easy to see that the right-hand side of \eqref{proof separation 1a} is continuous
in $t$. Using the fact that $I$ has full Lebesgue measure together with
right-continuity of the paths of $(\mu_t,\nu_t)_{t\ge0}$ and Fatou's
lemma, we get estimate \eqref{proof separation 1a} for \emph{all}
$t>0$. This implies in particular that
\[
\E_{ u_0, v_0} \bigl[S_\eps\mu_t(x) S_\eps
\nu_t(x) \bigr]\le S_{t+\eps} u_0(x)
S_{t+\eps} v_0(x)<\infty,\qquad x\in\R, t>0.
\]
Moreover,
using H\"older's inequality we have
\begin{eqnarray*}
&&\E_{y,z} \bigl[ u_0\bigl(B_t^\ssup{1}
\bigr)  v_0\bigl(B_t^\ssup{2}\bigr)
\1_{\{
L^{1,2}_t=0\}
} \bigr]
\\
&&\qquad\le \bigl(\E_{y,z} \bigl[ \bigl( u_0\bigl(B_t^\ssup{1}
\bigr) v_0\bigl(B_t^\ssup {2}\bigr)
\bigr)^2 \bigr] \bigr)^{1/2} \bigl(\p_{y,z} \bigl\{
L^{1,2}_t=0 \bigr\} \bigr)^{1/2}
\\
&&\qquad= \bigl(S_t u_0^2(y) S_t
v_0^2(z) \bigr)^{1/2} \bigl(\p_{y,z}
\bigl\{ L^{1,2}_t=0 \bigr\} \bigr)^{1/2}.
\end{eqnarray*}
Observe that the right-hand side of the previous display tends to $0$ as $(y,z)\to
(x,x)$: Assume without loss of generality that $y<z$, and let $B$ be a
Brownian motion
starting at $y-z<0$
with local time $L^0$ at $0$. Using the fact that $L_t^{1,2}\stackrel
{d}{=}\frac{1}{2}L_{2t}^0$
together with Lemma~\ref{le:localtime} and the reflection principle
(see, e.g., \cite{MP10}, Theorem~2.21), we obtain for $(y,z)\to(x,x)$ that
\begin{eqnarray*}
\p_{y,z} \bigl\{L^{1,2}_t=0 \bigr\}&=&
\p_{y-z} \bigl\{L^{0}_{2t}=0 \bigr\} =\p
_{y-z} \bigl\{M^{+}_{2t}=0 \bigr\}=
\p_{y-z} \{M_{2t}\le0 \}
\\
& =&\p _{0} \{M_{2t}\le z-y \} =1-2\p_{0}
\{B_{2t}>z-y \}
\\
& \to& 1-2\p_{0} \{ B_{2t}>0 \} = 0.
\end{eqnarray*}
Since on the other hand clearly $\E_{x,x} [ u_0(B_t^\ssup{1})
v_0(B_t^\ssup{2}) \1_{\{L^{1,2}_t=0\}} ]=0$ for $t>0$, this shows
that the mapping
\[
(y,z)\mapsto\E_{y,z} \bigl[ u_0\bigl(B_t^\ssup{1}
\bigr) v_0\bigl(B_t^\ssup{2}\bigr) \1
_{\{
L^{1,2}_t=0\}} \bigr]
\]
is continuous
at all points $(x,x)$ of the diagonal in $\R^2$, where it takes the
value $0$.
As a consequence, the right-hand side of \eqref{proof separation 1a} converges to
$0$ as $\eps\downarrow0$, giving
\eqref{singularity3} for all $t>0$ and $\in\R$.
\end{pf}

Of course, Lemma~\ref{lemma:sep} holds in particular also for limit
points in the stronger Skorokhod topology. In this case,
together with absolute continuity of the limiting measures from
Proposition~\ref{prop:AC}, it implies the separation of types in the
intuitive sense \eqref{singularity1}, that is, the mutual singularity
of the limiting measures $(\mu_t,\nu_t)$ for fixed $t>0$:

\begin{corollary}[(Separation of types)]\label{cor:singularity}
Let $\rho<0$ and $( u_0, v_0)\in(\calB^+_\rap)^2$ [resp., $( u_0,
v_0)\in(\calB^+_\tem)^2$]. If $(\mu_t,\nu_t)_{t\ge0}\in\calC
_{[0,\infty
)}(\calM_{\rap}^2)$ [resp., $\calC_{[0,\infty)}(\calM_{\tem}^2)$]
is a
limit point with respect to the Skorokhod topology of the family \eqref
{defn:measure-valued processes3}, then
for each $t>0$ the measures $\mu_t$ and $\nu_t$ (which are known to be
absolutely continuous by Proposition~\ref{prop:AC}) are mutually
singular: We have
%
\begin{equation}
\label{singularity2} \E_{u_0,v_0} \biggl[\int_\R
\mu_t(x)\nu_t(x) \,dx \biggr]=0
\end{equation}
and thus also
\[
\mu_t(\cdot) \nu_t(\cdot)=0,\qquad \p_{u_0,v_0}\otimes
\ell\mbox{-a.s.}
\]
\end{corollary}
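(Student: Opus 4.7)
The plan is to deduce \eqref{singularity2}, and hence mutual singularity, by combining the pointwise ``separation-of-types'' bound of Lemma \ref{lemma:sep} with the $\p$-a.s.\ absolute continuity of $\mu_t$ and $\nu_t$ provided by Proposition \ref{prop:AC}. By Tonelli's theorem it suffices to establish that $\E_{u_0,v_0}[\mu_t(x)\nu_t(x)]=0$ for Lebesgue-a.e.\ $x\in\R$; integrating then yields \eqref{singularity2}, and the mutual singularity of the two absolutely continuous measures follows at once.

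To carry this out, I would first observe that $S_\eps\mu_t(x)=\int p_\eps(x-y)\mu_t(dy)$ is a heat-kernel mollification of the random measure $\mu_t$. Since $\mu_t(\cdot,\omega)$ is $\p$-a.s.\ a locally integrable density, the Lebesgue differentiation theorem applied $\omega$-wise gives $S_\eps\mu_t(x,\omega)\to\mu_t(x,\omega)$ as $\eps\downarrow 0$ for Lebesgue-a.e.\ $x$, and similarly for $\nu_t$. A Fubini argument on the product space $\R\times\Omega$ then upgrades this to the following: for Lebesgue-a.e.\ $x\in\R$, both convergences $S_\eps\mu_t(x)\to\mu_t(x)$ and $S_\eps\nu_t(x)\to\nu_t(x)$ hold $\p$-a.s., so that
\[
S_\eps\mu_t(x)\,S_\eps\nu_t(x)\xrightarrow[\eps\downarrow 0]{}\mu_t(x)\nu_t(x)\qquad\p\text{-a.s.}
\]

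Fixing such an $x$ and applying Fatou's lemma to the nonnegative quantities yields
\[
\E_{u_0,v_0}\bigl[\mu_t(x)\nu_t(x)\bigr]\le\liminf_{\eps\downarrow 0}\E_{u_0,v_0}\bigl[S_\eps\mu_t(x)\,S_\eps\nu_t(x)\bigr]=0,
\]
where the final equality is precisely the conclusion of Lemma \ref{lemma:sep}. Integrating in $x$ and invoking Tonelli gives \eqref{singularity2}. For the mutual singularity assertion, note that since $\mu_t,\nu_t$ are absolutely continuous with densities $\mu_t(\cdot),\nu_t(\cdot)$, the identity $\mu_t(x)\nu_t(x)=0$ $\p\otimes\ell$-a.s.\ implies that $\p$-a.s.\ the measures $\mu_t$ and $\nu_t$ are supported on the disjoint sets $\{\nu_t=0\}$ and $\{\mu_t=0\}$ respectively. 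I do not expect any serious obstacle here: the argument is essentially a clean corollary of Lemma \ref{lemma:sep} and Proposition \ref{prop:AC}, and the only point requiring a moment's care is the Fubini--Lebesgue differentiation interchange needed to pass from $\omega$-dependent a.e.\ $x$ to deterministic a.e.\ $x$.
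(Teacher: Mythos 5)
Your proof is correct and follows essentially the same route as the paper: heat-kernel mollification, a.e.\ convergence of $S_\eps\mu_t(x)\to\mu_t(x)$ via differentiation theory combined with Fubini, Fatou's lemma, and the vanishing limit from Lemma \ref{lemma:sep}. The only (harmless) difference is the order of interchanges: you apply Fatou pointwise at a.e.\ fixed $x$ and then integrate the resulting zeros via Tonelli, whereas the paper applies Fatou to a $\varphi$-weighted $dx$-integral and then needs an extra dominated-convergence step using the bound $\E[S_\eps\mu_t(x)S_\eps\nu_t(x)]\le S_{t+\eps}u_0(x)S_{t+\eps}v_0(x)$ from Lemma \ref{lemma:sep} together with Lemma \ref{lemma estimates}; your ordering dispenses with that dominating function.
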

\begin{pf}
Let $0\le\varphi\in\calC_c^\infty$. By differentiation theory for
measures (see, e.g., \cite{Rudin74}, Theorem~8.6), we have $\p_{u_0,v_0}$-a.s.
\[
\bigl(S_\eps\mu_t(x),S_\eps
\nu_t(x)\bigr)\mathop{\xrightarrow}^{\eps\downarrow0}\bigl(\mu
_t(x),\nu_t(x)\bigr) \qquad\mbox{for }\ell\mbox{-a.e. }x\in\R.
\]
Using again Fatou's lemma and Fubini's theorem, we get
%
\begin{eqnarray}
&&\E_{u_0,v_0} \biggl[\int_\R \mu_t(x)
\nu_t(x)\varphi(x) \,dx \biggr]\nonumber
\\
&&\qquad=\E_{u_0,v_0} \biggl[\int_\R\lim
_{\eps\downarrow0} S_\eps\mu_t(x) S_\eps
\nu_t(x) \varphi(x) \,dx \biggr]
\\
&&\qquad\le\liminf_{\eps\downarrow0}\int_\R
\E_{u_0,v_0} \bigl[S_\eps \mu_t(x) S_\eps
\nu_t(x) \bigr] \varphi(x)\,dx.\nonumber
\end{eqnarray}
By Lemma~\ref{lemma:sep}
the integrand in the integral $\int_\R\cdots dx$ on the right-hand side of the
previous display converges to $0$ as $\eps\downarrow0$ pointwise in
$x\in\R$ and for $\eps\in[0,1]$ is dominated by the integrable function
\[
\varphi(x)\sup_{s\in[0,t+1]} \bigl\{ S_{s}u_0(x)
S_{s} v_0(x) \bigr\}
\]
[note Lemma~\ref{lemma estimates}(a)].
By dominated convergence,
$\E [\int_\R\mu_t(x)\nu_t(x)\varphi(x) \,dx ]=0$, and since
$0\le\varphi\in\calC_c^\infty$ was arbitrary, our proof is complete.
\end{pf}

\section{Self-duality and uniqueness}
\label{ssn:martconv}

In this section, we establish uniqueness for the martingale problem
$(\mathbf{MP'})_{\mu_0,\nu_0}^\rho$ [and thus also for the stronger
martingale problem $(\mathbf{MP})_{\mu_0,\nu_0}^\rho$] subject to the
restriction that the solutions have the ``separation of types'' property
\eqref{singularity3}.
Recall from the \hyperref[sec1]{Introduction} that
these martingale problems are not well posed without putting some
restrictions on the solutions, and that
for the finite rate symbiotic branching model $\operatorname{cSBM}(\varrho
,\gamma)_{u_0, v_0}$ uniqueness
is established by prescribing the structure of the quadratic variation
process $(\Lambda)_{t\ge0}$. In~\cite{EF04}, Proposition~5, this is proved
via an exponential self-duality. Our first goal in this section is to
extend this self-duality to solutions of the martingale problem
$(\mathbf{MP'})_{\mu_0,\nu_0}^\rho$
satisfying the said condition,
circumventing an explicit specification of the quadratic variation.
We have the following result:

\begin{prop}\label{prop:self-dual}
Let $\rho\in(-1,1)$. Fix
(possibly random) initial conditions $(\mu_0,\nu_0)\in\calM_{\tem}^2$
and (deterministic) initial conditions $(\tilde\mu_0,\tilde\nu
_0)\in
(\calB_{\rap}^+)^2$.
Suppose that $(\mu_t,\nu_t,)_{t\ge0}\in D_{[0,\infty)}(\calM_\tem^2)$
respectively $(\tilde\mu_t,\tilde\nu_t)_{t\ge0}\in\break  D_{[0,\infty
)}(\calM
_\rap^2)$
are solutions to the martingale problem $(\mathbf{MP'})^\rho_{\mu
_0,\nu
_0}$ respectively $(\mathbf{MP'})^\rho_{\tilde\mu_0,\tilde\nu_0}$.
Further, assume that the solutions satisfy the ``separation of types''
property in the sense
that for
Lebesgue-a.e. $t\in(0,\infty)$ and
all $x\in\R$,
we have
%
\begin{equation}
\label{singularity4} \E_{\mu_0,\nu_0} \bigl[S_\eps\mu_t(x)
S_\eps\nu_t(x) \bigr]\mathop{\xrightarrow}^ {\eps\downarrow0}0
\quad\mbox{and}\quad \E_{\tilde\mu_0,\tilde\nu_0} \bigl[S_\eps\tilde\mu_t(x)
S_\eps \tilde\nu _t(x) \bigr]\mathop{\xrightarrow}^{\eps
\downarrow0}0.
\end{equation}
Moreover, assume that for each $T>0$ we have
%
\begin{eqnarray}
\label{upper bound singularity} \sup_{t\in[0,T], \eps\in[0,1]}\E_{\mu_0,\nu_0}
\bigl[S_\eps\mu _t(\cdot) S_\eps
\nu_t(\cdot) \bigr]&\in&\calB_\tem^+,
\nonumber
\\[-8pt]
\\[-8pt]
\nonumber
\sup_{t\in[0,T], \eps\in[0,1]}\E_{\tilde\mu_0,\tilde\nu
_0} \bigl[S_\eps \tilde
\mu_t(\cdot) S_\eps\tilde\nu_t(\cdot) \bigr]&\in&
\calB _\rap^+.
\end{eqnarray}
Then
the following approximate
self-duality holds for the processes $(\mu_t,\nu_t)_{t\ge0}$ and
$(\tilde\mu_t,\tilde\nu_t)_{t\ge0}$, involving the function $F$ as
in~\eqref{self-duality function}: for $T > 0$,
%
\begin{equation}
\label{self-duality} \int_0^T\E \bigl[F(
\mu_t,\nu_t,\tilde\mu_0,\tilde
\nu_0) \bigr]\,dt =\lim_{\eps\downarrow0}\int
_0^T \E \bigl[F(S_\eps
\mu_0,S_\eps\nu _0,\tilde\mu _t,
\tilde\nu_t) \bigr]\,dt.
\end{equation}
Moreover, for $(\mu_0,\nu_0)\in(\calB_{\tem}^+)^2$, we have the
self-duality
%
\begin{equation}
\label{self-duality 2} \E_{\mu_0,\nu_0} \bigl[F(\mu_t,\nu_t,
\tilde\mu_0,\tilde\nu _0) \bigr]=\E_{\tilde\mu_0,\tilde\nu_0}
\bigl[F(\mu_0,\nu_0,\tilde\mu _t,\tilde
\nu_t) \bigr],\qquad t\ge0.
\end{equation}
\end{prop}

The general strategy of the proof is similar to that of the results in
\cite{EK86}, Section~4.4; however none of those results is directly
applicable in our case. Also, we employ the same spatial smoothing
procedure using the heat kernel as in the proof of \cite{AT00},
Proposition~1.
\begin{pf*}{Proof of Proposition \ref{prop:self-dual}}
By Definition~\ref{defn:MP'}, there exist increasing processes
$(\Lambda
_t)_{t\ge0}\in D_{[0,\infty)}(\calM_\tem)$ and $(\tilde\Lambda
_t)_{t\ge
0}\in D_{[0,\infty)}(\calM_\rap)$, with $\Lambda_0=\tilde\Lambda_0=0$
and satisfying \eqref{finiteness Lambda 1}, such that for all test
functions, expression \eqref{MP9} is a martingale. For the purposes of
the proof, we may assume that $(\mu,\nu,\Lambda)$ and $(\tilde\mu
,\tilde
\nu,\tilde\Lambda)$ are defined on
a common sample space $\Omega$ and are independent of each other.
The corresponding probability, respectively expectation on $\Omega$,
will be denoted by $\p$, respectively $\E$.

Observe that by the definition of $\langle\!\langle\cdot\rangle\!
\rangle
_\rho$ [recall \eqref{self-duality product}] and the symmetry of the
heat kernel, we have for each $\eps>0$ and $\phi,\psi\in (\calC
_\rap
^{(2)} )^+$,
\begin{eqnarray*}
\langle\!\langle S_\eps\mu_t,S_\eps
\nu_t,\phi,\psi\rangle\!\rangle _\rho& =&\langle\!
\langle\mu_t,\nu_t,S_\eps\phi,S_\eps
\psi\rangle\!\rangle _\rho,
\\
\langle\!\langle S_\eps\mu_t,S_\eps
\nu_t,\Delta\phi,\Delta\psi \rangle \!\rangle_\rho&=&
\langle\!\langle\mu_t,\nu_t,\Delta S_\eps\phi,
\Delta S_\eps \psi\rangle\!\rangle_\rho.
\end{eqnarray*}
Thus by taking expectations in \eqref{MP9} with $(S_\eps\phi,S_\eps
\psi
)$ in place of $(\phi,\psi)$, we get
%
\begin{eqnarray}
\label{MP10} && \E \bigl[ F({S_\eps\mu_t},
{S_\eps\nu_t},\phi,\psi) - F({S_\eps \mu
_0},{S_\eps\nu_0},\phi,\psi) \bigr]\nonumber
\\
&&\qquad = \frac{1}{2} \E \biggl[\int_0^t
F({S_\eps\mu_s}, {S_\eps\nu _s},
\phi ,\psi) \langle\!\langle{S_\eps\mu_s},
{S_\eps\nu_s}, \Delta\phi, \Delta \psi\rangle\!\rangle_\rho \,ds \biggr]
\\
&&\qquad\quad{} + 4\bigl(1-\rho^2\bigr) \E \biggl[\int_{[0,t]\times\R}
F({S_\eps\mu _s},{S_\eps \nu_s},
\phi,\psi) {S_\eps\phi}(x){S_\eps\psi}(x) \Lambda (ds,dx)
\biggr]\nonumber
\end{eqnarray}
for all $\eps>0$ and $\phi,\psi\in (\calC_\rap^{(2)} )^+$.
An analogous assertion holds for $(\tilde\mu,\tilde\nu,\tilde
\Lambda)$
if $\phi,\psi\in (\calC_\tem^{(2)} )^+$.

Now fix $T>0$, and for $t,s\in[0,T]$, $\eps>0$, let
\[
f_\eps(t,s):=\E \bigl[F({S_\eps\mu}_t,{S_\eps
\nu}_t,{S_\eps \tilde\mu }_s,{S_\eps
\tilde\nu}_s) \bigr].
\]
Observe that this function is well defined since $S_\eps\mu_t$ and
$S_\eps\nu_t$, respectively $S_\eps\tilde\mu_t$ and $S_\eps\tilde
\nu_t
$, are in $ (\calC_\tem^{(2)} )^+$, respectively $ (\calC
_\rap
^{(2)} )^+$; see, for example, Corollary~\ref{cor estimates}(b).
Then
\begin{eqnarray*}
&&\int_0^T \bigl(f_\eps(r,0)-f_\eps(0,r)
\bigr) \,dr
\\
&&\qquad=\int_0^T \bigl(f_\eps(T-r,r)-f_\eps(0,r)
\bigr) \,dr - \int_0^T \bigl(f_\eps
(r,T-r)-f_\eps(r,0) \bigr) \,dr
\\
&&\qquad=\int_0^T \bigl(\E \bigl[F({S_\eps
\mu}_{T-r},{S_\eps\nu }_{T-r},{S_\eps
\tilde\mu}_r,{S_\eps\tilde\nu}_r)\\
&&\hspace*{75pt}{} -
F({S_\eps\mu}_{0},{S_\eps \nu
}_{0},{S_\eps\tilde\mu}_r,{S_\eps
\tilde\nu}_r) \bigr] \bigr) \,dr
\\
&&\qquad\quad{} - \int_0^T \bigl(\E \bigl[F({S_\eps
\mu}_r,{S_\eps\nu}_r,{S_\eps
\tilde\mu }_{T-r},{S_\eps\tilde\nu}_{T-r})\\
&&\hspace*{89pt}{} -
F({S_\eps\mu}_{r},{S_\eps\nu
}_{r},{S_\eps\tilde\mu}_0,{S_\eps
\tilde\nu}_0) \bigr] \bigr) \,dr.
\end{eqnarray*}
Now we use \eqref{MP10} [resp., the analogous identity for $(\tilde
\mu
,\tilde\nu,\tilde\Lambda)$] with $t$ replaced by $T-r$ for each
$r\in
[0,T]$ and $(\phi,\psi):=(S_\eps\tilde\mu_r,S_\eps\tilde\nu_r)$ [resp.,
$(\phi,\psi):=(S_\eps\mu_r,S_\eps\nu_r)$] to see that the previous
display is equal to\vspace*{-1pt}
\begin{eqnarray*}
&&\frac{1}{2}\int_0^T \E \biggl[\int
_0^{T-r} F({S_\eps\mu
}_{s},{S_\eps\nu }_{s},{S_\eps
\tilde\mu}_r,{S_\eps\tilde\nu}_r)
\langle\!\langle {S_\eps\mu }_s,{S_\eps
\nu}_s,\Delta{S_\eps\tilde\mu}_r,
\Delta{S_\eps\tilde \nu }_r\rangle\!\rangle_\rho \,ds \biggr] \,dr
\\[-1pt]
&&\qquad{} + 4\bigl(1-\rho^2\bigr)\int_0^T
\E \biggl[\int_{[0,T-r]\times\R}F({S_\eps \mu
}_{s},{S_\eps\nu}_{s},{S_\eps\tilde
\mu}_r,{S_{\eps}\tilde\nu}_r)\\
&&\hspace*{158pt}{}\times {S_{2\eps}\tilde\mu}_r(x){S_{2\eps}\tilde
\nu}_r(x) \Lambda (ds,dx) \biggr] \,dr
\\[-1pt]
&&\qquad{} - \frac{1}{2}\int_0^T \E \biggl[
\int_0^{T-r}F({S_\eps\mu
}_{r},{S_\eps\nu }_{r},{S_\eps
\tilde\mu}_s,{S_\eps\tilde\nu}_s)\\[-1pt]
&&\hspace*{99pt}{}\times
 \langle\!\langle\Delta{S_\eps\mu}_r,
\Delta{S_\eps\nu}_r,{S_\eps \tilde\mu
}_s,{S_\eps\tilde\nu}_s\rangle
\!\rangle_\rho \,ds \biggr] \,dr
\\[-1pt]
&&\qquad{} - 4\bigl(1-\rho^2\bigr)\int_0^T
\E \biggl[\int_{[0,T-r]\times\R}F({S_\eps \mu
}_{r},{S_\eps\nu}_{r},{S_\eps\tilde
\mu}_s,{S_\eps\tilde\nu}_s)\\[-1pt]
&&\hspace*{158pt}{}\times
 {S_{2\eps}\mu}_r(x){S_{2\eps}
\nu}_r(x) \tilde\Lambda(ds,dx) \biggr] \,dr.\vspace*{-1pt}
\end{eqnarray*}
Observe that due to symmetry of the Laplacian and Fubini's theorem, the
first and third term
of the last display cancel. Thus we have shown that\vspace*{-1pt}
\begin{eqnarray*}
&&\int_0^T \bigl(f_\eps(r,0)-f_\eps(0,r)
\bigr) \,dr
\\[-1pt]
&&\qquad = 4\bigl(1-\rho^2\bigr) \biggl(\int_0^T
\E \biggl[\int_{[0,T-r]\times\R
}F({S_\eps\mu
}_{s},{S_\eps\nu}_{s},{S_\eps\tilde
\mu}_r,{S_{\eps}\tilde\nu}_r)\\[-1pt]
&&\hspace*{160pt}{}\times {S_{2\eps}\tilde\mu}_r(x){S_{2\eps}\tilde
\nu}_r(x) \Lambda (ds,dx) \biggr] \,dr
\\[-1pt]
&&\quad\qquad{} - \int_0^T\E \biggl[\int
_{[0,T-r]\times\R}F({S_\eps\mu }_{r},{S_\eps
\nu }_{r},{S_\eps\tilde\mu}_s,{S_\eps
\tilde\nu}_s)\\
&&\hspace*{123pt}{}\times {S_{2\eps}\mu}_r(x){S_{2\eps}
\nu}_r(x) \tilde\Lambda(ds,dx) \biggr] \,dr \biggr).\vspace*{-1pt}
\end{eqnarray*}
We will show that each term in the difference on the right-hand side of the
previous display converges to $0$ as $\eps\downarrow0$. Consider the
first term: Since $|F(\cdot)|\le1$, it is bounded in absolute value up
to a constant by\vspace*{-1pt}
\begin{eqnarray*}
&&\int_0^T\E \biggl[\int_{[0,T-r]\times\R}{S_{2\eps}
\tilde\mu }_r(x){S_{2\eps}\tilde\nu}_r(x)
\Lambda(ds,dx) \biggr] \,dr
\\
&&\qquad=\int_0^T\E_{\mu_0,\nu_0} \biggl[\int
_{\R} \E_{\tilde\mu
_0,\tilde\nu
_0} \bigl[{S_{2\eps}\tilde
\mu}_r(x){S_{2\eps}\tilde\nu }_r(x) \bigr]
\Lambda_{T-r}(dx) \biggr] \,dr
\\
&&\qquad\le\int_0^T\E_{\mu_0,\nu_0} \biggl[\int
_{\R} \E_{\tilde\mu
_0,\tilde\nu
_0} \bigl[{S_{2\eps}\tilde
\mu}_r(x){S_{2\eps}\tilde\nu }_r(x) \bigr]
\Lambda_{T}(dx) \biggr] \,dr.
\end{eqnarray*}
By assumption \eqref{singularity4}, the integrand in the above display
converges to $0$
for all $x\in\R$ and almost all $r\in[0,T]$ as $\eps\downarrow0$.
Hence using conditions \eqref{finiteness Lambda 1} and \eqref{upper
bound singularity} together with dominated convergence, we are done.
The argument for the second term in the difference is completely
analogous. Thus in view of the definition of $f_\eps$, we have shown that
%
\begin{equation}
\label{approximate duality difference} \lim_{\eps\downarrow0}\int_0^T
\bigl(\E \bigl[F(\mu_t,\nu _t,S_\eps\tilde\mu
_0,S_\eps\tilde\nu_0) \bigr] - \E
\bigl[F(S_\eps\mu_0,S_\eps\nu _0,
\tilde \mu_t,\tilde\nu_t) \bigr] \bigr) \,dt=0.
\end{equation}

Since $\tilde\mu_0$ and $\tilde\nu_0$ are assumed to be in $\calB
_\rap
^+$, using estimate \eqref{estimate 1} in Lem\-ma~\ref{lemma estimates}(a) and dominated convergence, it is easy to see that
\[
\int_0^T\E \bigl[F(\mu_t,
\nu_t,S_\eps\tilde\mu_0,S_\eps
\tilde \nu_0) \bigr]\,dt \rightarrow\int_0^T
\E \bigl[F(\mu_t,\nu_t,\tilde\mu_0,\tilde \nu
_0) \bigr]\,dt,
\]
as $\eps\downarrow0$.
[Note that the same argument cannot in general be employed for the
second term in the difference in \eqref{approximate duality
difference}: Since $\mu_0$ and $\nu_0$ are only assumed to be in
$\calM
_\tem$ and not in $\calB_\tem^+$, we do not have \eqref{estimate 1} but
only the weaker estimate \eqref{estimate 2} in Lemma~\ref{lemma
estimates}(b), which, however, is not sufficient for dominated
convergence here.] Thus \eqref{self-duality} is proved.

If $(\mu_0,\nu_0)\in(\calB_\tem^+)^2$, we can again use estimate
\eqref
{estimate 1} and dominated convergence to conclude that also
\[
\int_0^T\E \bigl[F(S_\eps
\mu_0,S_\eps\nu_0,\tilde\mu_t,
\tilde \nu_t) \bigr] \,dt\to\int_0^T
\E \bigl[F(\mu_0,\nu_0,\tilde\mu_t,\tilde\nu
_t) \bigr] \,dt
\]
as $\eps\downarrow0$.
Thus in this case we get from \eqref{approximate duality difference} that
\begin{eqnarray*}
&&\int_0^T \bigl(\E \bigl[F(
\mu_t,\nu_t,\tilde\mu_0,\tilde\nu
_0) \bigr] - \E \bigl[F(\mu_0,\nu_0,\tilde
\mu_t,\tilde\nu_t) \bigr] \bigr) \,dt
\\
&&\qquad=\lim_{\eps\downarrow0}\int_0^T
\bigl(\E \bigl[F(\mu_t,\nu _t,S_\eps\tilde
\mu_0,S_\eps\tilde\nu_0) \bigr] - \E
\bigl[F(S_\eps\mu_0,S_\eps \nu _0,
\tilde\mu_t,\tilde\nu_t) \bigr] \bigr) \,dt = 0
\end{eqnarray*}
for each $T>0$. Since the processes $(\mu_t,\nu_t)_{t\ge0}$ and
$(\tilde
\mu_t,\tilde\nu_t)_{t\ge0}$ are assumed c\`adl\`ag, it is readily
checked that the same is true of the integrand in the last display.
Differentiating, we obtain the self-duality \eqref{self-duality 2} for
all $t\ge0$.
\end{pf*}

\begin{prop}[(Uniqueness)] \label{prop:unique}
Fix $\rho\in(-1,0)$ and (possibly random) initial conditions $(\mu
_0,\nu
_0)\in\calM_\tem^2$ or $\calM_\rap^2$. Then there is at most one
solution $(\mu,\nu,\Lambda)$ to the martingale problem $(\mathbf
{MP'})_{\mu_0,\nu_0}^\rho$ satisfying the ``separation of types''
property \eqref{singularity 1}.
\end{prop}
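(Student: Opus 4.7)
The plan is to combine the approximate self-duality of Proposition~\ref{prop:self-dual} with the existence of dual solutions (established via Meyer-Zheng convergence earlier in the paper) and a Markov-style extension. Let $(\mu,\nu,\Lambda)$ and $(\mu',\nu',\Lambda')$ be two solutions of $(\mathbf{MP'})_{\mu_0,\nu_0}^\rho$ satisfying \eqref{singularity 1} and \eqref{finiteness Lambda 1} with the same (possibly random) initial data. For each pair of deterministic test data $(\tilde\mu_0,\tilde\nu_0)\in(\calB_\rap^+)^2$, the existence portion of Theorem~\ref{thm:MPinf}(i)—which is established independently via the Meyer-Zheng tightness of the finite-rate family in Proposition~\ref{tightness_MZ dual} together with Proposition~\ref{MPinf 2}, Lemma~\ref{lemma:sep} and Lemma~\ref{lemma:finiteness Lambda}—provides an \emph{independent} dual solution $(\tilde\mu,\tilde\nu,\tilde\Lambda)$ of $(\mathbf{MP'})_{\tilde\mu_0,\tilde\nu_0}^\rho$ with rapidly decreasing initial data and the analogous properties.

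Before invoking Proposition~\ref{prop:self-dual} I verify its integrability hypothesis \eqref{upper bound singularity}. For the tempered side, \eqref{singularity 1} supplies the deterministic dominating bound
\[
\E_{\mu_0,\nu_0}\bigl[S_\eps\mu_t(x)\,S_\eps\nu_t(x)\bigr]\;\leq\;S_{t+\eps}\mu_0(x)\,S_{t+\eps}\nu_0(x),
\]
and the right-hand side is uniformly tempered for $t\in[0,T]$, $\eps\in[0,1]$ by Lemma~\ref{lemma estimates}; the rapidly decreasing case is analogous. Proposition~\ref{prop:self-dual} then applies, and the right-hand side of \eqref{self-duality} depends only on the initial data $(\mu_0,\nu_0)$ and on the law of $(\tilde\mu,\tilde\nu)$, not on which of the two solutions has been used. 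Hence for every $T>0$,
\[
\int_0^T \E\bigl[F(\mu_t,\nu_t,\tilde\mu_0,\tilde\nu_0)\bigr]\,dt\;=\;\int_0^T \E\bigl[F(\mu'_t,\nu'_t,\tilde\mu_0,\tilde\nu_0)\bigr]\,dt.
\]
Both integrands are bounded by one and right-continuous in $t$ (since $\mu,\nu$ are c\`adl\`ag), so Lebesgue differentiation combined with right-continuity yields the pointwise identity $\E[F(\mu_t,\nu_t,\tilde\mu_0,\tilde\nu_0)]=\E[F(\mu'_t,\nu'_t,\tilde\mu_0,\tilde\nu_0)]$ for every $t\geq 0$.

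Next, I argue that as $(\tilde\mu_0,\tilde\nu_0)$ ranges over $(\calB_\rap^+)^2$ the family $F(\cdot,\cdot,\tilde\mu_0,\tilde\nu_0)=\exp\langle\langle\cdot,\cdot,\tilde\mu_0,\tilde\nu_0\rangle\rangle_\rho$ is distribution-determining on $\calM_\tem^2$. Setting $\tilde\nu_0\equiv 0$ and varying $\tilde\mu_0=\phi\in\calB_\rap^+$ recovers the joint Laplace-Fourier transform $\E\bigl[\exp(z_1\langle\mu_t,\phi\rangle+z_2\langle\nu_t,\phi\rangle)\bigr]$ for the two complex numbers $z_j=-\sqrt{1-\rho}\pm i\sqrt{1+\rho}$, both of which have strictly negative real part; letting $\phi$ vary over a dense subset of $\calB_\rap^+$ identifies the joint law of $(\mu_t,\nu_t)$ by the standard argument of \cite{EF04,DP98}. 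Consequently the one-dimensional marginals of $(\mu,\nu)$ and $(\mu',\nu')$ agree at every $t\geq 0$.

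Finally, to promote this to uniqueness of the full law, I note that the preceding reasoning in fact establishes uniqueness of one-dimensional marginals starting from \emph{any} initial data $(\mu_0,\nu_0)\in\calM_\tem^2$ (or $\calM_\rap^2$) subject to \eqref{singularity 1} and \eqref{finiteness Lambda 1}. Conditioning a solution on $\calF_s$, the shifted process $(\mu_{s+\cdot},\nu_{s+\cdot},\Lambda_{s+\cdot}-\Lambda_s)$ is again a solution of $(\mathbf{MP'})_{\mu_s,\nu_s}^\rho$ inheriting these two properties, so the standard Markov-type argument (cf.\ \cite[Section~4.4]{EK86}) upgrades uniqueness of 1D marginals to uniqueness of finite-dimensional distributions, hence of the law on path space. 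The increasing process $\Lambda$ is then determined by $(\mu,\nu)$ directly through \eqref{MP9}: the term $\int_{[0,t]\times\R}F(\mu_s,\nu_s,\phi,\psi)\phi(x)\psi(x)\,\Lambda(ds,dx)$ equals an explicit functional of $(\mu,\nu)$ minus a martingale (so it is uniquely specified by $(\mu,\nu)$), and varying $\phi,\psi$ over $\bigl(\calC_\rap^{(2)}\bigr)^+$ recovers $\Lambda$ itself since products $\phi\psi$ separate measures. The main obstacle in this plan is the Markov-style extension from one-dimensional to finite-dimensional marginals, which requires a careful verification that the ``separation of types'' condition \eqref{singularity 1} is preserved under conditioning on $\calF_s$; this hinges on the deterministic dominating bound $\E[S_\eps\mu_t(x)S_\eps\nu_t(x)\mid\calF_s]\le S_{t-s+\eps}\mu_s(x)S_{t-s+\eps}\nu_s(x)$, which follows from the conditional version of \eqref{singularity 1} for the shifted solution.
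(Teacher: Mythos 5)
Your proposal is correct and follows essentially the same route as the paper's proof: construct the dual process from the Meyer-Zheng existence results, apply the approximate self-duality of Proposition~\ref{prop:self-dual} to equate the time-integrated duality functionals of the two solutions, differentiate using right-continuity, invoke that the family $F(\cdot,\cdot,\tilde u_0,\tilde v_0)$ is measure-determining on $\calM_\tem^2$, extend to finite-dimensional distributions by a Markov/Bass-type argument, and finally recover $\Lambda$ from the martingale problem. The only deviations are cosmetic: you verify hypothesis \eqref{upper bound singularity} explicitly where the paper leaves it implicit, you recover $\Lambda$ from \eqref{MP9} rather than \eqref{MP1} (which is arguably more appropriate for solutions of $(\mathbf{MP'})_{\mu_0,\nu_0}^\rho$), and you flag that the extension to finite-dimensional laws requires the conditioned process to inherit \eqref{singularity 1} --- a point the paper likewise delegates to the cited reference.
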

\begin{pf}
Let $(\mu,\nu,\Lambda)$ and $(\mu',\nu',\Lambda')$ be any two solutions
to $(\mathbf{MP'})_{\mu_0,\nu_0}^\rho$, with (possibly random) initial
conditions $(\mu_0,\nu_0)\in\calM_\tem^2$, which satisfy condition~\eqref{singularity 1}. By Propositions \ref{tightness_MZ dual} and
\ref
{MPinf 2}, we know that for any $(\tilde u_0,\tilde v_0)\in(\calB
_\rap
^+)^2$, there exists a solution $(\tilde\mu_t,\tilde\nu_t)_t\in
D_{[0,\infty)}(\calM_{\rap}^2)$
of the martingale problem $(\mathbf{MP'})_{\tilde u_0,\tilde v_0}^\rho
$, which by Lemma~\ref{lemma:sep} satisfies also the ``separation of
types'' condition~\eqref{singularity 1}.
Note that \eqref{singularity 1} ensures that both assumptions \eqref
{singularity4} and \eqref{upper bound singularity} of Proposition~\ref
{prop:self-dual} hold [for \eqref{upper bound singularity}, use
Lemma~\ref{lemma estimates}(a) in the \hyperref[app]{Appendix}].
Consequently, we can apply the self-duality of Proposition~\ref
{prop:self-dual} to conclude that for all $(\tilde u_0,\tilde v_0)\in
(\calB_\rap^+)^2$, we have
\begin{eqnarray*}
\int_0^T \E \bigl[F(\mu_t,
\nu_t,\tilde u_0,\tilde v_0) \bigr]\,dt&=&\lim
_{\eps\downarrow0}\int_0^T\E
\bigl[F(S_\eps\mu_0,S_\eps\nu _0,
\tilde\mu _t,\tilde\nu_t) \bigr]\,dt
\\
&=& \int_0^T\E \bigl[F\bigl(
\mu'_t,\nu'_t,\tilde
u_0,\tilde v_0\bigr) \bigr]\,dt,\qquad  T\ge0.
\end{eqnarray*}
Differentiating, we get
%
\begin{equation}
\E \bigl[F(\mu_t,\nu_t,\tilde u_0,\tilde
v_0) \bigr] = \E \bigl[F\bigl(\mu '_t,
\nu'_t,\tilde u_0,\tilde v_0
\bigr) \bigr]
\end{equation}
first for Lebesgue-a.e. $t>0$ and then, by right-continuity, for all $t>0$.
Since for $\rho\in(-1,1)$, the family of functions $ \{F(\cdot
,\cdot;\tilde u_0,\tilde v_0)\dvtx(\tilde u_0,\tilde v_0)\in(\calB
_\rap
^+)^2 \}$
is measure-determining for $\calM_\tem^2$ (see, e.g., \cite
{Dawsonetal2003}, proof of Lemma~3.1), it follows that the
one-dimensional distributions of $(\mu,\nu)$ and $(\mu',\nu')$
coincide. Arguing as in \cite{Bass98}, proof of Theorem VI.3.2, this
can be easily extended to the finite-dimensional distributions; thus
$(\mu,\nu)$ and $(\mu',\nu')$ have the same law on $D_{[0,\infty
)}(\calM
_\tem^2)$.

The proof for initial conditions in $\calM_\rap$ is completely analogous.
\end{pf}

\section{Bounds on the width of the interface}\label{ssn:width}

In this section, we will prove the $p$th moment estimate on
the approximate width of the interface $(R_t(\eps) - L_t(\eps))$
of Theorem~\ref{thmm:width}
using the fourth moment estimates established in
Proposition~\ref{mixed_moments}.
Since we are interested in the dependence of the constants on $\gamma$,
we write as above $(u^\sse{\gamma}_t, v^\sse{\gamma}_t)$ for a solution
of $\operatorname{cSBM}(\rho,\gamma)$ and moreover
define
\[
L_t^\sse{\gamma}(\eps) = \inf \biggl\{ x \dvtx\int
_{-\infty}^x u^\sse{\gamma
}_t(y) v^\sse{\gamma}_t(y) \,dy\geq\eps \biggr\}
\wedge R\bigl(u^\sse {\gamma }_t,v^\sse{
\gamma}_t\bigr)
\]
and
\[
R_t^\sse{\gamma}(\eps) = \sup \biggl\{ x \dvtx\int
_x^\infty u^\sse {\gamma
}_t(y) v^\sse{\gamma}_t(y) \,dy\geq\eps \biggr\}
\vee L\bigl(u^\sse{\gamma }_t,v^\sse{
\gamma}_t\bigr).
\]

\begin{pf*}{Proof of Theorem~\ref{thmm:width}}
First, we prove the statement for the case $\gamma= 1$, and
at the end we will deduce the statement for general $\gamma$ using a
scaling argument.
Therefore, we write $(u_t, v_t) := (u^\sse{1}_t, v^\sse{1}_t)$ and
$(R_t, L_t) := (R_t^\sse{1}, L_t^\sse{1})$.
We recall from~\eqref{eq:reforml} and (\ref{eq:0202-2}) in the proof of
Proposition~\ref{mixed_moments}
(for the system with branching rate $1$)
that since $\rho< - \frac{1}{\sqrt{2}}$,\vadjust{\goodbreak} for any $\tilde\eps\in
(0,\frac{1}{2})$
there exists a constant $ C(\rho, \tilde\eps) > 0$ such that for all $z
> 0$ and $t \geq0$,
\begin{eqnarray*}
\label{eq:0403-1}&&\E_{\1_{\R^-},\1_{\R^+}} \biggl[ \int_\R
u_t(x) v_t(x) u_t(x + z) v_t(x+z)
\,dx \biggr]
\\
&&\qquad =\E_{\1_{\R^-},\1_{\R^+}} \biggl[ \int_\R u_t(x)
v_t(x) u_t(x - z) v_t(x -z) \,dx \biggr]
\\
&&\qquad \leq C(\rho, \tilde\eps) \bigl(1 \wedge z^{-2(1-\tilde\eps)}\bigr).
\end{eqnarray*}
Defining for $q \in(0,1)$
\[
I_q(t) := \int_\R\int_\R|x-y|^q
u_t(x) v_t(x) u_t(y) v_t(y) \,dx \,dy
\]
and choosing $\tilde{\eps} =\frac{1}{4}(1-q) $, the estimate
in~(\ref
{eq:0403-1}) shows that for all $t \geq0$,
\begin{eqnarray*}
\E_{\1_{\R^-},\1_{\R^+}} \bigl[ I_q(t) \bigr]& =& 2 \int
_0^\infty|z|^q \E_{\1
_{\R^-},\1_{\R^+}}
\biggl[ \int_\R u_t(x) v_t(x)
u_t(x + z) v_t(x+z) \,dx \biggr] \,dz
\\
& \leq& C\biggl(\rho,\frac{1}4 (1-q)\biggr) \int_0^\infty
z^q \bigl(1 \wedge z^{-2(1-\tilde{\eps})}\bigr) \,dz
\\
& \leq& C\biggl(\rho,\frac{1}4(1-q)\biggr) \biggl( 1 + \int
_1^\infty z^{-2 +
2\tilde
{\eps} + q} \,dz \biggr)
\\
& =& C\biggl(\rho,\frac{1}4 (1-q)\biggr) \biggl( 1+ \frac{2}{1-q}
\biggr) < \infty
\end{eqnarray*}
since by our choice of $\tilde{\eps}$ we have $2\tilde{\eps}+q =
\frac{1}{2}
+\frac{1}{2} q < 1$.
Fix $z > 0$. Then on the event that $R_t(\eps) - L_t(\eps) > z$, we
can estimate
using the definition of $L_t(\eps), R_t(\eps)$ that
\[
I_q(t) \geq z^q \int_{-\infty}^{L_t(\eps)}
u_t(x) v_t(x) \,dx \int_{R_t(\eps)}^\infty
u_t(y) v_t(y) \,dy \geq\eps^2 z^q.
\]
Hence we can conclude that
\begin{eqnarray*}
\p_{\1_{\R^-},\1_{\R^+}} \bigl\{ R_t(\eps) - L_t(\eps ) > z
\bigr\}& \leq&\eps ^{-2} z^{-q} \E_{\1_{\R^-},\1_{\R^+}}\bigl[
I_q(t)\1_{ \{ R_t(\eps) -
L_t(\eps) > z\}} \bigr]
\\
& \leq&\eps^{-2} z^{-q} \E_{\1_{\R^-},\1_{\R^+}}\bigl[
I_q(t)\bigr] \leq \tilde C(\rho,q) \eps^{-2}
z^{-q},
\end{eqnarray*}
where we define $\tilde C(\rho,q) := C(\rho, \frac{1}4 (1-q)) (1 +
\frac
{2}{1-q})$.
Thus, we have by Fubini that for any $0< p< q < 1$,
\begin{eqnarray*}
\E_{\1_{\R^-},\1_{\R^+}} \bigl[ \bigl(\bigl(R_t(\eps) - L_t(
\eps)\bigr)^+\bigr)^p \bigr] & =& p\int_0^\infty
z^{p-1} \p_{\1_{\R^-},\1_{\R^+}} \bigl\{ R_t(\eps) -
L_t(\eps) > z \bigr\} \,dz
\\
& \leq& p \int_0^\infty z^{p-1} \bigl(
1 \wedge\tilde C(\rho, q) \eps^{-2} z^{-q } \bigr) \,dz
\\
& =& p \bigl(\tilde C(\rho,q) \eps^{-2}\bigr)^{{p}/{q}} \int
_0^\infty z^{p-1}\bigl(1 \wedge
z^{-q}\bigr) \,dz.
\end{eqnarray*}
Therefore, for any $\delta\in(0,2(1-p))$, by choosing $q = \frac
{2p}{2-\delta} \in(p,1)$ we can find a constant $C(\rho, p, \delta)$
such that for all $t \geq0$,
%
\begin{equation}
\label{eq:2507-1} \E_{\1_{\R^-},\1_{\R^+}} \bigl[ \bigl(\bigl(R_t(\eps) -
L_t(\eps)\bigr)^+\bigr)^p \bigr] \leq C(\rho, p, \delta)
\eps^{-2 + \delta}.
\end{equation}
Finally, we return to the case of a general branching rate $\gamma$.
Then, by the scaling property~\eqref{scaling property}, we have that
\begin{eqnarray*}
L_t^\sse{\gamma}(\eps) & =& \inf \biggl\{ x \dvtx \int
_{-\infty}^x u^\sse {\gamma}_t(y)
v^\sse{\gamma}_t(y) \,dy\geq\eps \biggr\} \wedge R
\bigl(u^\sse {\gamma}_t,v^\sse{
\gamma}_t\bigr)
\\
& \stackrel{d} {=}& \inf \biggl\{ x \dvtx\int_{-\infty}^x
u^\sse {1}_{\gamma^2
t}(\gamma y) v^\sse{1}_{\gamma^2 t}(
\gamma y) \,dy\geq\eps \biggr\} \wedge \frac{1} \gamma R
\bigl(u^\sse{1}_{\gamma^2 t},v^\sse{1}_{\gamma^2 t}
\bigr) \\
&=& \frac{1}{\gamma} L^\sse{1}_{\gamma^2 t} (\gamma\eps).
\end{eqnarray*}
Similarly, $R_t^\sse{\gamma}(\eps) \stackrel{d}{=} \frac{1} \gamma
R_{\gamma^2t}^\sse{1}(\gamma\eps)$.
Hence, by~\eqref{eq:2507-1} (which holds for branching rate~$1$), we
can deduce that
\begin{eqnarray*}
&&\E_{\1_{\R^-},\1_{\R^+}} \bigl[ \bigl(\bigl(R^\sse{\gamma }_t(
\eps) - L^\sse {\gamma}_t(\eps)\bigr)^+\bigr)^p
\bigr] \\
&&\qquad = \gamma^{-p} \E_{\1_{\R^-},\1_{\R^+}} \bigl[ \bigl(
\bigl(R^\sse{1}_{\gamma
^2t}(\gamma\eps) - L^\sse{1}_{\gamma^2 t}(
\gamma\eps)\bigr)^+\bigr)^p \bigr]
\\
&&\qquad \leq C(\rho, p, \delta) \eps^{-2 + \delta} \gamma^{- ( 2 + p -
\delta
) }.
\end{eqnarray*}
\upqed\end{pf*}

\begin{appendix}\label{app}
\section*{Appendix}
\subsection{Notation and spaces of functions and measures}\label{appendix0}

In this appendix, for the convenience of the reader, we have collected
our notation, and we recall some well-known facts concerning the spaces
of functions and measures employed throughout the paper. Most of the
material in this subsection can be found, for example, in \cite
{Dawsonetal2002,Dawsonetal2003} or \cite{EF04}.

For $\lambda\in\mathbb{R}$,
let
\[
\phi_{\lambda}(x) := e^{- \lambda|x|},\qquad  x \in\mathbb{R},
\]
and for $f\dvtx\mathbb{R}\rightarrow\mathbb{R}$, define
\[
|f|_{\lambda} := \Vert f / \phi_{\lambda}\Vert_{\infty},
\]
where $\Vert\cdot\Vert_{\infty}$ is the supremum norm.
Let $\mathcal{B}_{\lambda}$ denote the space of all measurable
functions $f\dvtx\mathbb{R} \rightarrow\mathbb{R}$ such that
$|f|_{\lambda}
<\infty$ and with the property that $f(x) / \phi_{\lambda}(x)$ has a
finite limit
as $|x| \rightarrow\infty$. Next, introduce the spaces
%
\begin{equation}
\mathcal{B}_{\rap} := \bigcap_{\lambda> 0}
\mathcal{B}_{\lambda} \quad\mbox{and} \quad\mathcal{B}_{\tem} := \bigcap
_{\lambda> 0} \mathcal{B}_{- \lambda}
\end{equation}
of
\textit{rapidly decreasing} and \textit{tempered} measurable
functions,
respectively.

We write $\mathcal{C}_{\lambda}, \mathcal{C}_{\rap}, \mathcal
{C}_{\tem}$
for the subspaces of continuous functions in $\calB_\lambda$, $\calB
_\rap$, $\calB_\tem$, respectively.
If we additionally require that all partial derivatives up to order
$k\in\N$ exist and belong to $\calC_\lambda,\calC_\rap,\calC
_\tem$, we write
$\calC_\lambda^{(k)},\mathcal{C}_{\rap}^{(k)}, \mathcal{C}_{\tem}^{(k)}$.
We will also use the space $\calC_c^\infty$ of infinitely
differentiable functions with compact support. If $\calF$ is any of the
above spaces of functions, the notation $\calF^+$ will refer to the
subset of nonnegative elements of $\calF$.

For each $\lambda\in\mathbb{R}$, the linear space $\mathcal
{C}_{\lambda}$ endowed
with the norm $| \cdot|_{\lambda}$ is a separable Banach space, and
the space $\mathcal{C}_{\rap}$ is topologized by the metric
%
\begin{equation}
d_{\rap}^{\mathcal{C}}(f,g) := \sum_{n = 1}^{\infty}
2^{-n}\bigl(|f-g|_{n} \wedge1\bigr),\qquad f,g \in\mathcal{C}_{\rap},
\end{equation}
which turns it into a Polish space. Analogously, $\mathcal{C}_{\tem}$
is Polish if we topologize it with the metric
%
\begin{equation}
d_{\tem}^{\mathcal{C}}(f,g) := \sum_{n=1}^{\infty}
2^{-n}\bigl(|f-g|_{-1/n} \wedge1\bigr), \qquad f,g \in\mathcal{C}_{\tem}.
\end{equation}

Let $\mathcal{M} $ denote the space of
(nonnegative) Radon
measures on $\mathbb{R}$.
For $\mu\in\calM$ and a measurable function $f$, we will use any of the
following notation:
\[
\langle\mu, f \rangle, \qquad \int_\R\mu(dx) f(x),\qquad \int
_\R f(x) \mu(dx)
\]
to denote the integral of $f$ with
respect to the measure $\mu$ (if it exists). For integrals with respect
to the Lebesgue measure $\ell$ on $\R$,
we will simply write $dx$ in place of $\ell(dx)$. If $\mu\in\calM$ is
absolutely continuous w.r.t. $\ell$, we will identify $\mu$ with its
density, writing
\[
\mu(dx)=\mu(x) \,dx.
\]

For $\lambda\in\R$, define
\[
\calM_\lambda:= \bigl\{\mu\in\calM\dvtx\langle\mu,\phi_\lambda
\rangle< \infty \bigr\},
\]
and introduce the spaces
\[
\mathcal{M}_{\tem} := \bigcap_{\lambda> 0}
\calM_\lambda,\qquad \calM _\rap :=\bigcap
_{\lambda>0}\calM_{-\lambda}
\]
of \textit{tempered} and \textit{rapidly decreasing measures}, respectively.
These spaces of measures are topologized as follows: Let $d_0$ be a
complete metric on $\mathcal{M}$ inducing the vague topology, and define
%
\begin{equation}
\label{defn:top_tem} d_{\tem}^{\mathcal{M}}(\mu,\nu) := d_0(
\mu, \nu) + \sum_{n=1}^{\infty}
2^{-n}\bigl(|\mu- \nu|_{1/n} \wedge1\bigr),\qquad \mu, \nu\in
\mathcal{M}_{\tem},
\end{equation}
where we write
\[
|\mu- \nu|_{\lambda} := \bigl|\langle\mu, \phi_{\lambda} \rangle- \langle
\nu, \phi_{\lambda} \rangle\bigr|.
\]
Note that with the above metric, $(\mathcal{M}_{\tem}, d_{\tem
}^{\mathcal{M}})$ is also Polish, and it is easily seen that $\mu
_n\to
\mu$ in $\calM_\tem$ if and only if $\langle\mu_n,\varphi\rangle
\to
\langle\mu,\varphi\rangle$ for all $\varphi\in\bigcup_{\lambda> 0}
\calC_{\lambda}$.
Denote by $\mathcal{M}_f$
the space of finite measures on
$\mathbb{R}$ endowed with the topology of weak convergence. Note that
we have $\calM_\rap\subseteq\calM_f$. The space $\calM_\rap$ is then
topologized by saying that $\mu_n\to\mu$ in $\calM_\rap$ if and
only if
$\mu_n\to\mu$ in $\calM_f$ (w.r.t. the weak topology) and $\sup_{n\in\N
}\langle\mu_n,\phi_\lambda\rangle<\infty$ for all $\lambda<0$;
see \cite
{Dawsonetal2003}, page 140. It is easy to see that this topology is
also induced by the metric
%
\begin{equation}
d_{\rap}^{\mathcal{M}}(\mu,\nu) := \widetilde{d}_0(\mu,
\nu) + \sum_{n=1}^{\infty} 2^{-n}\bigl(|
\mu- \nu|_{-n} \wedge1\bigr), \qquad\mu, \nu\in \mathcal {M}_{\rap},
\end{equation}
where $\widetilde{d}_0$ is a complete metric on $\calM_f$ inducing the
weak topology.
Again, when endowed with this metric $(\mathcal{M}_{\rap}, d_{\rap
}^{\mathcal{M}})$ becomes a Polish space.

It is clear that $\calC^+_\tem$ may be viewed as a subspace of $\calM
_\tem$ by taking a function $u\in\calC_\tem^+$ as a density w.r.t.
Lebesgue measure, that is, by identifying it with the measure $u(x)\,dx$.
It is also clear that the topology of $\calM_\tem$ restricted to
$\calC
_\tem^+$ is weaker than the topology on $\calC_\tem$ introduced above.
The same holds for the relation between $\calC_\rap^+$ and $\calM
_\rap
$. Thus we have \emph{continuous} embeddings $\calC^+_\tem
\hookrightarrow\calM_\tem$ and $\calC_\rap^+\hookrightarrow\calM
_\rap$.

Let $(p_t)_{t\ge0}$ denote the heat
kernel in $\mathbb{R}$ corresponding to $\frac{1}{2} \Delta$,
%
\begin{equation}
p_t(x) = \frac{1}{(2 \pi t)^{1/2}} \exp \biggl\{- \frac{|x|^2}{2 t} \biggr
\},\qquad t > 0, x \in\mathbb{R},
\end{equation}
and write $ (S_t)_{ t \geq0}$ for the associated heat semigroup (i.e.,
the transition semigroup of Brownian motion).
For $\mu\in\calM$ and $x\in\R$, let $S_t\mu(x):=\int_\R p_t(x-y)
\mu
(dy)$. The following estimates are well known and can be proved as in
Appendix A of~\cite{Dawsonetal2003} (see also \cite{Shiga94}, Lemma~6.2(ii)):

\begin{lemma}\label{lemma estimates}
Fix $\lambda\in\R$ and $T>0$.
\begin{longlist}[(a)]
\item[(a)]
For all $\varphi\in\calB_{\lambda}^+$, we have
%
\begin{equation}
\label{estimate 1} \sup_{t\in[0,T]}S_t\varphi(x)\le C\bigl(
\lambda,T\bigr) |\varphi|_\lambda \phi _\lambda(x),\qquad x\in\R.
\end{equation}
Moreover, there is a positive constant $C'(\lambda,T)>0$ such that we
have a lower bound
%
\begin{equation}
\label{estimate 1a} \inf_{t\in[0,T]}S_t\phi_{\lambda}(x)
\ge C'(\lambda,T) \phi _{\lambda}(x) ,\qquad x\in\R.
\end{equation}
\item[(b)]
Let $0<\eps<T$. Then for
all $\mu\in\calM_\lambda$ we have
%
\begin{equation}
\label{estimate 2} \sup_{t\in[\eps,T]}S_t\mu(x) \le C(
\lambda,T,\eps) \langle\mu,\phi_{\lambda}\rangle\phi _{-\lambda
}(x),\qquad  x
\in\R.
\end{equation}
\end{longlist}
In particular, the heat semigroup preserves the space $\calB_\lambda$
and maps $\calM_\lambda$ into $\calB_\lambda$.
\end{lemma}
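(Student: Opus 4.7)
My plan is to reduce all three displayed bounds to elementary Gaussian integral estimates, exploiting the two-sided inequality $\big||y|-|x|\big|\le|x-y|$ to factor the weight $\phi_\lambda$ out of the heat-kernel convolution. For part (a), since $0\le\varphi(y)\le|\varphi|_\lambda\,\phi_\lambda(y)$ by definition of the norm, it suffices to bound $S_t\phi_\lambda$. Writing
\[
S_t\phi_\lambda(x)=\int p_t(z)\,e^{-\lambda|x-z|}\,dz
\]
and using $-\lambda|x-z|\le-\lambda|x|+|\lambda||z|$ (valid for both signs of $\lambda$), I would pull out the factor $\phi_\lambda(x)=e^{-\lambda|x|}$ and be left with $\int p_t(z)e^{|\lambda||z|}\,dz\le 2\,\E\bigl[e^{|\lambda||B_t|}\bigr]\le 2e^{\lambda^2T/2}$, which supplies the constant $C(\lambda,T)$.

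For the lower bound in (a), the same triangle inequality applied with the opposite sign, namely $-\lambda|x-z|\ge-\lambda|x|-|\lambda||z|$, gives
\[
S_t\phi_\lambda(x)\ge\phi_\lambda(x)\cdot\E\bigl[e^{-|\lambda||B_t|}\bigr].
\]
By the Brownian scaling identity $|B_t|\stackrel{d}{=}\sqrt{t}\,|B_1|$, the expectation $\E[e^{-|\lambda|\sqrt{t}|B_1|}]$ is a decreasing function of $t$, so its infimum on $[0,T]$ equals $\E[e^{-|\lambda|\sqrt{T}|B_1|}]>0$, which is the desired constant $C'(\lambda,T)$.

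For part (b) the key point is that the singularity of $p_t$ at $t=0$ is avoided since $t\ge\eps>0$. I would prove the pointwise estimate
\[
p_t(x-y)\le C(\lambda,T,\eps)\,\phi_\lambda(y)\,\phi_{-\lambda}(x),\qquad t\in[\eps,T],\,x,y\in\R,
\]
from which integrating against $\mu(dy)$ yields the claim. Rearranging, this is equivalent to a uniform bound on $p_t(x-y)\,e^{\lambda(|y|-|x|)}$. Using $|y|-|x|\le|x-y|$ and $(2\pi t)^{-1/2}\le(2\pi\eps)^{-1/2}$, it reduces to bounding $\sup_{z\in\R}\exp\bigl(-z^2/(2T)+|\lambda||z|\bigr)$, which by completing the square equals $e^{\lambda^2T/2}$. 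Thus $C(\lambda,T,\eps)=(2\pi\eps)^{-1/2}e^{\lambda^2T/2}$ works.

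For the final assertion that $S_t$ preserves $\calB_\lambda$ and maps $\calM_\lambda$ into $\calB_\lambda$, the estimates just derived already control the weighted supremum norms $|S_t\varphi|_\lambda$ and $|S_t\mu|_{-\lambda}$; what remains is showing that $S_t\varphi(x)/\phi_\lambda(x)$ (respectively $S_t\mu(x)/\phi_{-\lambda}(x)$) admits a finite limit as $x\to\pm\infty$. I would verify this by the change of variables $y=x+z$ inside the convolution integral: for $x\to+\infty$ one has $|x+z|=x+z$ eventually for each fixed $z$, so $\phi_\lambda(x+z)/\phi_\lambda(x)\to e^{-\lambda z}$, and the proved bounds furnish an integrable majorant allowing dominated convergence. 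The analogous argument handles $x\to-\infty$. I expect this limit-existence verification to be the only technically delicate part, since it requires separating the one-sided asymptotics of $\varphi$ (or of $\mu$, which must have the appropriate tail behaviour inherited from $\langle\mu,\phi_\lambda\rangle<\infty$); everything else is direct Gaussian manipulation.
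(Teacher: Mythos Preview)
Your proof is correct. The paper does not actually prove this lemma; it simply states that the estimates are well known and refers to Appendix~A of \cite{Dawsonetal2003} and \cite[Lemma~6.2(ii)]{Shiga94}. Your argument is the standard direct one --- factoring out $\phi_\lambda(x)$ via the triangle inequality $\big||x|-|z|\big|\le|x-z|$ and reducing everything to Gaussian moment-generating-function bounds --- and this is essentially what those references do as well.

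Two minor remarks. First, in your upper bound for part (a) you wrote $\int p_t(z)e^{|\lambda||z|}\,dz\le 2\,\E[e^{|\lambda||B_t|}]$; the left-hand side \emph{is} that expectation, so the factor $2$ should enter only in the next step, via $e^{|\lambda||z|}\le e^{|\lambda|z}+e^{-|\lambda|z}$, giving $\E[e^{|\lambda||B_t|}]\le 2e^{\lambda^2 t/2}$. This does not affect the conclusion. Second, you correctly observed that estimate~(b) controls $|S_t\mu|_{-\lambda}$ rather than $|S_t\mu|_{\lambda}$; the lemma's concluding sentence about $S_t$ mapping $\calM_\lambda$ into $\calB_\lambda$ should indeed be read with the appropriate sign, and at the level of the tempered/rapidly decreasing spaces $\calM_\tem\to\calB_\tem$ and $\calM_\rap\to\calB_\rap$ (which is how it is used elsewhere in the paper) everything is consistent.
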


For $T > 0$ and $\lambda\in\mathbb{R}$, let $\mathcal{C}_{T,
\lambda
}^{(1,2)}$ denote
the space of real-valued functions $\psi$ defined on $[0,T] \times
\mathbb{R}$ such that
$t \mapsto\psi_t(\cdot)$, $t \mapsto\partial_t \psi_t(\cdot)$
and $t
\mapsto\Delta\psi_t(\cdot)$
are continuous $\mathcal{C}_{\lambda}$-valued functions, and define
\[
\mathcal{C}^{(1,2)}_{T, \rap} := \bigcap
_{\lambda> 0} \mathcal {C}^{(1,2)}_{T, \lambda},\qquad
\mathcal{C}^{(1,2)}_{T, \tem} := \bigcap
_{\lambda> 0} \mathcal {C}^{(1,2)}_{T, -\lambda}.
\]

The following is a simple corollary of Lemma~\ref{lemma estimates}:

\begin{corollary}\label{cor estimates}
Fix $\lambda\in\R$ and $T>0$.
\begin{longlist}[(a)]
\item[(a)] For all $\varphi\in\calC_\lambda^{(2)}$, the function
\[
\psi_t(x):=S_{T-t}\varphi(x),\qquad t\in[0,T], x\in\R
\]
is in $\calC^{(1,2)}_{T,\lambda}$.
\item[(b)] For all $\mu\in\calM_\lambda$ and $\eps>0$, the function
\[
\psi_t(x):=S_{T-t}\mu(x),\qquad t\in[0,T-\eps], x\in\R
\]
is in $\calC^{(1,2)}_{T-\eps,\lambda}$.
\end{longlist}
\end{corollary}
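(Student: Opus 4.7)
The plan is to derive both parts from a single ingredient—the strong continuity of the heat semigroup $(S_s)_{s\ge 0}$ on the weighted Banach space $(\calC_\lambda, |\cdot|_\lambda)$—combined with the commutation identities $\Delta S_s\varphi = S_s\Delta\varphi$ and $\partial_s S_s\varphi = \tfrac{1}{2}S_s\Delta\varphi$ valid on $\calC_\lambda^{(2)}$, and, for part (b), the smoothing effect of the kernel $p_{\eps/2}$.

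For part (a), given $\varphi\in\calC_\lambda^{(2)}$, I will first justify the commutation identities by differentiating the convolution $S_s\varphi(x)=\int p_s(x-y)\varphi(y)\,dy$ under the integral sign, using $\partial_x^2 p_s(x-y)=\partial_y^2 p_s(x-y)$ together with integration by parts; the differentiations are legitimate because $\varphi,\partial_x\varphi,\partial_x^2\varphi$ all satisfy the pointwise weighted bound of Lemma~\ref{lemma estimates}(a) while the Gaussian derivatives decay super-exponentially. This rewrites the three maps to be controlled as
\[
\psi_t=S_{T-t}\varphi,\qquad \partial_t\psi_t=-\tfrac{1}{2}S_{T-t}(\Delta\varphi),\qquad \Delta\psi_t=S_{T-t}(\Delta\varphi),
\]
each of the form $t\mapsto S_{T-t}f$ with $f\in\calC_\lambda$. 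It thus suffices to show that for every $f\in\calC_\lambda$ the map $s\mapsto S_sf$ is continuous from $[0,T]$ into $(\calC_\lambda,|\cdot|_\lambda)$. By the semigroup property and the uniform bound $|S_sf|_\lambda\le C(\lambda,T)|f|_\lambda$ from Lemma~\ref{lemma estimates}(a), this reduces to $|S_sf-f|_\lambda\to 0$ as $s\downarrow 0$. Setting $\tilde f:=f/\phi_\lambda$, which is continuous and bounded with a finite limit at $\pm\infty$, I would split the supremum over $x$ into $|x|\le R$ and $|x|>R$: on the bounded region, uniform continuity of $\tilde f$ on compacta combined with the concentration of $p_s$ near $0$ yields uniform convergence; on the complement, the near-constancy of $\tilde f$ at infinity together with the Gaussian convolution bound $\int p_s(z)e^{|\lambda z|}\,dz\le C(\lambda)$ for $s\in[0,T]$ gives the same conclusion after first choosing $R$ large and then sending $s\downarrow 0$.

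For part (b), the strategy is to absorb the regularity cost into the kernel $p_{\eps/2}$ and reduce to part (a). Setting $\varphi:=S_{\eps/2}\mu$ for $\mu\in\calM_\lambda$ and differentiating $\varphi(x)=\int p_{\eps/2}(x-y)\,\mu(dy)$ under the integral using the elementary estimate $|\partial_x^k p_{\eps/2}(z)|\le C_k(\eps)\,p_\eps(z)$, I can apply Lemma~\ref{lemma estimates}(b) to $p_\eps$ to conclude that $\varphi$ and its first two spatial derivatives are continuous and satisfy the weighted bound required for membership in $\calC_\lambda$; hence $\varphi\in\calC_\lambda^{(2)}$. Since $S_{T-t}\mu=S_{(T-\eps/2)-t}\varphi$ for $t\in[0,T-\eps]$, invoking part (a) with $T$ replaced by $T-\eps/2$ gives $\psi\in\calC^{(1,2)}_{T-\eps/2,\lambda}$, and restriction to $[0,T-\eps]$ yields the claim. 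The only nontrivial step throughout is the strong-continuity statement at $s=0$ in part (a); everything else is routine manipulation of explicit Gaussian convolutions once that ingredient is established.
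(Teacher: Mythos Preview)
The paper gives no proof beyond declaring the result ``a simple corollary of Lemma~\ref{lemma estimates}'', so your argument supplies the details the paper omits. Your treatment of part (a)---reducing all three required maps to $t\mapsto S_{T-t}f$ with $f\in\calC_\lambda$ via the commutation identities, and then establishing strong continuity of $(S_s)_{s\ge0}$ on $(\calC_\lambda,|\cdot|_\lambda)$---is correct and is exactly the argument one would write out. Your strategy for (b), factoring $S_{T-t}\mu=S_{(T-\eps/2)-t}\bigl(S_{\eps/2}\mu\bigr)$ and reducing to (a), is also the natural route.

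There is, however, a sign slip in your invocation of Lemma~\ref{lemma estimates}(b). That estimate gives $S_\eps\mu(x)\le C\langle\mu,\phi_\lambda\rangle\,\phi_{-\lambda}(x)$, i.e.\ control by $\phi_{-\lambda}$, not $\phi_\lambda$. Your bound $|\partial_x^k\varphi|\le C_k(\eps)S_\eps\mu$ therefore yields $\varphi=S_{\eps/2}\mu\in\calC_{-\lambda}^{(2)}$ and hence $\psi\in\calC^{(1,2)}_{T-\eps,-\lambda}$, not $\calC^{(1,2)}_{T-\eps,\lambda}$ as you claim. This is in fact a typo in the paper's own statement rather than a flaw in your reasoning: part (b) as written is false for $\lambda>0$ (take $\mu$ to be Lebesgue measure, so that $\mu\in\calM_\lambda$ but $S_t\mu\equiv1\notin\calC_\lambda$). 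In every application the paper makes, one intersects over $\lambda>0$ to land in $\calC_\tem^{(2)}$ or $\calC_\rap^{(2)}$, where the distinction $\lambda\leftrightarrow-\lambda$ disappears, so the slip is harmless in context; but your sentence ``apply Lemma~\ref{lemma estimates}(b) \ldots to conclude \ldots membership in $\calC_\lambda$'' should read $\calC_{-\lambda}$.
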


For a Polish space $E$ and $I\subseteq\R$, we denote by $D_I(E)$,
respectively $\calC_I(E)$, the space of c\`adl\`ag, respectively
continuous, $E$-valued paths $t\mapsto f_t$, $t\in I$. (In our case, we
will always have $I=[0,\infty)$ or $I=(0,\infty)$ and $E\in\{(\calC
^+_\tem)^m,(\calC^+_\rap)^m, \calM_\tem^m,\calM_\rap^m\}$ for
some power
$m\in\N$.) Endowed with the usual Skorokhod ($J_1$)-topology,
$D_{I}(E)$ is then also Polish. In this paper, we will use the
Skorokhod topology only in restriction to $\calC_{I}(E)$ where it
coincides with the usual topology of locally uniform convergence.

For processes which are c\`adl\`ag but not continuous, we will instead
use the weaker \emph{Meyer--Zheng ``pseudo-path'' topology} on
$D_{[0,\infty)}(E)$. To describe the Meyer--Zheng topology, introduced
in \cite{MZ84}, let $\lambda(dt):= \exp(-t) \,dt$, and let $w(t), t
\in
[0,\infty)$ be an $E$-valued Borel function. Then, a ``pseudo-path''
corresponding to $w$ is the probability law $\psi_w$ on $[0, \infty)
\times E$ given as the image measure of $\lambda$ under
the mapping $ t \mapsto(t, w(t))$. Note that two functions which are
equal Lebesgue-a.e. give rise to the same pseudo-path. Further $w
\mapsto\psi_w$ is one-to-one on the space of c\`adl\`ag paths
$D_{[0,\infty)}(E)$, and thus yields an embedding of $D_{[0,\infty
)}(E)$ into the space of probability measures on $[0, \infty) \times
E$. The induced topology on $D_{[0,\infty)}$ is then called the
pseudo-path topology. Very conveniently, convergence in this topology
is equivalent to convergence in Lebesgue measure; see \cite{MZ84}, Lemma~1.

For $E=\R$,
\cite{MZ84}, Theorem~4, provides a rather convenient sufficient
condition for relative compactness of a sequence of stochastic
processes on $D_{[0,\infty)}(E)$ equipped with this topology.
The condition can be stated as follows: If $(X^\ssup{n}_t)_{t\ge0}$,
$n\in\N$ is a sequence of c\`adl\`ag real-valued stochastic processes,
with $(X^\ssup{n}_t)_{t\ge0}$ adapted to a filtration $(\calF^\ssup
{n})_{t\ge0}$, then Meyer and Zheng require that
%
\begin{equation}
\label{MZ condition} \sup_{n\in\N} \Bigl(V_T
\bigl(X^\ssup{n}\bigr)+\sup_{t\le T}\E
\bigl[\bigl|X_t^\ssup {n}\bigr|\bigr] \Bigr)<\infty
\end{equation}
for all $T>0$. Here $V_T(X^\ssup{n}):=\sup\E [\sum_i\llvert \E
[X^\ssup
{n}_{t_{i+1}}-X_{t_i}^\ssup{n} | \calF^\ssup{n}_{t_i}]\rrvert  ]$,
where the $\sup$ is taken over all partitions of the interval $[0,T]$,
denotes the conditional variation of $X^\ssup{n}$ up to time $T$.
In \cite{Kurtz91}, this tightness criterion was extended to processes
taking values in general separable metric spaces $E$, which is the
version we need for our measure-valued processes. In fact, by \cite{Kurtz91},
Corollary~1.4, we only have to check condition \eqref{MZ
condition} for the coordinate processes and in addition a compact
containment condition in order to obtain tightness of our
measure-valued processes in the pseudopath topology (which again is
equivalent to the topology of convergence in Lebesgue
measure).\footnote
{Note, however, that the main result in \cite{Kurtz91} is much stronger
than just an extension of the Meyer--Zheng tightness criterion to a
general state space $E$. Also note that in \cite{Kurtz91}, equation
(1.7), there seems to be missing a term $\sup_{s\le t}\E[|f_i\circ
X_s^\ssup{n}|]$; cf. equation (1.2).}

\subsection{Martingale problems and Green function
representations}\label{appendix1}

We define all stochastic processes
over a sufficiently rich stochastic basis
$(\Omega, \mathcal{F},\break  (\mathcal{F}_t)_{t \geq0}, \mathbb{P})$ satisfying
the usual hypotheses. If $Y = (Y_t)_{t \geq0}$ is a stochastic
process taking values in $E$ and starting at $Y_0=y\in E$, the law of
$Y$ is denoted $\mathbb{P}_y$,
and we use $\E_y$ to denote the corresponding expectation.

Recall that solutions to the finite rate symbiotic branching model\break 
$\operatorname{cSBM}(\rho,\gamma)$
are characterized by the martingale problem given in \cite{EF04}, Definition~3. Consequently, when the solutions are interpreted as densities
w.r.t. Lebesgue measure, the corresponding measure-valued processes
solve the martingale problem $(\mathbf{MP})_{\mu_0,\nu_0}^\rho$ of
Definition~\ref{defn:MP}. In this
appendix, we have collected some properties of solutions to this
martingale problem which for the finite rate model $\operatorname{cSBM}(\rho
,\gamma)$ can already be found in \cite{EF04}; however, they are in
fact true for \emph{any} solution to $(\mathbf{MP})_{\mu_0,\nu
_0}^\rho$. These are: an extended
martingale problem for space--time functions which in turn implies a
Green function representation, and a (weaker) martingale problem
involving the self-duality function $F$ from \eqref{self-duality
function}; see Proposition~\ref{prop MPinf} below. We include a proof
only for the latter, in order to illustrate the point that the
particular form of the quadratic variation process $(\Lambda_t)_{t\ge
0}$ from Definition~\ref{defn:MP} is irrelevant in this respect.

Recall that we consider the increasing process $t\mapsto\Lambda_t(dx)$
also as a (locally finite) measure $\Lambda(ds,dx)$ on $\R^+\times\R
$, via
\[
\Lambda\bigl([0,t]\times B\bigr):=\Lambda_t(B).
\]
The following ``space--time version'' of the martingale problem
$(\mathbf{MP})_{\mu_0,\nu_0}^\rho$
can be proved by standard arguments; see, for example, \cite
{Dawsonetal2002}, Lemma~42:

\begin{lemma}\label{extended martingale problem}
Fix $\rho\in[-1,1]$ and initial conditions $(\mu_0,\nu_0)\in\calM
_\tem
^2$ (resp., $\calM_\rap^2$). Let $T>0$. If $(\mu_t,\nu_t)_{t\ge
0}\in
\calC_{[0,\infty)}(\calM_{\tem}^2)$ [resp., $\calC_{[0,\infty
)}(\calM
_{\rap}^2)$] is any solution to the martingale problem $(\mathbf
{MP})_{\mu_0,\nu_0}^\rho$, then for
all test functions $\phi,\psi\in\calC_{T,\rap}^{(1,2)}$ (resp.,
$\phi
,\psi\in\calC_{T,\tem}^{(1,2)}$)
we have that
%
\begin{eqnarray}
\label{MP2} \langle\mu_t, \phi_t\rangle&=& \langle
\mu_0,\phi_0\rangle+ \int_0^t
\biggl\langle\mu_s, \frac{1}{2}\Delta\phi_s+
\frac{\partial}{\partial
s}\phi_s\biggr\rangle \,ds
\nonumber
\\
&&{}+ \int_{[0,t]\times\R}
\phi_s(x) M\bigl(d(s,x)\bigr),
\nonumber
\\[-8pt]
\\[-8pt]
\nonumber
\langle\nu_t, \psi_t\rangle&= &\langle\nu_0,
\psi_0\rangle+ \int_0^t \biggl
\langle\nu_s, \frac{1}{2}\Delta\psi_s+
\frac{\partial}{\partial
s}\psi_s\biggr\rangle \,ds
\\
&&{}+ \int_{[0,t]\times\R}
\psi_s(x) N\bigl(d(s,x)\bigr)\nonumber
\end{eqnarray}
for $t\in[0,T]$, where $M(d(s,x))$ and $N(d(s,x))$ are zero-mean
martingale measures
with covariance structure
%
\begin{eqnarray}
\label{Cov2} && \biggl[\int_{[0,\cdot]\times\R}f_s(x) M
\bigl(d(s,x)\bigr) \biggr]_t\nonumber
\\
& &\qquad= \biggl[\int_{[0,\cdot]\times\R}f_s(x) N\bigl(d(s,x)\bigr)
\biggr]_t 
=\int_{[0,t]\times\R}f^2_s(x)
\Lambda(ds,dx),
\nonumber
\\[-8pt]
\\[-8pt]
\nonumber
&& \biggl[\int_{[0,\cdot]\times\R}f_s(x) M\bigl(d(s,x)\bigr),
\int_{[0,\cdot
]\times\R
}g_s(x) N\bigl(d(s,x)\bigr)
\biggr]_t
\\
&&\qquad =\rho\int_{[0,t]\times\R}f_s(x)g_s(x)
\Lambda(ds,dx),\nonumber
\end{eqnarray}
with $\Lambda$ from \eqref{Cov1}. Here, $f$ and $g$ are predictable
functions defined on $\Omega\times\mathbb{R}_+ \times\mathbb{R}$
such that
%
\begin{equation}
\mathbb{E}_{\mu_0,\nu_0} \biggl[ \int_{[0,t]\times\R}
f_s^2(x) \Lambda (ds,dx) \biggr] < \infty,\qquad t \in[0,T].
\end{equation}
\end{lemma}

The previous lemma immediately implies
a Green function representation for solutions to the martingale problem
$(\mathbf{MP})_{\mu_0,\nu_0}^\rho$
[recall that $(S_t)_{t\ge0}$ denotes the heat semigroup]:

\begin{corollary}[(Green function representation)]\label{cor:Green
function representation}
Under the assumptions of Lemma~\ref{extended martingale problem}, we
have for all $T > 0$ and test functions $\varphi\in\bigcup_{\lambda>0}
\mathcal{C}_{\lambda}$ (resp., $\bigcup_{\lambda>0}\mathcal
{C}_{-\lambda
}$) that
%
\begin{eqnarray}
\label{MP7} \langle\mu_t, S_{T-t}\varphi\rangle&=&
\langle\mu_0,S_T\varphi \rangle + \int
_{[0,t]\times\R}S_{T-s}\varphi(x) M(ds,dx),
\nonumber
\\[-8pt]
\\[-8pt]
\nonumber
\langle\nu_t, S_{T-t}\varphi\rangle&=& \langle
\nu_0,S_T\varphi \rangle + \int_{[0,t]\times\R}S_{T-s}
\varphi(x) N(ds,dx)
\end{eqnarray}
for $t\in[0,T]$,
where $M(d(s,x))$, $N(d(s,x))$ are the martingale measures from Lemma~\ref{extended martingale problem}. In particular, $(\langle\mu_t,
S_{T-t}\varphi\rangle)_{t\in[0,T]}$ and $(\langle\nu_t,
S_{T-t}\varphi
\rangle)_{t\in[0,T]} $ are martingales with covariance structure given
by \eqref{Cov2} with $f_s(x)=g_s(x)=S_{T-s}\varphi(x)$.
\end{corollary}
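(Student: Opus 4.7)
The plan is to apply the extended martingale problem of Lemma~\ref{extended martingale problem} to the time-dependent test function
\[
\phi_s(x) := S_{T-s}\varphi(x), \qquad s\in[0,T],\ x\in\R,
\]
and to exploit the fundamental cancellation coming from the backward heat equation: for $s<T$,
\[
\tfrac{\partial}{\partial s}\phi_s + \tfrac12 \Delta \phi_s \;=\; 0.
\]
Inserted into formula \eqref{MP2}, this kills the drift integral identically and leaves exactly the stochastic integral against $M(d(s,x))$ on the right-hand side, which is precisely the claimed Green function representation.

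There is one technical issue: Lemma~\ref{extended martingale problem} requires the test function to lie in $\calC^{(1,2)}_{T,\rap}$ (resp.\ $\calC^{(1,2)}_{T,\tem}$), whereas for $\varphi\in\calC_\lambda$ with $\lambda>0$ the function $\phi_s=S_{T-s}\varphi$ is only in $\calC^{(1,2)}_{T-\delta,\lambda'}$ for any $\delta>0$ and some $\lambda'\le\lambda$, by the smoothing action of the heat kernel (in particular, Corollary~\ref{cor estimates}(a) applied to $S_{\delta/2}\varphi\in\calC^{(2)}_{\lambda'}$ together with the semigroup property). I would first verify that the martingale identity \eqref{MP2} extends to this slightly larger class of test functions. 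This is a routine approximation: by the assumption $\Lambda_t\in\calM_\tem$ a.s., together with the covariance identity \eqref{Cov1} applied to compactly supported $\phi$ and the first moment bounds for $\mu_t,\nu_t$ against tempered functions (which follow directly from the martingale formulation evaluated on a sequence $\phi_n\in\calC^{(2)}_\rap$ approximating $\phi_\lambda$), one can approximate $\phi_s$ uniformly in $s\in[0,T-\delta]$ by rapidly decreasing test functions and pass to the limit in the identity \eqref{MP2} and the covariance structure \eqref{Cov2}. The analogous argument works in the rapidly decreasing case, where $\varphi\in\calC_{-\lambda}$ and the relevant weight is $\phi_{-\lambda}$.

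Once this extension is in hand, plugging $\phi_s = S_{T-s}\varphi$ into \eqref{MP2} yields, for every $t\in[0,T-\delta]$,
\[
\langle \mu_t,S_{T-t}\varphi\rangle \;=\; \langle \mu_0,S_T\varphi\rangle \;+\; \int_{[0,t]\times\R} S_{T-s}\varphi(x)\,M(ds,dx),
\]
and similarly for $\nu$ with $N$ in place of $M$. I would then let $\delta\downarrow 0$ to reach $t=T$. The left-hand side is continuous in $t$ on $[0,T]$ by the $\calM_\tem$-path-continuity of $(\mu_t)$ together with the continuity of $t\mapsto S_{T-t}\varphi$ in $\calC_\lambda$. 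For the right-hand side, the uniform bound $|S_{T-s}\varphi(x)|\le C|\varphi|_\lambda\,\phi_\lambda(x)$ from Lemma~\ref{lemma estimates}(a) combined with the covariance identity \eqref{Cov2} gives
\[
\E\Big[\Big(\int_{[0,T]\times\R}S_{T-s}\varphi(x)\,M(ds,dx)\Big)^{\!2}\Big] \;\le\; C\,|\varphi|_\lambda^2\,\E[\langle\Lambda_T,\phi_{2\lambda}\rangle] \;<\;\infty,
\]
the finiteness being a consequence of $\Lambda_T\in\calM_\tem$ together with the same first-moment argument used above. This $L^2$-control allows the stochastic integral to be extended continuously up to $t=T$, completing \eqref{MP7}.

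The final assertion, that the processes in \eqref{MP7} are martingales with the announced covariance structure, is then automatic from the general theory of stochastic integrals against martingale measures: the integrands $S_{T-s}\varphi(x)$ are deterministic, hence predictable, and square-integrable against the dominating measure $\Lambda$ in the sense just established, so the integrals are martingales whose quadratic (co-)variations are obtained by substituting $f_s=g_s=S_{T-s}\varphi$ into \eqref{Cov2}. The main obstacle, in my view, is really only the verification that the extended martingale problem admits the enlarged class of test functions, since the rest is pure computation once the cancellation $(\partial_s+\tfrac12\Delta)\phi_s\equiv 0$ is recognized.
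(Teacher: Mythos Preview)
Your proposal is correct and rests on the same key idea as the paper: plug $\phi_s=S_{T-s}\varphi$ into the extended martingale problem of Lemma~\ref{extended martingale problem} and use the backward heat equation $(\partial_s+\tfrac12\Delta)S_{T-s}\varphi=0$ to kill the drift. The paper's route to handling the regularity issue is slightly more direct, though. Rather than extending Lemma~\ref{extended martingale problem} to the class $\calC^{(1,2)}_{T-\delta,\lambda}$ and then sending $\delta\downarrow0$, the paper first restricts to $\varphi\in\calC^{(2)}_\rap$ (resp.\ $\calC^{(2)}_\tem$), for which Corollary~\ref{cor estimates}(a) gives $S_{T-\cdot}\varphi\in\calC^{(1,2)}_{T,\rap}$ (resp.\ $\calC^{(1,2)}_{T,\tem}$) on the \emph{full} interval $[0,T]$, so \eqref{MP7} follows immediately with no $\delta$-truncation; only then does it approximate a general $\varphi\in\bigcup_{\lambda>0}\calC_\lambda$ by such smooth test functions and pass to the limit via monotone/dominated convergence. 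Your scheme works too, but it effectively performs two approximations (enlarging the test-function class for \eqref{MP2}, then $\delta\to0$) where one suffices.
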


\begin{pf}
For $\varphi\in\calC_\rap^{(2)}$ (resp., $\calC_\tem^{(2)}$), this
follows at once from
the extended martingale problem of Lemma~\ref{extended martingale
problem} by putting $\phi_t:=\psi_t:=S_{T-t}\varphi$ for $t\in[0,T]$,
observing that the latter function is in $\calC_{T,\rap}^{(1,2)}$ for
$\varphi\in\calC_\rap^{(2)}$ (resp., in $\calC_{T,\tem}^{(1,2)}$ for
$\varphi\in\calC_\tem^{(2)}$) by Corollary~\ref{cor estimates},
and that $(\frac{1}{2}\Delta+\frac{\partial}{\partial
_s})S_{T-s}\varphi
\equiv0$. In order to extend \eqref{MP7} to more general $\varphi$, one
uses simple approximation arguments involving monotone, respectively
dominated, convergence.
\end{pf}

\begin{prop}\label{prop MPinf}
Fix $\varrho\in(-1,1)$ and $(\mu_0,\nu_0)\in\calM_\tem^2$
(resp., $\calM
_\rap^2$). Let $(\mu_t,\nu_t)_{t\ge0}\in\calC_{[0,\infty)}(\calM
_{\tem
}^2)$ [resp., $\calC_{[0,\infty)}(\calM_{\rap}^2)$] be any solution to
the martingale problem $(\mathbf{MP})_{\mu_0,\nu_0}^\rho$. Then the
process $(\Lambda_t)_{t\ge
0}\in
\calC_{[0,\infty)}(\calM_\tem)$ [resp., $\calC_{[0,\infty)}(\calM
_\rap
)$] from Definition~\ref{defn:MP}, governing the correlations of the
martingales as in \eqref{Cov1}, is increasing with $\Lambda_0=0$ and
satisfies condition \eqref{finiteness Lambda 1}. Moreover, for all
$T>0$ and (nonnegative) test functions $0\le\phi,\psi\in\calC
_{T,\rap
}^{(1,2)}$ (resp., $\in\calC_{T,\tem}^{(1,2)}$), the process
%
\begin{eqnarray}
\label{MP8} && F(\mu_t, \nu_t,\phi_t,
\psi_t) - F(\mu_0,\nu_0,\phi_0,
\psi_0)\nonumber
\\
& &\qquad{}- \int_0^t F(\mu_s,
\nu_s,\phi_s,\psi_s) \biggl\langle\biggl
\langle\mu_s, \nu_s, \biggl(\frac{1}{2}\Delta+
\frac{\partial}{\partial s} \biggr)\phi_s, \biggl(\frac{1}{2}\Delta+
\frac{\partial}{\partial s} \biggr)\psi _s\biggr\rangle\biggr
\rangle_\rho\, ds
\\
& &\qquad{}- 4\bigl(1-\rho^2\bigr)\int_{[0,t]\times\R} F(
\mu_s,\nu_s,\phi_s,\psi _s)
\phi _s(x)\psi_s(x) \Lambda(ds,dx),\nonumber
\end{eqnarray}
$ t\in[0,T]$, is a martingale
with quadratic variation given by
%
\begin{equation}
\label{qv2} 8\bigl(1-\rho^2\bigr)\int_{[0,t]\times\R}
F(\mu_s,\nu_s,\phi_s,\psi_s)^2
\phi _s(x)\psi_s(x) \Lambda(ds,dx).
\end{equation}
\end{prop}
\begin{pf}
In view of \eqref{Cov1}, it is clear that $\Lambda$ is increasing and
$\Lambda_0=0$. Moreover, since the martingales in Definition~\ref
{defn:MP} are assumed square integrable, we have $\E_{\mu_0,\nu
_0}[\langle\Lambda_t,\phi^2\rangle]=\E_{\mu_0,\nu_0}[M_t(\phi
)^2]<\infty
$ for all test functions $\phi\in\calC_{\rap}^{(2)}$ (resp., $\calC
_{\tem
}^{(2)}$). Thus \eqref{finiteness Lambda 1} is satisfied.

The proof of \eqref{MP8} is basically a straightforward application of
It\^{o}'s formula; cf. the proof of Proposition~5 in \cite{EF04}. We
sketch it here for the convenience of the reader and to make clear that
the arguments in \cite{EF04} do not rely on properties of the finite
rate model, but actually work for any solution to the martingale
problem $(\mathbf{MP})_{\mu_0,\nu_0}^\rho$.
Define
\begin{eqnarray*}
Y_t:=\langle\mu_t+\nu_t,\phi_t+
\psi_t\rangle& =&\langle\mu_0+\nu _0,\phi
_0+\psi_0\rangle
\\
&&{} +\int_0^t\biggl\langle\mu_s+
\nu_s, \biggl(\frac{1}{2}\Delta+\frac
{\partial
}{\partial s} \biggr) (
\phi_s+\psi_s)\biggr\rangle \,ds
\\
&&{} +\int_{[0,t]\times\R}\bigl(\phi_s(x)+
\psi_s(x)\bigr) (M+N) (ds,dx),
\\
Z_t:=\langle\mu_t-\nu_t,\phi_t-
\psi_t\rangle &=&\langle\mu_0-\nu _0,\phi
_0-\psi_0\rangle
\\
&&{} +\int_0^t\biggl\langle\mu_s-
\nu_s, \biggl(\frac{1}{2}\Delta+\frac
{\partial
}{\partial s} \biggr) (
\phi_s-\psi_s)\biggr\rangle \,ds
\\
&&{} +\int_{[0,t]\times\R}\bigl(\phi_s(x)-
\psi_s(x)\bigr) (M-N) (ds,dx),
\end{eqnarray*}
where $M$ and $N$ are the martingale measures from Lemma~\ref{extended
martingale problem}. We observe that $Y$ and $Z$ are continuous
real-valued semimartingales with covariance structure easily calculated as
\begin{eqnarray*}
[Y,Y]_t&=&2(1+\rho)\int_{[0,t]\times\R} \bigl(
\phi_s(x)+\psi_s(x)\bigr)^2 \Lambda(ds,dx),
\\
{}[Z,Z]_t&=&2(1-\rho)\int_{[0,t]\times\R} \bigl(
\phi_s(x)-\psi_s(x)\bigr)^2 \Lambda(ds,dx),
\\
{}[Y,Z]_t&=&0.
\end{eqnarray*}
Now define $H(y,z):=\exp(-\sqrt{1-\rho} y+i\sqrt{1+\rho} z)$, apply
It\^{o}'s formula to the process $(H(Y_t,Z_t))_{t\ge0}$ and use the
trivial identity $(\phi+\psi)^2-(\phi-\psi)^2=4\phi\psi$ to
obtain by a
straightforward calculation that
\begin{eqnarray*}
&&F(\mu_t,\nu_t,\phi_t,\psi_t)\\
&&\qquad=F(
\mu_0,\nu_0,\phi_0,\psi_0)
\\
&&\qquad\quad{}+\int_0^tF(\mu_s,
\nu_s,\phi_s,\psi_s)\cdot\biggl\langle
\biggl\langle\mu _s, \nu_s, \biggl(\frac{\Delta}{2}+
\frac{\partial}{\partial s} \biggr)\phi _s, \biggl(\frac{\Delta}{2}+
\frac{\partial}{\partial s} \biggr)\psi _s\biggr\rangle \biggr
\rangle_\rho \,ds
\\
&&\qquad\quad{}+4\bigl(1-\rho^2\bigr)\int_0^t
\int_\R F(u_s,v_s,
\phi_s,\psi_s) \phi _s(x)\psi
_s(x) \Lambda(ds,dx)
\\
&&\qquad\quad{}+\int_{[0,t]\times\R}F(u_s,v_s,
\phi_s,\psi_s) \\
&&\hspace*{44pt}\qquad\quad{}\times\bigl(-\sqrt{1-\rho } \bigl(\phi
_s(x)+\psi_s(x)\bigr)
 \\
 &&\hspace*{93pt}{}+i\sqrt{1+\rho} \bigl(\phi_s(x)-\psi_s(x)\bigr)
\bigr)M(ds,dx)
\\
&&\qquad\quad{}+\int_{[0,t]\times\R}F(u_s,v_s,
\phi_s,\psi_s)\\
&&\hspace*{44pt}\qquad\quad{}\times \bigl(-\sqrt{1-\rho } \bigl(\phi
_s(x)+\psi_s(x)\bigr)
 \\
 &&\hspace*{90pt}{}-i\sqrt{1+\rho} \bigl(\phi_s(x)-\psi_s(x)\bigr)
\bigr)N(ds,dx).
\end{eqnarray*}
This gives \eqref{MP8}, and computing the quadratic variation of the
martingale term in the above display, we obtain \eqref{qv2}.
\end{pf}

\begin{corollary}\label{cor MPinf}
Fix $\rho\in(-1,1)$ and $(\mu_0,\nu_0)\in\calM_\tem^2$ (resp.,
$\calM
_\rap^2$).
Then any solution $(\mu_t,\nu_t)_{t\ge0}\in\calC_{[0,\infty
)}(\calM
_{\tem}^2)$ [resp., $\calC_{[0,\infty)}(\calM_{\rap}^2)$] to the
martingale problem $(\mathbf{MP})_{\mu_0,\nu_0}^\rho$ is also
a solution
to the martingale problem $(\mathbf{MP'})_{\mu_0,\nu_0}^\rho$.
\end{corollary}

\subsection{Some facts on Brownian motion and its local time}\label{appendix2}

In this subsection, we recall some of the
standard facts (and their variations) on Brownian motion in a formulation
adapted to our needs.
In the following we will denote for any suitable process $(X_t)_{t \geq
0}$ its local time in $x$ by $L_t^x := L_t^{x,X}$.

\begin{lemma}\label{le:localtime} If $(B_t)_{t \geq0}$ is a Brownian
motion started at $x \in\R$ with
local time $(L_t^0)_{t \geq0}$ in~$0$, then
\[
\bigl(L_t^0\bigr)_{t \geq0} \stackrel{d} {=}
\bigl(M_t^+\bigr)_{t \geq0},
\]
where $(M_t)_{t \geq0}$ is the maximum process of a Brownian motion
started at $-|x|$.
\end{lemma}

\begin{pf} We adapt the proof of Theorem~7.38 in~\cite{MP10}.
By Tanaka's formula~\cite{MP10}, Theorem~7.33, we find that
\[
|B_t| - |x| = \int_0^t
\operatorname{sign}(B_s) \,\dd B_s + L^0_t.
\]
By~\cite{MP10}, Lemma~7.40, the stochastic integral is equal in
distribution to
a standard Brownian motion,
so if we set
\[
W_t = - \biggl(|x| + \int_0^t
\operatorname{sign}(B_s) \,\dd B_s \biggr) ,
\]
then $W$ is a linear Brownian motion started at $-|x|$, and
we have that
%
\begin{equation}
\label{eq:3101-1} |B_t| = - W_t + L_t^0.
\end{equation}
Let
$(M_t)_{t \geq0}$ denote the maximum process of $(W_t)_{t \geq0}$.
We want to show that for all $t \geq0$, we have $M_t^+ = L_t^0$.
It follows immediately
from~(\ref{eq:3101-1}) that
$W_s \leq L_s^0 \leq L_t^0$ for all $s \leq t$, so that by taking
the maximum we obtain $M_t^+=0\vee M_t \leq L_t^0$.

Now suppose there exists a time $t$ such that $M_t^+ < L^0_t$.
Let $u := \inf\{ r \le t \dvtx L^0_r = L^0_t \}$.
Since $L^0$ only increases on the set $\{ s \dvtx B_s = 0\}$, by continuity
and since $L^0_t > 0$, we must have $B_u = 0$.
In particular, from \eqref{eq:3101-1} we get
$W_u = L^0_u$ with $u \le t$. Thus, we can deduce that
\[
M_u \geq W_u = L^0_u =
L^0_t > M_t,
\]
which yields a contradiction since $u \le t$ and $M$
is obviously increasing. Hence, $M^+_t = L^0_t$ as claimed.
\end{pf}

\begin{lemma}\label{le:local_estimate}
Let $(B_t)_{t\ge0}$ be a Brownian motion started at $z \in\R$ with
local time $(L_t^0)_{t\ge0}$ in~$0$. Then for all $\alpha> 0$ and
$t\ge1$,
\[
\p_z \bigl\{ L_t^0 \leq\alpha\log t \bigr
\} \leq\sqrt{\frac{2}{\pi}} \frac{
\alpha\log t + |z|}{t^{{1}/{2}}}.
\]
\end{lemma}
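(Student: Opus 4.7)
The plan is to reduce the statement to a computation about the maximum of a standard Brownian motion, via the L\'evy-type identity established in Lemma~\ref{le:local_time}, and then to apply the reflection principle followed by a crude pointwise bound on the Gaussian density.

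First I would invoke Lemma~\ref{le:local_time} to get the distributional identity $(L_t^0)_{t\ge0}\stackrel{d}{=}(M_t^+)_{t\ge0}$, where $(M_t)_{t\ge0}$ is the maximum process of a Brownian motion started at $-|z|$. Writing such a Brownian motion as $-|z|+W_s$ with $(W_s)_{s\ge0}$ a standard Brownian motion started at $0$, this identity gives
\[
\p_z\bigl\{L_t^0\le\alpha\log t\bigr\}
=\p_0\Bigl\{\sup_{s\le t}W_s-|z|\le\alpha\log t\Bigr\}
=\p_0\Bigl\{\sup_{s\le t}W_s\le\alpha\log t+|z|\Bigr\},
\]
where in the first step I used that $\alpha\log t\ge0$ for $t\ge 1$, so that $M_t^+\le\alpha\log t$ is the same event as $M_t\le\alpha\log t$.

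Next I would apply the reflection principle, which yields, for any $a\ge0$,
\[
\p_0\Bigl\{\sup_{s\le t}W_s\le a\Bigr\}=1-2\,\p_0\{W_t\ge a\}=\p_0\{|W_t|\le a\}=\int_{-a/\sqrt{t}}^{a/\sqrt{t}}\frac{1}{\sqrt{2\pi}}\,e^{-u^2/2}\,du.
\]
Taking $a=\alpha\log t+|z|$ and bounding $e^{-u^2/2}\le1$ in the integrand immediately gives
\[
\p_z\bigl\{L_t^0\le\alpha\log t\bigr\}\le\frac{2}{\sqrt{2\pi}}\cdot\frac{\alpha\log t+|z|}{\sqrt{t}}=\sqrt{\frac{2}{\pi}}\,\frac{\alpha\log t+|z|}{t^{1/2}},
\]
which is the desired bound.

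There is no real obstacle here; the only subtlety is ensuring that the quantity $\alpha\log t+|z|$ is nonnegative (so that the reflection-principle formula is applicable without sign issues), which is guaranteed by the assumptions $t\ge 1$, $\alpha>0$ and $z\in\R$.
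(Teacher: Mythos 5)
Your proof is correct and follows essentially the same route as the paper: apply Lemma~\ref{le:local_time} to replace the local time by the maximum process, use the reflection principle to identify $\p_0\{M_t\le a\}$ with $\p_0\{|W_t|\le a\}$, and bound the Gaussian density pointwise to obtain $\sqrt{2/\pi}\,a\,t^{-1/2}$. No issues.
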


\begin{pf} Using Lemma~\ref{le:localtime}, we find that if
$(M_t)_{t\ge0}$
denotes the maximum process of a Brownian motion started at $-|z|$, we
can estimate
\begin{eqnarray*}
\p_z \bigl\{ L_t^0 \leq\alpha\log t \bigr
\} & =& \p_{-|z|} \bigl\{ M_t^+ \leq\alpha\log t \bigr\} =
\p_0 \bigl\{ M_t \leq \alpha \log t + |z| \bigr\}
\\
& = &\p_0\bigl \{ |B_t| \leq\alpha\log t + |z| \bigr\} \leq\sqrt{
\frac
{2}{\pi}} \frac{\alpha\log t+ |z|}{t^{{1}/{2}}} ,
\end{eqnarray*}
where we used the reflection principle; see, for example,~\cite{MP10}, Theorem~2.21, in the second-to-last step.
\end{pf}

\begin{corollary}\label{le:asymp_collision} Suppose that $(B_t^\ssup
{1})_{t\geq0}$ and $(B_t^\ssup{2})_{t \geq0}$ are independent
Brownian motions
started at $x < y$, respectively, and denote their collision local time
as $(L_t^{1,2})_{t \geq0}$.
Then for all $\alpha> 0$ and $t\ge1$,
\[
\p_{x,y} \bigl\{ L_t^{1,2} \leq\alpha\log t \bigr
\} \leq\frac{1}{\sqrt{\pi}} \frac{2 \alpha\log t + y-x }{t^{{1}/{2}}}.
\]
\end{corollary}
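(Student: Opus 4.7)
The strategy is to reduce the collision local time of the two independent Brownian motions to the local time at $0$ of a single Brownian motion, so that Lemma~\ref{le:local_estimate} applies directly.

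The plan is as follows. First, consider the difference process $W_s := B_s^{(2)} - B_s^{(1)}$, which (since $B^{(1)}$ and $B^{(2)}$ are independent standard Brownian motions) is a Brownian motion with variance $2s$ started at $y-x$. The collision local time $(L_t^{1,2})_{t\ge0}$ coincides with the semimartingale local time of $W$ at $0$, up to a normalisation factor coming from the quadratic variation $\langle W\rangle_s = 2s$. Combining this with Brownian scaling ($W_s = \sqrt{2}\,\widetilde W_{s}$ in law, after an appropriate time change with $\widetilde W$ a standard Brownian motion started at $y-x$), one obtains the distributional identity
\[
L_t^{1,2} \stackrel{d}{=} \tfrac{1}{2}\,L_{2t}^{0,\widetilde W},
\]
where $\widetilde W$ is a standard Brownian motion started at $y-x$. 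This identity was already used (in the special case $x=y$) in the proof of Lemma~\ref{lemma:sep}, so I would simply quote it here.

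With this in hand, the bound follows by direct substitution:
\begin{align*}
\p_{x,y}\bigl\{L_t^{1,2}\le\alpha\log t\bigr\}
 &= \p_{y-x}\bigl\{\tfrac{1}{2}L_{2t}^{0}\le \alpha\log t\bigr\}
 = \p_{y-x}\bigl\{L_{2t}^{0}\le 2\alpha\log t\bigr\}.
\end{align*}
Applying Lemma~\ref{le:local_estimate} to the standard Brownian motion $\widetilde W$ started at $z=y-x\ge 0$, at time $2t\ge 2\ge 1$, with the threshold $2\alpha\log t$ in place of $\alpha\log(2t)$ (the proof of Lemma~\ref{le:local_estimate} in fact yields $\p_z\{L_t^0\le c\}\le \sqrt{2/\pi}\,(c+|z|)/\sqrt{t}$ for any $c>0$), one obtains
\[
\p_{y-x}\bigl\{L_{2t}^{0}\le 2\alpha\log t\bigr\}
\le \sqrt{\tfrac{2}{\pi}}\,\frac{2\alpha\log t+(y-x)}{\sqrt{2t}}
= \frac{1}{\sqrt{\pi}}\,\frac{2\alpha\log t+y-x}{t^{1/2}},
\]
which is exactly the claimed estimate.

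The only non-routine step is the identity $L_t^{1,2}\stackrel{d}{=}\tfrac{1}{2}L_{2t}^{0,\widetilde W}$, which is standard folklore but deserves a short justification via Tanaka's formula applied to $|W_s|$ together with the scaling $W_s = \sqrt{2}\,\widetilde W_s$ in law (note the subtlety that the normalisation of the semimartingale local time carries the factor $\tfrac{1}{2}$). Once this identity is in place, everything else is a one-line calculation invoking the already-established Lemma~\ref{le:local_estimate}.
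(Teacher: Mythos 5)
Your proof is correct and follows essentially the same route as the paper: reduce the collision local time to the local time at $0$ of a single standard Brownian motion via the difference process and Brownian scaling, then invoke Lemma~\ref{le:local_estimate} (whose proof indeed gives $\p_z\{L_t^0\le c\}\le\sqrt{2/\pi}\,(c+|z|)/\sqrt{t}$ for arbitrary $c>0$). Your scaling identity $L_t^{1,2}\stackrel{d}{=}\tfrac12 L_{2t}^{0,\widetilde W}$ with $\widetilde W$ started at $y-x$ is equivalent to the paper's $L_t^{1,2}\stackrel{d}{=}\tfrac{1}{\sqrt2}L_t^{0,B}$ with $B$ started at $(y-x)/\sqrt2$, and both yield the stated constant.
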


\begin{pf} This follows immediately from Lemma~\ref{le:local_estimate}.
Note that $W_t := B_t^\ssup{2} - B_t^\ssup{1}$, $t \geq0$ is by
definition a Brownian motion
(with quadratic variation $2t$ and started at $y-x$), and thus $B_t :=
W_{t/2}- (y-x)$, $t\geq0$ is a standard
Brownian motion. Moreover
$L^{1,2}_t = L_t^{0,B^\ssup{2} - B^\ssup{1}} = L_t^{0,W}$.
Now observe that
\begin{eqnarray*}
L_t^{0,W} & =& \lim_{\eps\da0 }
\frac{1}{2 \eps} \int_0^t \1_{\{
|W_s| \leq\eps\}}
\,ds =\lim_{\eps\da0} \frac{1}{2\eps} \int_0^t
\1_{\{ |B_{2s}+y-x|
\leq
\eps\}} \,ds
\\
& \stackrel{d} {=} &\lim_{\eps\da0} \frac{1}{2\eps}\int
_0^t \1_{\{
|\sqrt{2} B_{s}+y-x| \leq\eps\}} \,ds =\frac{1}{\sqrt{2}}
L_t^{{(x-y)}/{\sqrt{2}},B}.
\end{eqnarray*}
Hence by Lemma~\ref{le:local_estimate},
\begin{eqnarray*}
\p_{x,y} \bigl\{ L_t^{1,2} \leq\alpha\log t \bigr
\} & =& \p_0 \bigl\{ L_t^{{(x-y)}/{\sqrt{2}},B} \leq\sqrt{2}
\alpha \log t \bigr\}\\
& =& \p_{{(y-x)}/{\sqrt{2}}} \bigl\{ L_t^{0,B}
\leq\sqrt{2} \alpha \log t \bigr\}
\\
& \leq&\sqrt{ \frac{2}{\pi}} \frac{ \sqrt{2} \alpha\log t +
({1}/{\sqrt{2}})(y - x)}{
t^{{1}/{2}}} ,
\end{eqnarray*}
which proves the corollary.
\end{pf}

The following is a slightly generalized version of Lemma~2 in \cite
{AT00}. It follows easily from the occupation times formula for
Brownian local time.

\begin{lemma}\label{lem:occ_times}
Let $B^\ssup{1}$, $B^\ssup{2}$ be independent Brownian motions defined
on $(\Omega,\calF,\p)$. Then for every $h\dvtx\R\times\R^+\times
\Omega\to\R$
measurable and bounded or nonnegative, we have
%
\begin{equation}\quad
\label{eq:occ_times}\int_0^t h\bigl(B^\ssup{2}_s-B^\ssup
{1}_s,s,\cdot\bigr) \,ds=\int_\R\int
_0^t h(z,s,\cdot) \,dL_s^{z,B^\ssup
{2}-B^\ssup{1}}\,dz,\qquad
\p\mbox{-a.s.}
\end{equation}
\end{lemma}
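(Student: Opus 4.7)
The plan is to reduce~\eqref{eq:occ_times} to the classical occupation-times formula for the continuous semimartingale $W:=B^\ssup{2}-B^\ssup{1}$ via a functional monotone class argument. First I would recall the classical formula: with the normalization of local time used throughout the paper (density of the occupation measure, consistent with $L_t^{0,B}\overset{d}{=}|B_t|$), for every bounded Borel $f:\R\to\R$ we have almost surely, for all $t\ge 0$,
\[
\int_0^t f(W_s)\,ds \;=\; \int_\R f(z)\,L_t^{z,W}\,dz.
\]
Taking differences yields the same identity on any interval $[a,b]$. Standard semimartingale theory also provides a jointly measurable (in fact continuous) version of $(z,s,\omega)\mapsto L_s^{z,W(\omega)}$, so the iterated integral on the right of~\eqref{eq:occ_times} is well-defined $\p$-a.s.\ and amenable to Fubini.

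Next I would verify~\eqref{eq:occ_times} for the $\pi$-system of elementary integrands
\[
h(z,s,\omega) \;=\; f(z)\,\mathbf{1}_{(a,b]}(s)\,\mathbf{1}_A(\omega), \qquad A\in\calF,\ 0\le a<b,\ f\ \text{bounded Borel}.
\]
For such $h$, the left-hand side of~\eqref{eq:occ_times} equals $\mathbf{1}_A\int_{a\wedge t}^{b\wedge t} f(W_s)\,ds$ and the right-hand side equals $\mathbf{1}_A\int_\R f(z)\,[L_{b\wedge t}^{z,W}-L_{a\wedge t}^{z,W}]\,dz$, and these coincide by the classical formula applied at times $a\wedge t$ and $b\wedge t$.

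Finally, I would close the argument by a monotone class sweep. Let $\calH$ be the family of bounded measurable $h:\R\times\R^+\times\Omega\to\R$ for which~\eqref{eq:occ_times} holds a.s. Then $\calH$ is a vector space, contains the constants, and is stable under bounded monotone limits: indeed, $s\mapsto L_s^{z,W}$ is non-decreasing so one may apply monotone/dominated convergence to each side separately and invoke Fubini for the right-hand side using the jointly measurable version of the local time. Since $\calH$ contains the $\pi$-system above, which generates $\calB(\R)\otimes\calB(\R^+)\otimes\calF$, the functional monotone class theorem delivers~\eqref{eq:occ_times} for all bounded measurable $h$. For nonnegative $h$, truncate to $h\wedge n$ and pass to the limit by monotone convergence on both sides.

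The only real subtlety is the joint measurability (in $(z,s,\omega)$) of the local time needed both to make sense of the right-hand side of~\eqref{eq:occ_times} and to justify the Fubini-type interchanges during the monotone class step. This is a classical fact for continuous semimartingales, which I would invoke rather than reprove; everything else is a bookkeeping exercise built on the standard occupation-times identity.
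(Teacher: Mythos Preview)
Your proposal is correct and follows essentially the same approach as the paper: verify~\eqref{eq:occ_times} for elementary product integrands $f(z)\mathbf{1}_{(a,b]}(s)\mathbf{1}_A(\omega)$ via the classical occupation-times formula for $W=B^\ssup{2}-B^\ssup{1}$, then extend by a functional monotone class argument. If anything, you are slightly more explicit than the paper in flagging the joint measurability of $(z,s,\omega)\mapsto L_s^{z,W}$ and in handling the nonnegative unbounded case by truncation; the paper's proof simply cites the monotone class theorem from Revuz--Yor and does not spell out these points.
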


In \cite{AT00}, Lemma~2, this is stated for functions of the form
$h(z,s,\omega)=f(z)Y_s(\omega)$, where it is assumed that $f$ is
continuous and $(Y_s)$ is predictable. Neither of the two assumptions
is really needed. Also note that the factor $2$ in the statement of
Lemma~2 in \cite{AT00} seems to be incorrect.
\begin{pf*}{Proof of Lemma \ref{lem:occ_times}}
Let $X_s:=B^\ssup{2}_s-B^\ssup{1}_s$. For $h(z,s,\omega)= \break f(z)\1
_{(a,b]}(s)g(\omega)$, with $f$ and $g$ measurable bounded and $0\le
a<b<\infty$, \eqref{eq:occ_times} holds by the occupation times
formula~\cite{RevuzYor}, Corollary~VI.1.6,
since for $\p$-almost all $\omega\in\Omega$,
\begin{eqnarray*}
\int_0^t h\bigl(X_s(\omega),s,
\omega\bigr) \,ds &=& \biggl(\int_{0}^{b\wedge t} f
\bigl(X_s(\omega)\bigr) \,ds - \int_0^{a\wedge
t}f
\bigl(X_s(\omega)\bigr) \,ds \biggr)g(\omega)
\\
&=& \biggl(\int_\R f(z)L_{b\wedge t}^{z,X}(
\omega) \,dz - \int_\R f(z)L_{a\wedge t}^{z,X}(
\omega) \,dz \biggr)g(\omega)
\\
&=&\int_\R f(z)\int_{a\wedge t}^{b\wedge t}\,dL_s^{z,X}(
\omega) \,dz g(\omega )
\\
&=&\int_\R\int_0^t
h(z,s,\omega) \,dL_s^{z,X}(\omega)\,dz.
\end{eqnarray*}

Let $\calC$ denote
the class of all functions $h$ of the above form. Clearly, $\mathcal
{C}$ is closed under multiplication and generates the product $\sigma
$-algebra $\calB_\R\otimes\calB_{\R^+}\otimes\calF$ on $\R
\times\R
^+\times\Omega$.
Moreover, let $\calH$ denote the space of all bounded measurable
functions $h\dvtx\R\times\R^+\times\Omega\to\R$ for which
\eqref
{eq:occ_times} holds. Since \eqref{eq:occ_times} is stable under linear
combinations and under monotone convergence, $\calH$ is a monotone
vector space of bounded measurable functions which contains $\calC$.
Hence by the monotone class theorem~\cite{RevuzYor}, Theorem~0.2.2,
$\calH$ contains all bounded measurable functions $h\dvtx\R\times\R
^+\times
\Omega\to\R$, which proves the assertion.
\end{pf*}
\end{appendix}

\section*{Acknowledgments} We would like to thank the referees for
reading our paper very carefully and suggesting several improvements
to the presentation.

\bibliographystyle{imsart-number}

%
%





\printaddresses
\end{document}